\def\Spnr{\mathrm{Sp}(2d,\R)}
\def\Gltwonr{\mathrm{GL}(2d,\R)}
\newcommand{\tfa}{time-frequency analysis}
\newcommand{\ft}{Fourier transform}
\newcommand{\stft}{short-time Fourier transform}
\newcommand{\psdo}{pseudodifferential operator}
\newcommand{\tpsdo}{$\tau$-pseudo\-differential operator}
\newcommand{\tf}{time-frequency}
\newcommand{\opt }{\mathrm{Op}_{\tau}}
\newcommand{\tfs}{time-frequency shift}
\newtheorem{tm}{Theorem}[section]
\newtheorem{lemma}[tm]{Lemma}
\newtheorem{prop}[tm]{Proposition}
\newtheorem{theorem}{Theorem}[section]
\newtheorem{corollary}[theorem]{Corollary}
\newtheorem{definition}[theorem]{Definition}
\newtheorem{example}[theorem]{Example}
\newtheorem{proposition}[theorem]{Proposition}
\newtheorem{remark}[theorem]{Remark}
\newtheorem*{thm*}{Theorem}
\newtheorem*{cor*}{Corollary}
\newcommand{\beqa}{\begin{eqnarray*}}
	\newcommand{\eeqa}{\end{eqnarray*}}
\newcommand{\field}[1]{\mathbb{#1}}
\newcommand{\bR}{\field{R}}
\newcommand{\bN}{\field{N}}
\newcommand{\bZ}{\field{Z}}
\def\G{\mathcal{G}}
\def\la{\lambda}
\def\cF{\mathcal{F}}
\def\cS{\mathcal{S}}
\def\cB{\mathcal{B}}
\def\cU{\mathcal{U}}
\def\cA{\mathcal{A}}
\def\cA{\mathcal{A}}
\def\rd{\bR^d}
\def\rdd{{\bR^{2d}}}
\def\zdd{{\bZ^{2d}}}
\def\intrdd{\int_{\rdd}}
\def\R{\right)}
\def\<{\left<}
\def\>{\right>}
\def\inv{^{-1}}
\def\mv1{M_v^1}
\def\Lmpq{L_m^{p,q}}
\def\Mmpq{M_m^{p,q}}
\def\phas{(x,\omega )}
\def\o{\omega}
\def\R{\mathbb{R}}
\def\Ren{\mathbb{R}^d}
\def\Renn{\mathbb{R}^{2d}}
\def\sch{\mathcal{S}}
\def\Fur{\mathcal{F}}
\def\f{\varphi}
\def\Sn2{S_{2}(L^{2}(\Ren))}
\def\S1{S_{1}(L^{2}(\Ren))}
\def\sig00{\sigma_{0,0}}
\def\la{\langle}
\def\ra{\rangle}
\newcommand{\A}{\mathcal{A}}
\begin{document}
	\begin{abstract} In this paper we focus on the almost-diagonalization properties of $\tau$-pseudodifferential operators using techniques from \tfa.
		Our function spaces are modulation spaces and the special class of Wiener amalgam spaces arising by considering the action of the Fourier transform of modulation spaces.
		Such spaces are nowadays called modulation spaces as well.
		A particular example is provided by the Sj\"ostrand class, for which Gr\"ochenig \cite{Grochenig_2006_Time} exhibits the almost diagonalization of Weyl operators. We shall show that such result can be extended to any $\tau$-pseudodifferential operator, for $\tau \in [0,1]$. This is not surprising, since the mapping that goes from a Weyl symbol to a $\tau$-symbol is bounded in the Sj\"ostrand class. What is  new and quite  striking is the almost diagonalization for $\tau$-operators with symbols in weighted Wiener amalgam spaces. In this case 
		the diagonalization depends on the parameter $\tau$. In particular, we have an almost diagonalization for $\tau\in (0,1)$ whereas the cases $\tau=0$ or $\tau=1$ yield only to weaker results. As a consequence, we infer boundedness, algebra and Wiener properties for $\tau$-\psdo s on Wiener amalgam and modulation spaces. 
	\end{abstract}
	
	\title[Almost diagonalization of $\tau$-pseudodifferential operators]{Almost diagonalization of $\tau$-pseudodifferential operators with symbols in Wiener amalgam and modulation spaces}
	\author{Elena Cordero}
	\address{Dipartimento di Matematica, Universit\`a di Torino, Dipartimento di
		Matematica, via Carlo Alberto 10, 10123 Torino, Italy}
	\email{elena.cordero@unito.it}
	\thanks{}
	\author{Fabio Nicola}
	\address{Dipartimento di Scienze Matematiche, Politecnico di Torino, corso
		Duca degli Abruzzi 24, 10129 Torino, Italy}
	\email{fabio.nicola@polito.it}
	\thanks{}
	\author{S. Ivan Trapasso}
	\address{Dipartimento di Scienze Matematiche, Politecnico di Torino, corso
		Duca degli Abruzzi 24, 10129 Torino, Italy}
	\email{salvatore.trapasso@polito.it}
	\thanks{}
	
	\subjclass[2010]{47G30,35S05,42B35,81S30}
	\keywords{Time-frequency analysis, $\tau$-Wigner distribution, $\tau$-pseudodifferential operators, almost diagonalization, modulation spaces, Wiener amalgam spaces}
	\date{}
\maketitle

\section{Introduction }
 It is widely known that the techniques of Time-frequency Analysis revealed to be very fruitful when dealing with diverse problems in quite different fields, from signal processing to harmonic analysis, but also from PDE's to quantum mechanics (see, e.g., \cite{BogetalTRANS,CNStricharzJDE2008,deGossonDiasPrata2014,deGossonWigner2017,Ruzhansky2016,SugimotoWang2011,SugimotoWang2011}). Besides the manifold achievements, Gabor analysis is a fascinating discipline in itself: the study of related functional-analytic problems is an inexhausted source of challenges and it happens to cast new light on established results, eventually. As an example, Gr\"{o}chenig employed these methods in his work \cite{Grochenig_2006_Time}, almost diagonalizing  Weyl operators with symbols in the  Sj\"{o}strand class via Gabor frames. This result  has provided new insights on a number of well-known outcomes  previously obtained by Sj\"{o}strand within the realm of classical analysis \cite{Sjo94,Sjo95} and, more importantly,  has lead to far-reaching generalizations. 

In this work we extend part of the results obtained in \cite{Grochenig_2006_Time} and derive a number of relevant consequences on the boundedness of pseudodifferential operators. In particular, our study has been carried out within the unifying framework offered by $\tau$-representations and $\tau$-pseudodifferential operators. $\tau$-pseudodifferential  operators can be either defined as a quantization rule or by means of the related time-frequency representation (cf.\ \cite{BogetalTRANS,BoggiattoPseudo2010,BoggiattoCubo2010}). In the following section we give a more detailed account, but here we simply recall the latter.
For $\tau \in \lbrack 0,1]$, the (cross-)$\tau $-Wigner distribution is given by
\begin{equation}
W_{\tau }(f,g)(x,\omega )=\int_{\mathbb{R}^{d}}e^{-2\pi iy\omega }f(x+\tau y)%
\overline{g(x-(1-\tau )y)}\,dy,\quad f,g\in \mathcal{S}(\mathbb{R}^{d}),
\label{tauwig}
\end{equation}
whereas the \tpsdo\ is  defined by
\begin{equation}
\langle \opt  (a)f,g\rangle =
\langle
a,W_{\tau }(g,f)\rangle, \quad f,g\in \mathcal{S}(\mathbb{R}^{d}).
\label{tauweak}
\end{equation}
For $\tau=1/2$  we recapture the Weyl operator studied in \cite{Grochenig_2006_Time}, if  $\tau=0$ the operator is called the Kohn-Nirenberg operator $\mathrm{Op_{KN}}$, if $\tau=1$ the related operator is named operator \textquotedblleft with right symbol\textquotedblright.

For what concerns the symbol classes, we consider Banach spaces allowing
a suitable measure of the time-frequency decay of distributions, i.e.
modulation and Wiener amalgam spaces. These function spaces were introduced by Feichtinger
in the '80s (cf. \cite{Segal81.Feichtinger_1981_Banach,feichtinger1983modulation}) and revealed to be the optimal setting for a large class
of problems related to harmonic analysis and PDE's (cf. \cite{deGossonsymplectic2011,deGossonGRomero2016,Wangbook2011}). 

To fix notation, we write a point in phase space (in \tf\ space) as
$z=(x,\omega)\in\rdd$, and  the corresponding phase-space shift (\tfs )
acting on a function or distribution  as
\begin{equation}
\label{eq:kh25}
\pi (z)f(t) = e^{2\pi i \omega t} f(t-x), \, \quad t\in\rd.
\end{equation}
When considering symbol functions, we shall work with \tfs s $\pi(z,\zeta)$, with $z,\zeta\in\rdd$ (the variables are doubled).

Consider a Schwartz  function $g\in\cS(\rdd)\setminus \{0\}$. We define the modulation
space $M^{\infty,1}(\rdd)$ (or Sj\"{o}strand class) as the space of tempered distributions $\sigma\in\cS'(\rdd)$ such that
$$ \intrdd \sup_{z\in\rdd} |\la \sigma, \pi(z,\zeta)g\ra| d \zeta<\infty.
$$
For the properties of such space and its weighted versions we refer to the next section.
Here the main  focus is on symbols in the so-called Wiener amalgam
space $W(\cF L^\infty, L^1)(\rdd) =\cF M^{\infty,1}(\rdd)$, where $\cF$ is the Fourier transform. Heuristically, symbols in $W(\cF L^\infty, L^1)(\rdd)$ display locally a regularity of the type $\cF L^\infty(\rdd)$ and globally decay as functions in $L^1(\rdd)$. For instance, the $\delta$ distribution  (in $\cS'(\rdd)$) belongs to $W(\cF L^\infty, L^1)(\rdd)$.  Nowadays this class can be referred
to as modulation space as well (although in a generalized sense, as suggested
by Feichtinger in his retrospective \cite{Feich2006}), see also \cite{Pfander2013}.

The central issue of our work is the approximate diagonalization of
$\tau$-pseudo\-differential operators. This is a well-known problem, studied in several contexts of harmonic analysis: classical references are  \cite{meyer1990ondelettes,rochberg1998pseudodifferential}. See also, for the more general framework of Fourier integral operators, \cite{Labate2008,cordero2013wiener} and references therein. In somewhat heuristic terms, the choice of certain type of symbols assures that these operators preserve the time-frequency localization, since their kernel with
respect to continuous or discrete time-frequency shifts satisfies a convenient decay condition. To be precise, thanks to a covariance property for the $\tau$-Wigner distribution under the action of time-frequency
shifts, we are able to prove the following claim, which extends  \cite[Thereom 3.2]{Grochenig_2006_Time} proved for  $\tau=\tfrac{1}{2}$, i.e., for the Weyl quantization (see the subsequent Theorem \ref{teor41} for the weighted version).

First, we recall the definition of a Gabor frame. Let $\Lambda=A\zdd$,  with $A\in \mathrm{GL}(2d,\R)$, be a lattice of the time-frequency plane.
  The set  of
 time-frequency shifts $\G(\f,\Lambda)=\{\pi(\lambda)\f:\
 \lambda\in\Lambda\}$ for a  non-zero $\f\in L^2(\rd)$ (the so-called window function) is named a Gabor system. The set $\G(\f,\Lambda)$   is
 a Gabor frame, if there exist
 constants $A,B>0$ such that
 \begin{equation}\label{gaborframe}
 A\|f\|_2^2\leq\sum_{\lambda\in\Lambda}|\langle f,\pi(\lambda)\f\rangle|^2\leq B\|f\|^2_2,\qquad \forall f\in L^2(\rd).
 \end{equation}
 Our result is the following:
\begin{thm*}
	Fix a non-zero window $\varphi\in \cS(\rd)$ such that 
	 $\mathcal{G}\left(\varphi,\Lambda\right)$ is a Gabor frame  for $L^{2}\left(\mathbb{R}^{d}\right)$.
	For any $\tau\in\left[0,1\right]$, the following properties are equivalent:
	\begin{enumerate}
		\item[$(i)$] $\sigma\in M^{\infty,1}\left(\mathbb{R}^{2d}\right)$.
		\item[$(ii)$] $\sigma\in\mathcal{S}'\left(\mathbb{R}^{2d}\right)$ and there exists
		a function $H_{\tau}\in L^{1}\left(\mathbb{R}^{2d}\right)$ such
		that
		\[
		\left|\left\langle \mathrm{Op}_{\tau}\left(\sigma\right)\pi\left(z\right)\varphi,\pi\left(w\right)\varphi\right\rangle \right|\le H_{\tau}\left(w-z\right),\qquad\forall w,z\in\mathbb{R}^{2d}.
		\]
		\item[$(iii)$] $\sigma\in\mathcal{S}'\left(\mathbb{R}^{2d}\right)$ and there exists
		a sequence $h_{\tau}\in\ell^{1}\left(\Lambda\right)$ such that
		\[
		\left|\left\langle \mathrm{Op}_{\tau}\left(\sigma\right)\pi\left(\mu\right)\varphi,\pi\left(\lambda\right)\varphi\right\rangle \right|\le h_{\tau}\left(\lambda-\mu\right),\qquad\forall\lambda,\mu\in\Lambda.
		\]
	\end{enumerate}
\end{thm*}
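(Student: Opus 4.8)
The plan is to convert every matrix coefficient into a short-time Fourier transform (STFT) of the symbol $\sigma$ against one fixed Schwartz window, after which the three conditions become the continuous, respectively discrete, descriptions of membership in the Sj\"ostrand class. The engine is the covariance of the $\tau$-Wigner distribution. Writing $w=(w_1,w_2)$ and $z=(z_1,z_2)$, a change of variables in \eqref{tauwig} yields
\[
W_{\tau}(\pi(w)\varphi,\pi(z)\varphi)=e^{i\theta(w,z)}\,\pi\bigl(\mathcal{A}_{\tau}(w,z)\bigr)\Phi,\qquad \Phi:=W_{\tau}(\varphi,\varphi)\in\cS(\rdd)\setminus\{0\},
\]
with a real phase $\theta(w,z)$ and the invertible linear map $\mathcal{A}_{\tau}\colon(w,z)\mapsto(Z,\Xi)$ given by
\[
Z=\bigl((1-\tau)w_1+\tau z_1,\ \tau w_2+(1-\tau)z_2\bigr),\qquad \Xi=\bigl(w_2-z_2,\ z_1-w_1\bigr).
\]
The decisive point is that the modulation part $\Xi=J(w-z)$, $J(a_1,a_2):=(a_2,-a_1)$, depends only on $w-z$ and is independent of $\tau$, while the translation part $Z$ is a $\tau$-weighted mean of $w$ and $z$. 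Together with \eqref{tauweak} this gives the exact identity
\[
\bigl|\langle\mathrm{Op}_{\tau}(\sigma)\pi(z)\varphi,\pi(w)\varphi\rangle\bigr|=\bigl|\langle\sigma,\pi(\mathcal{A}_{\tau}(w,z))\Phi\rangle\bigr|=\bigl|V_{\Phi}\sigma(\mathcal{A}_{\tau}(w,z))\bigr|,
\]
reducing the whole theorem to a statement about the single STFT $V_{\Phi}\sigma$.

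The equivalence $(i)\Leftrightarrow(ii)$ then follows without invoking the frame hypothesis. Since $M^{\infty,1}(\rdd)$ is independent of the chosen nonzero Schwartz window, $\sigma\in M^{\infty,1}$ means precisely that $F(\Xi):=\sup_{Z}|V_{\Phi}\sigma(Z,\Xi)|$ belongs to $L^{1}(\rdd)$. For a fixed difference $w-z=a$ the point $Z$ ranges over all of $\rdd$, so $\sup_{w-z=a}|V_{\Phi}\sigma(\mathcal{A}_{\tau}(w,z))|=F(Ja)$; hence $(i)\Rightarrow(ii)$ with $H_{\tau}:=F\circ J$, which lies in $L^{1}$ because $|\det J|=1$, and the converse is obtained by taking the same supremum in $(ii)$. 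Note that $H_{\tau}$ is in fact $\tau$-independent, reflecting that the $L^{1}$-integration defining $M^{\infty,1}$ falls exactly on the $\tau$-free variable $\Xi$.

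For the discrete statements I would pass through a Gabor frame on $L^{2}(\rdd)$. The map $\mathcal{A}_{\tau}$ sends $\Lambda\times\Lambda$ onto a lattice of $\rdd\times\rdd$, and the $\tau$-version of Gr\"ochenig's lemma shows that $\{\pi(\mathcal{A}_{\tau}(\mu,\lambda))\Phi\}_{\lambda,\mu\in\Lambda}$ is a Gabor frame for $L^{2}(\rdd)$, with a dual window $\Gamma\in M^{1}(\rdd)$ since the frame operator is invertible on $M^{1}$. For $(i)\Rightarrow(iii)$ one samples the identity above: as $\Phi\in M^{1}(\rdd)$, the STFT admits Wiener-amalgam (maximal-function) control, so the lattice samples of $V_{\Phi}\sigma$ inherit the mixed $\ell^{\infty,1}$ summability of $\sigma$; since $\Xi=J(\lambda-\mu)$ indexes the $\ell^{1}$ direction and $Z$ the supremum direction, the choice $h_{\tau}(\nu):=\sup_{\lambda-\mu=\nu}|V_{\Phi}\sigma(\mathcal{A}_{\tau}(\mu,\lambda))|$ lies in $\ell^{1}(\Lambda)$. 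For $(iii)\Rightarrow(i)$ I would expand
\[
V_{\Phi}\sigma(p)=\sum_{\lambda,\mu\in\Lambda}\langle\sigma,\pi(\mathcal{A}_{\tau}(\mu,\lambda))\Gamma\rangle\,\langle\pi(\mathcal{A}_{\tau}(\mu,\lambda))\Phi,\pi(p)\Phi\rangle,
\]
transfer the decay in $(iii)$ from the $\Phi$-coefficients to the $\Gamma$-coefficients through the rapidly decaying change-of-window matrix, and finally bound $\int\sup_{Z}|V_{\Phi}\sigma(Z,\Xi)|\,d\Xi$ by a convolution of an $\ell^{1}$ sequence with an $L^{1}$ kernel via Young's inequality.

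I expect the main obstacle to be $(iii)\Rightarrow(i)$. First, one must genuinely establish that $\{\pi(\mathcal{A}_{\tau}(\mu,\lambda))\Phi\}$ is a frame for $L^{2}(\rdd)$ with well-localized dual atoms, which rests on the intertwining properties of the $\tau$-Wigner transform. Second, one has to track the asymmetric $\sup$--$L^{1}$ structure through the change-of-window estimates; the delicate point is that the image lattice $\mathcal{A}_{\tau}(\Lambda\times\Lambda)$ is \emph{not} a product lattice in the $(Z,\Xi)$-coordinates, so one must verify that the summation and integration still split correctly into the supremum variable $Z$ and the integration variable $\Xi$. The sampling step that upgrades $L^{1}$-domination to $\ell^{1}$-domination (a maximal-function argument) is the remaining technical hurdle.
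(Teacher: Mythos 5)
Your proposal is correct and follows essentially the same route as the paper: the covariance property of $W_\tau$ under time-frequency shifts yielding the exact identity $\left|\left\langle \mathrm{Op}_{\tau}(\sigma)\pi(z)\varphi,\pi(w)\varphi\right\rangle\right| = \left|V_{\Phi_\tau}\sigma\left(\mathcal{T}_\tau(w,z),J(w-z)\right)\right|$ (the paper's Lemma 4.1), the supremum-in-the-first-variable argument for $(i)\Leftrightarrow(ii)$, and the deferral of the discrete equivalences to Gr\"ochenig's frame-expansion and amalgam-sampling machinery, which is exactly what the paper does (it details only $(i)\Rightarrow(ii)$ and states that the rest ``follows the pattern'' of \cite{Grochenig_2006_Time}). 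One aside of yours is wrong, though harmless: $H_\tau$ is \emph{not} $\tau$-independent, since the analyzing window $\Phi_\tau = W_\tau(\varphi,\varphi)$ itself depends on $\tau$ — indeed the paper explicitly singles out this point as the source of the $\tau$-dependence of the controlling function — but nothing in your argument relies on that claim.
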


This kind of control is a characterizing property of symbols in those
spaces, in the following sense. 
\begin{cor*}
	Under the hypotheses of the previous Theorem, let $\tau \in [0,1]$ be fixed and assume that \, $T:\mathcal{S}\left(\mathbb{R}^{d}\right)\rightarrow\mathcal{S}'\left(\mathbb{R}^{d}\right)$
	is continuous and satisfies one of the following conditions:
	\begin{itemize}
		\item[$(i)$] $\left|\left\langle T\pi\left(z\right)\varphi,\pi\left(w\right)\varphi\right\rangle \right|\le H\left(w-z\right),\quad\forall w,z\in\mathbb{R}^{2d}$
		for some $H\in L^{1}$. 
		\item[$(ii)$] $\left|\left\langle T\pi\left(\mu\right)\varphi,\pi\left(\lambda\right)\varphi\right\rangle \right|\le h\left(\lambda-\mu\right),\quad\forall\lambda,\mu\in\Lambda$
		for some $h\in\ell^{1}$. 
	\end{itemize}
	Therefore, $T=\mathrm{Op}_{\tau}\left(\sigma\right)$ for some symbol
	$\sigma\in M^{\infty,1}\left(\mathbb{R}^{2d}\right)$. 
\end{cor*}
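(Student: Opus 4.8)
The plan is to combine the Schwartz kernel theorem with the equivalences just established in the Theorem. The crux is that every continuous linear operator $T\colon\cS(\rd)\to\cS'(\rd)$ is \emph{already} a $\tau$-pseudodifferential operator, with a (possibly very rough) tempered distribution symbol; the role of the hypotheses $(i)$ and $(ii)$ is then merely to upgrade the regularity of that symbol from $\cS'(\rdd)$ to $M^{\infty,1}(\rdd)$.

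First I would recall that, by the Schwartz kernel theorem, the continuity of $T$ furnishes a kernel $K_T\in\cS'(\rdd)$ such that $\langle Tf,g\rangle=\langle K_T,\,g\otimes\overline{f}\rangle$ for all $f,g\in\cS(\rd)$. Next I would observe that, for each fixed $\tau\in[0,1]$, the map sending a symbol $\sigma$ to the kernel of $\mathrm{Op}_\tau(\sigma)$ is the composition of a partial (inverse) Fourier transform in the frequency variable with the linear change of variables $(s,t)\mapsto((1-\tau)s+\tau t,\,s-t)$; explicitly, this kernel equals $\int_{\rd}\sigma\big((1-\tau)s+\tau t,\omega\big)\,e^{2\pi i(s-t)\omega}\,d\omega$. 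Each of these two operations is a topological isomorphism of $\cS'(\rdd)$ onto itself, so the symbol-to-kernel correspondence is a bijection of $\cS'(\rdd)$; inverting it on $K_T$ yields a unique $\sigma\in\cS'(\rdd)$ with $T=\mathrm{Op}_\tau(\sigma)$.

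With $\sigma\in\cS'(\rdd)$ now at our disposal, I would feed the hypothesis directly into the Theorem. If $T$ satisfies $(i)$, then, since $T=\mathrm{Op}_\tau(\sigma)$, the distribution $\sigma$ satisfies precisely condition $(ii)$ of the Theorem (taking $H_\tau:=H\in L^1(\rdd)$), and the implication $(ii)\Rightarrow(i)$ there gives $\sigma\in M^{\infty,1}(\rdd)$, as claimed. If instead $T$ satisfies $(ii)$ of the Corollary, the identical argument applies via condition $(iii)$ of the Theorem (taking $h_\tau:=h\in\ell^1(\Lambda)$) and the implication $(iii)\Rightarrow(i)$.

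I expect the main --- and essentially the only nontrivial --- obstacle to be the surjectivity of the $\tau$-quantization onto the space of continuous operators $\cS(\rd)\to\cS'(\rd)$, that is, the bijectivity of the symbol-to-kernel map. While this is classical for $\tau=\tfrac12$ (Weyl) and $\tau=0$ (Kohn--Nirenberg), for general $\tau$ one must check that the change of variables $(s,t)\mapsto((1-\tau)s+\tau t,\,s-t)$ stays invertible for every $\tau\in[0,1]$: its Jacobian equals $(-1)^d$, so no degeneration occurs, and the partial Fourier transform is an isomorphism of $\cS'(\rdd)$ regardless of $\tau$. Once this bijection is secured, the remainder is a direct invocation of the Theorem.
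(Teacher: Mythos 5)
Your proposal is correct and follows essentially the same route as the paper: the paper's own proof invokes Theorem \ref{optreps} (whose content is exactly the Schwartz-kernel-theorem argument you spell out, with the symbol-to-kernel correspondence $k=\mathfrak{T}_{\tau}^{-1}\mathcal{F}_{2}^{-1}\sigma$ a bijection of $\mathcal{S}'(\mathbb{R}^{2d})$ for each fixed $\tau$) to produce $\sigma\in\mathcal{S}'(\mathbb{R}^{2d})$ with $T=\mathrm{Op}_{\tau}(\sigma)$, and then applies the implications $(ii)\Rightarrow(i)$ and $(iii)\Rightarrow(i)$ of the characterization theorem exactly as you do. Your explicit check that the change of variables $(s,t)\mapsto((1-\tau)s+\tau t,\,s-t)$ has Jacobian $(-1)^{d}$ for every $\tau\in[0,1]$ simply unpacks what the paper cites, so there is no substantive difference.
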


We remark that, if either the previous condition $(i)$ or $(ii)$ is satisfied, then it is well known that the operator $T$ has Weyl symbol in the {S}j\"ostrand's class (\cite[Thereom 3.2]{Grochenig_2006_Time}). On the other hand, the mapping that goes to a Weyl symbol to a $\tau$-symbol is bounded in the {S}j\"ostrand's class, and the previous corollary follows from \cite[Thereom 3.2]{Grochenig_2006_Time}. Novelty in this connection mainly resides in the weighted version, cf. Theorem \ref{teor41} and Corollary \ref{corteor41} below.

A parallel pattern can be traced for symbols in $W(\cF L^\infty, L^1)(\rdd)$ although it is somewhat more involved because of the peculiar
way $\tau$ comes across.  This is not surprising if we notice that Weyl operators with symbols 
in $L^1(\rdd)\subset W(\cF L^\infty, L^1)(\rdd)$ are bounded on $L^2(\rd)$ (cf. \cite{Wangbook2011}) but neither Kohn-Nirenberg operators ($\tau=0$) nor operators  \textquotedblleft with  right symbol\textquotedblright ($\tau=1$) are, cf. \cite{Boulkhemairpseudo1995, BoulkhemairCPDE1997,BoulkhemairMRL1997}, see also \cite{EIFL2}.

For $\tau\in (0,1)$, we  introduce the symplectic matrix 
\begin{equation}\label{UT}
\mathcal{U}_{\tau}=-\left(\begin{array}{cc}
\frac{\tau}{1-\tau}I_{d\times d} & 0_{d\times d}\\
0_{d\times d} & \frac{1-\tau}{\tau}I_{d\times d}
\end{array}\right)\in\mathrm{Sp}\left(2d,\mathbb{R}\right)
\end{equation}
(see the next section for details). Our result is as follows:
\begin{thm*}
	Fix  non-zero window $\varphi\in\mathcal{S}\left(\mathbb{R}^{d}\right)$.
	For any $\tau\in\left(0,1\right)$, the following properties are equivalent:
	\begin{enumerate}
		\item[$(i)$] $\sigma\in W\left(\mathcal{F}L^{\infty},L^{1}\right)\left(\mathbb{R}^{2d}\right)$.
		\item[$(ii)$] $\sigma\in\mathcal{S}'\left(\mathbb{R}^{2d}\right)$ and there exists
		a function $H_{\tau}\in L^{1}\left(\mathbb{R}^{2d}\right)$ such
		that
	\begin{equation}\label{almostdieq}
		\left|\left\langle \mathrm{Op}_{\tau}\left(\sigma\right)\pi\left(z\right)\varphi,\pi\left(w\right)\varphi\right\rangle \right|\le H_{\tau}\left(w-\mathcal{U}_{\tau}z\right),\qquad\forall w,z\in\mathbb{R}^{2d}.
	\end{equation}
	\end{enumerate}
\end{thm*}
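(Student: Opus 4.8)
The plan is to reduce the matrix coefficients $\langle \mathrm{Op}_\tau(\sigma)\pi(z)\varphi,\pi(w)\varphi\rangle$ to a single short-time Fourier transform of the symbol $\sigma$, and then to read off membership in $\fiu$ from the known STFT description of that space. By the defining weak formulation \eqref{tauweak},
$$\langle \mathrm{Op}_\tau(\sigma)\pi(z)\varphi,\pi(w)\varphi\rangle=\langle\sigma,W_\tau(\pi(w)\varphi,\pi(z)\varphi)\rangle,$$
so everything hinges on a covariance formula for $W_\tau$ under time-frequency shifts. Writing $w=(w_1,w_2)$, $z=(z_1,z_2)\in\rdd$ and $\Phi:=W_\tau(\varphi,\varphi)\in\cS(\rdd)\setminus\{0\}$, I would first establish, by the change of variable $y\mapsto y+(w_1-z_1)$ directly in the integral \eqref{tauwig}, that
$$W_\tau(\pi(w)\varphi,\pi(z)\varphi)=c(w,z)\,\pi(\xi,\eta)\Phi,$$
where $c(w,z)$ is a unimodular constant and
$$\xi=\big((1-\tau)w_1+\tau z_1,\ \tau w_2+(1-\tau)z_2\big),\qquad \eta=\big(w_2-z_2,\ -(w_1-z_1)\big).$$
Since $\langle\sigma,\pi(\xi,\eta)\Phi\rangle=V_\Phi\sigma(\xi,\eta)$, this yields the exact identity $|\langle \mathrm{Op}_\tau(\sigma)\pi(z)\varphi,\pi(w)\varphi\rangle|=|V_\Phi\sigma(\xi,\eta)|$, valid for every $w,z$.

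The second step is a purely linear-algebraic bookkeeping of the map $(w,z)\mapsto(\xi,\eta)$. Setting $D_\tau=\mathrm{diag}((1-\tau)I_d,\tau I_d)$, one checks at once that $\xi=D_\tau(w-\mathcal{U}_\tau z)$, with $\mathcal{U}_\tau$ as in \eqref{UT}; in particular the translation variable $\xi$ of the STFT depends on $(w,z)$ \emph{only} through $v:=w-\mathcal{U}_\tau z$. On the other hand, for each fixed $\xi$ (equivalently fixed $v$) the modulation variable $\eta=(w_2-z_2,-(w_1-z_1))$ sweeps all of $\rdd$ as $(w,z)$ runs over the fiber $\{w-\mathcal{U}_\tau z=v\}$; indeed the full linear map $(w,z)\mapsto(\xi,\eta)$ factors into the two blocks $(w_1,z_1)\mapsto(\xi_1,\eta_2)$ and $(w_2,z_2)\mapsto(\xi_2,\eta_1)$, each of determinant $\pm1$, so it is a bijection of $\mathbb{R}^{4d}$ with unit Jacobian. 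Here the restriction $\tau\in(0,1)$ enters decisively: it is exactly what makes $D_\tau$ (and $\mathcal{U}_\tau$) invertible.

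Now I would invoke the STFT characterization of the amalgam space, namely $\sigma\in\fiu(\rdd)$ if and only if $\int_{\rdd}\sup_{\eta\in\rdd}|V_\Phi\sigma(\xi,\eta)|\,d\xi<\infty$ (supremum in the frequency variable, integral in the position variable --- the feature that distinguishes $\fiu$ from $M^{\infty,1}$), valid for any fixed window in $\cS(\rdd)\setminus\{0\}$, here $\Phi=W_\tau(\varphi,\varphi)$. For $(i)\Rightarrow(ii)$, set $H_\tau(v):=\sup_{\eta}|V_\Phi\sigma(D_\tau v,\eta)|$; then $|\langle \mathrm{Op}_\tau(\sigma)\pi(z)\varphi,\pi(w)\varphi\rangle|=|V_\Phi\sigma(D_\tau v,\eta)|\le H_\tau(v)=H_\tau(w-\mathcal{U}_\tau z)$, while the change of variable $\xi=D_\tau v$ gives $\|H_\tau\|_{L^1}=|\det D_\tau|^{-1}\int\sup_\eta|V_\Phi\sigma(\xi,\eta)|\,d\xi<\infty$. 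For $(ii)\Rightarrow(i)$, fix $\xi$ and any $\eta$ and pick the preimage $(w,z)$; since the majorant $H_\tau(w-\mathcal{U}_\tau z)=H_\tau(D_\tau^{-1}\xi)$ does not depend on $\eta$, taking the supremum gives $\sup_\eta|V_\Phi\sigma(\xi,\eta)|\le H_\tau(D_\tau^{-1}\xi)$, and $\int_\xi H_\tau(D_\tau^{-1}\xi)\,d\xi=|\det D_\tau|\,\|H_\tau\|_{L^1}<\infty$, so $\sigma\in\fiu$.

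The computational heart, and the step I expect to be the most delicate, is the covariance identity: one must carry out the change of variables in \eqref{tauwig} without losing track of the several phase factors, and correctly separate the part that is linear in the phase-space variable $(x,\omega)$ (which becomes the modulation $M_\eta$) from the constant phase $c(w,z)$. Everything else is bookkeeping. It is also worth stressing why the argument is special to $\tau\in(0,1)$: the translation variable is $\xi=D_\tau(w-\mathcal{U}_\tau z)$, and for $\tau\in\{0,1\}$ one of the diagonal blocks of $D_\tau$ degenerates, $\mathcal{U}_\tau$ ceases to be defined, and the clean decoupling ``$\xi$ sees only $w-\mathcal{U}_\tau z$ while $\eta$ sees everything else'' collapses --- consistent with the weaker statements available in the endpoint cases.
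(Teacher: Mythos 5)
Your proposal is correct and follows essentially the same route as the paper: your covariance identity is exactly Proposition \ref{prop:covtauWDF} (your $(\xi,\eta)$ being the paper's $\left(\mathcal{T}_{\tau}\left(w,z\right),J\left(w-z\right)\right)$ from Lemma \ref{lem:STFT-gaborm}), your factorization $\xi=D_{\tau}\left(w-\mathcal{U}_{\tau}z\right)$ with $D_{\tau}=\mathcal{B}_{\tau}^{-1}$ is precisely the paper's key observation $\mathcal{B}_{\tau}\left(\mathcal{T}_{\tau}\left(w,z\right)\right)=w-\mathcal{U}_{\tau}z$, and the sup-then-integrate argument in both directions (including the surjectivity of $(w,z)\mapsto(\xi,\eta)$, which the paper encodes via the explicit inverse \eqref{eq:z,w}) matches the proof of Theorem \ref{thm:almost diag}. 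The only cosmetic difference is that you derive the covariance by a direct change of variables in \eqref{tauwig}, whereas the paper routes it through the STFT representation \eqref{eq:WDF STFT} and the operator $A_{\tau}$ of \eqref{Atau} --- an equivalent computation.
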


As easy example, we shall consider $\sigma=\delta\in W\left(\mathcal{F}L^{\infty},L^{1}\right)$  (cf. Example \ref{esempio} below). In this case,  it is easy to see that $\mathrm{Op}_{\tau}(\delta)$ is a linear change of variables and  the estimate in \eqref{almostdieq} is an equality.

Another difference from the Sj\"ostrand symbol class concerns
the discrete almost diagonalization, recovered only if $\tau=\tfrac{1}{2}$, see the subsequent Corollary \ref{discreteWeyl}. 

Even though  $$w-\mathcal{U}_{\tau}z=0$$ is not exactly the diagonal, by abuse of notation we use the term \emph{almost diagonalization} also in this framework.  In fact, \eqref{almostdieq} can be suitably interpreted as an estimate on the concentration of the time-frequency representation $\opt (\sigma)$ along the graph of $\cU_{\tau}$.  

This kind of control on the kernels paves the way for proving relevant
properties of $\tau$-operators. Here we mainly focus on boundedness results in different settings, where Wiener amalgam and modulation spaces play the role of symbol class and domain/target. In particular, we underline the connection with the theory of Fourier integral operators (FIOs): we have that, under suitable conditions, $\tau$-pseudodifferential operators admit a representation as type I FIOs, thus inheriting a number of features
(including boundedness, algebra and Wiener properties) established in previous papers by two of the authors - see for instance \cite{cnr,cordero2013wiener,generalizedmetaplectic}. In addition, we point out that even when this overlapping does not hold, the insightful proof strategies developed there can still be exploited. Apart from the technical assumptions on weights which will be specified later, our results can be collected in Table \ref{tabella}. 
\begin{center}
	\begin{table}[!h]
		\centering
		 \setcellgapes{3pt} \makegapedcells
	\begin{tabular}{|c|c|c|}
		\hline 
		$\boldsymbol{\tau}$ & \textbf{Symbol - $\left(\mathbb{R}^{2d}\right)$} & \textbf{Boundedness - $\left(\mathbb{R}^{d}\right)$}, \,$1\leq p,q\leq\infty$\tabularnewline
		\hline 
		\hline 
		$\left(0,1\right)$ & $\sigma\in W\left(\mathcal{F}L^{\infty},L^{1}\right)$ & $\mathrm{Op}_{\tau}\left(\sigma\right):M^{p,q}\rightarrow M^{p,q}$\tabularnewline
		\hline 
		\makecell{$\left[0,1\right]$} & \makecell{$\sigma\in M^{\infty,1}$} & \makecell{$\mathrm{Op}_{\tau}\left(\sigma\right):W\left(\mathcal{F}L^{p},L^{q}\right)\rightarrow W\left(\mathcal{F}L^{p},L^{q}\right)$}\tabularnewline
		\hline 
		\makecell{$\left(0,1\right)$} & \makecell{$\sigma\in W(\mathcal{F}L^{\infty},L^{1})$} & \makecell{ $\mathrm{Op}_{\tau}\left(\sigma\right):W\left(\mathcal{F}L^{p},L^{q}\right)$ 
	$\rightarrow W\left(\mathcal{F}L^{p},L^{q}\right)$}\tabularnewline
		\hline 
		0 & $\sigma\in W\left(\mathcal{F}L^{\infty},L^{1}\right)$ & $\mathrm{Op}_{\mathrm{KN}}\left(\sigma\right):M^{1,\infty}\rightarrow M^{1,\infty}$\tabularnewline
		\hline 
		\multirow{1}{*}{1} & \multirow{1}{*}{$\sigma\in W\left(\mathcal{F}L^{\infty},L^{1}\right)$} & $\mathrm{Op}_{1}\left(\sigma\right):W\left(\mathcal{F}L^{1},L^{\infty}\right)\rightarrow W\left(\mathcal{F}L^{1},L^{\infty}\right)$\tabularnewline
		\hline 
	\end{tabular}
	    \bigskip
	\caption{Boundedness results proved in the paper. \label{tabella}}
\end{table}
	\par\end{center}

It is important to emphasize again the singular behaviour showed
by the end-points $\tau=0$ and $\tau=1$, which is essentially
due to weaker versions of the previous theorems, see  Propositions \ref{Prop5.8} and \ref{Prop5.9} in the sequel. 

We finally remark that, as said earlier, the results on approximate
diagonalization allow a potentially large number of directions to
be investigated, even in a more abstract framework (cf. for instance \cite{grochenig2008banach}). We wish
to explore some of these routes in subsequent papers.

\section{Preliminaries}
 \textbf{Notation.} We define $t^2=t\cdot t$, for $t\in\rd$, and
  $xy=x\cdot y$ is the scalar product on $\Ren$. The Schwartz class is denoted by  $\sch(\Ren)$, the space of tempered
  distributions by  $\sch'(\Ren)$.   We use the brackets  $\la
  f,g\ra$ to denote the extension to $\sch' (\Ren)\times\sch (\Ren)$ of
  the inner product $\la f,g\ra=\int f(t){\overline {g(t)}}dt$ on
  $L^2(\Ren)$. 
  The Fourier transform of a function $f$ on $\rd$ is normalized as
  \[
  \Fur f(\xi)= \int_{\rd} e^{-2\pi i x\xi} f(x)\, dx.
  \]

 The modulation $M_{\omega}$ and translation $T_{x}$ operators   are defined as 
 \[
 M_{\omega}f\left(t\right)= e^{2\pi it \omega}f\left(t\right),\qquad T_{x}f\left(t\right)= f\left(t-x\right).
 \]
 The following result exhibits composition and commutation properties of the \tfs s.
 \begin{lemma}
 	For $x,x',\omega,\omega'\in\mathbb{R}^{d}$, we have\\
 	(i) Commutation relations for TF-shifts:
 	\begin{equation}
 	T_{x}M_{\omega}=e^{-2\pi ix\omega}M_{\omega}T_{x}.\label{eq: fundid}
 	\end{equation}
 	(ii) Compositions of TF-shifts:
 	\begin{equation}
 	\pi\left(x,\omega\right)\pi\left(x',\omega'\right)=e^{-2\pi ix\omega'}\pi\left(x+x',\omega+\omega'\right).\label{eq: composizione pi}
 	\end{equation}
 	More explicitly,
 	\[
 	\left(M_{\omega}T_{x}\right)\left(M_{\omega'}T_{x'}\right)=e^{-2\pi ix\omega'}M_{\omega+\omega'}T_{x+x'}.
 	\]
 \end{lemma}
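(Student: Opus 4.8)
The plan is to verify both identities by direct computation on an arbitrary test function $f\in\cS(\rd)$ (or simply $f\in L^2(\rd)$), unwinding the definitions of $M_\omega$ and $T_x$ given just above in the Preliminaries. For part (i), I would evaluate $T_x M_\omega f$ at a point $t\in\rd$: applying $M_\omega$ first yields $e^{2\pi i t\omega}f(t)$, and then translating by $x$ replaces $t$ by $t-x$, giving $(T_x M_\omega f)(t)=e^{2\pi i (t-x)\omega}f(t-x)$. Comparing this with $(M_\omega T_x f)(t)=e^{2\pi i t\omega}f(t-x)$, the two differ precisely by the scalar factor $e^{-2\pi i x\omega}$, which establishes \eqref{eq: fundid}. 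The only point requiring care is keeping the order of operator application consistent with the convention $(AB)f=A(Bf)$; everything else is a one-line substitution.

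For part (ii), I would express $\pi(x,\omega)=M_\omega T_x$ according to the definition \eqref{eq:kh25} and then compose two such shifts. Writing $\pi(x,\omega)\pi(x',\omega')f = M_\omega T_x M_{\omega'}T_{x'}f$, the natural strategy is to move $T_x$ past $M_{\omega'}$ using the commutation relation \eqref{eq: fundid} just proved, which converts $T_x M_{\omega'}$ into $e^{-2\pi i x\omega'}M_{\omega'}T_x$. After pulling out this scalar, the remaining product becomes $M_\omega M_{\omega'}T_x T_{x'}$; since modulations multiply their frequencies and translations add their shifts (i.e.\ $M_\omega M_{\omega'}=M_{\omega+\omega'}$ and $T_x T_{x'}=T_{x+x'}$, both immediate from the definitions), this collapses to $e^{-2\pi i x\omega'}M_{\omega+\omega'}T_{x+x'}=e^{-2\pi i x\omega'}\pi(x+x',\omega+\omega')$, which is exactly \eqref{eq: composizione pi}. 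The ``more explicit'' restatement in terms of $M_\omega T_x$ is then just the same equality with $\pi$ written out.

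I do not anticipate any genuine obstacle here, as the result is an elementary bookkeeping computation. The one place where an error could creep in is the sign of the cocycle phase: the factor $e^{-2\pi i x\omega'}$ depends on the chosen ordering $\pi=M_\omega T_x$ rather than $T_x M_\omega$, so I would be careful to apply the commutation relation in the correct direction and to confirm that the phase attaches to the $x$ of the first shift paired with the $\omega'$ of the second. As a consistency check, one can verify associativity of the composition against the stated formula, or specialize to $\omega'=0$ or $x=0$ to recover the trivial sub-cases. Since both identities are purely formal consequences of the exponential's functional equation, the verification extends verbatim from $\cS(\rd)$ to all of $L^2(\rd)$ and indeed to $\cS'(\rd)$ by density and duality.
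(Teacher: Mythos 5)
Your proposal is correct, and it is essentially the only natural argument: the paper states this lemma without proof (it is the standard elementary fact from time-frequency analysis), and your route --- verifying (i) by a pointwise evaluation under the conventions $M_{\omega}f(t)=e^{2\pi i t\omega}f(t)$, $T_{x}f(t)=f(t-x)$, and then deriving (ii) from (i) together with $M_{\omega}M_{\omega'}=M_{\omega+\omega'}$ and $T_{x}T_{x'}=T_{x+x'}$ --- is exactly the canonical computation, with the ordering convention $\pi(x,\omega)=M_{\omega}T_{x}$ from \eqref{eq:kh25} and the sign of the cocycle phase handled correctly. No gaps.
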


 Let $f\in\cS'(\rd)$. We define the short-time Fourier transform of $f$ as
 \begin{equation}\label{STFTdef}
 V_gf\phas=\langle f,\pi\phas g\rangle=\Fur (fT_x g)(\omega)=\int_{\Ren}
 f(y)\, {\overline {g(y-x)}} \, e^{-2\pi iy \o }\,dy.
 \end{equation}
 Recall the fundamental property of time-frequency analysis
 \begin{equation}\label{FI}
 V_{g}f\left(x,\omega\right)=e^{-2\pi ix\omega}V_{\hat{g}}\hat{f}\left(\omega,-x\right).
 \end{equation}

Denote by $J$ the canonical symplectic matrix in $\mathbb{R}^{2d}$:
\[
J=\left(\begin{array}{cc}
0_{d\times d} & I_{d\times d}\\
-I_{d\times d} & 0_{d\times d}
\end{array}\right)\in\mathrm{Sp}\left(2d,\mathbb{R}\right),
\]
where  the
symplectic group $\mathrm{Sp}\left(2d,\mathbb{R}\right)$ is defined
by
$$
\Spnr=\left\{M\in\Gltwonr:\;M^{\top}JM=J\right\}.
$$
Observe that, for $z=\left(z_{1},z_{2}\right)\in\mathbb{R}^{2d}$, we have
$Jz=J\left(z_{1},z_{2}\right)=\left(z_{2},-z_{1}\right),$  $J^{-1}z=J^{-1}\left(z_{1},z_{2}\right)=\left(-z_{2},z_{1}\right)=-Jz,$ and 
$J^{2}=-I_{2d\times2d}.$

 \begin{lemma}
 	~\label{lem:properties of Atau}For any $\tau\in (0,1)$,
 	consider the matrix 
 	\begin{equation}
 	\mathcal{A}_{\tau}=\left(\begin{array}{cc}
 	0_{d\times d} & \sqrt{\frac{1-\tau}{\tau}}I_{d\times d}\\
 	-\sqrt{\frac{\tau}{1-\tau}}I_{d\times d} & 0_{d\times d}
 	\end{array}\right).
 	\end{equation}
 	The following properties hold:
 	\begin{enumerate}
 		\item $\mathcal{A}_{\tau}\in\mathrm{Sp}\left(d,\mathbb{R}\right)$; in particular,
 		$\mathcal{A}_{1/2}=J$.
 		\item $\mathcal{A}_{\tau}^{\top}=-\mathcal{A}_{1-\tau}$, $\mathcal{A}_{\tau}^{-1}=-\mathcal{A}_{\tau}$.
 		\item $\mathcal{A}_{1-\tau}\mathcal{A}_{\tau}=\mathcal{A}_{\tau}^{\top}\mathcal{A}_{\tau}^{-1}=I_{2d\times2d}-\mathcal{B}_{\tau}$,
 		where 
 		\begin{equation}
 		\mathcal{B}_{\tau}=\left(\begin{array}{cc}
 		\frac{1}{1-\tau}I_{d\times d} & 0_{d\times d}\\
 		0_{d\times d} & \frac{1}{\tau}I_{d\times d}
 		\end{array}\right).\label{eq:Btau}
 		\end{equation}
 		\item $\sqrt{\tau\left(1-\tau\right)}\left(\mathcal{A}_{\tau}+\mathcal{A}_{1-\tau}\right)=\sqrt{\tau\left(1-\tau\right)}\mathcal{B}_{\tau}\mathcal{A}_{\tau}=J$.
 	\end{enumerate}
 	\end{lemma}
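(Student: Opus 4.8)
The plan is to reduce everything to elementary arithmetic of $2\times2$ block matrices whose blocks are scalar multiples of $I_{d\times d}$, and hence commute. First I would introduce the shorthand $a=\sqrt{(1-\tau)/\tau}$ and $b=\sqrt{\tau/(1-\tau)}$, so that
\[
\mathcal{A}_{\tau}=\begin{pmatrix} 0 & aI\\ -bI & 0\end{pmatrix},\qquad \mathcal{A}_{1-\tau}=\begin{pmatrix} 0 & bI\\ -aI & 0\end{pmatrix},
\]
the second matrix being obtained from the first by the substitution $\tau\leftrightarrow 1-\tau$, which simply swaps $a$ and $b$ (here $I=I_{d\times d}$). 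The entire lemma then rests on three scalar identities that I would record at the outset: $ab=1$, the pair $a^{2}=(1-\tau)/\tau$ and $b^{2}=\tau/(1-\tau)$, and finally
\[
a+b=\frac{(1-\tau)+\tau}{\sqrt{\tau(1-\tau)}}=\frac{1}{\sqrt{\tau(1-\tau)}}.
\]
Each of the four claims is then forced by one of these.

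For part (1) I would transpose to get $\mathcal{A}_{\tau}^{\top}=\begin{pmatrix} 0 & -bI\\ aI & 0\end{pmatrix}$, compute $J\mathcal{A}_{\tau}=\begin{pmatrix} -bI & 0\\ 0 & -aI\end{pmatrix}$, and conclude $\mathcal{A}_{\tau}^{\top}J\mathcal{A}_{\tau}=\begin{pmatrix} 0 & abI\\ -abI & 0\end{pmatrix}=J$ by $ab=1$; the special value $\mathcal{A}_{1/2}=J$ is just the case $a=b=1$. For part (2), the identity $\mathcal{A}_{\tau}^{\top}=-\mathcal{A}_{1-\tau}$ is immediate from comparing the two displayed matrices, while $\mathcal{A}_{\tau}^{-1}=-\mathcal{A}_{\tau}$ follows once I observe that $\mathcal{A}_{\tau}^{2}=-abI_{2d\times2d}=-I_{2d\times2d}$. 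Part (3) is then almost free: the product $\mathcal{A}_{1-\tau}\mathcal{A}_{\tau}=\begin{pmatrix} -b^{2}I & 0\\ 0 & -a^{2}I\end{pmatrix}$ matches $I_{2d\times2d}-\mathcal{B}_{\tau}$ after substituting $b^{2}$, $a^{2}$ and checking $1-\tfrac{1}{1-\tau}=-\tfrac{\tau}{1-\tau}=-b^{2}$ in the top block (and symmetrically below), whereas the equality $\mathcal{A}_{\tau}^{\top}\mathcal{A}_{\tau}^{-1}=\mathcal{A}_{1-\tau}\mathcal{A}_{\tau}$ is merely part (2) applied to each factor.

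Part (4) is where the additive identity $a+b=1/\sqrt{\tau(1-\tau)}$ is used. Summing gives $\mathcal{A}_{\tau}+\mathcal{A}_{1-\tau}=\begin{pmatrix} 0 & (a+b)I\\ -(a+b)I & 0\end{pmatrix}$, so scaling by $\sqrt{\tau(1-\tau)}$ turns the off-diagonal scalar into $1$ and produces $J$; for the remaining expression I would compute $\mathcal{B}_{\tau}\mathcal{A}_{\tau}=\begin{pmatrix} 0 & \tfrac{a}{1-\tau}I\\ -\tfrac{b}{\tau}I & 0\end{pmatrix}$ and note that both $\tfrac{a}{1-\tau}$ and $\tfrac{b}{\tau}$ collapse to $1/\sqrt{\tau(1-\tau)}$, giving $\sqrt{\tau(1-\tau)}\,\mathcal{B}_{\tau}\mathcal{A}_{\tau}=J$ as well. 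There is no genuine obstacle here — the statement is purely computational — so the only thing to watch is the bookkeeping of the square-root factors; the point of the proof is precisely to organize it through the shorthand $a,b$ so that every claim reduces to one of the three recorded scalar identities rather than being re-derived from scratch.
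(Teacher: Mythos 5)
Your proof is correct, and it takes the same route as the paper, which simply states that the properties ``follow by easy computations'' without writing them out; your shorthand $a=\sqrt{(1-\tau)/\tau}$, $b=\sqrt{\tau/(1-\tau)}$ with the three scalar identities $ab=1$, $a^2=(1-\tau)/\tau$, $b^2=\tau/(1-\tau)$, $a+b=1/\sqrt{\tau(1-\tau)}$ is exactly the kind of bookkeeping the authors left implicit, and all of your block-matrix computations check out.
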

 	\begin{proof} These properties follow by easy computations. 
 		\end{proof}
 	Although $\mathcal{A}_{\tau}$ is well defined only for $\tau\in\left(0,1\right)$,
 	notice that the matrix 
 	\begin{equation}\label{A'}
 	\mathcal{A}_{\tau}'=\left(\begin{array}{cc}
 	0_{d\times d} & \left(1-\tau\right)I_{d\times d}\\
 	-\tau I_{d\times d} & 0_{d\times d}
 	\end{array}\right)
 	\end{equation}
 	is well defined for any $\tau\in\left[0,1\right]$, though non-invertible
 	and non-symplectic when $\tau\in\left\{ 0,1\right\} $. In particular:
 	\begin{enumerate}
 		\item For $\tau\in\left(0,1\right)$, $\mathcal{A}_{\tau}'=\sqrt{\tau\left(1-\tau\right)}\mathcal{A}_{\tau}$. 
 		\item $\left(\mathcal{A}_{\tau}'\right)^{\top}=-\mathcal{A}_{1-\tau}'$. 
 		\item $\mathcal{A}_{\tau}'+\mathcal{A}_{1-\tau}'=J$. 
 	\end{enumerate}
In the following we shall use the \stft\, of a $\tau$-Wigner distribution, which is contained in \cite{EIFL2}.
 \begin{lemma}
 	\label{lem:STFT of tauWig}Consider $\tau\in\left[0,1\right]$, $\varphi_{1},\varphi_{2}\in\mathcal{S}\left(\mathbb{R}^{d}\right)$,
 	$f,g\in\mathcal{S}\left(\mathbb{R}^{d}\right)$ and set $\Phi_{\tau}=W_{\tau}\left(\varphi_{1},\varphi_{2}\right)\in\mathcal{S}\left(\mathbb{R}^{2d}\right)$.
 	Then, 
 	\[
 	{V}_{\Phi_{\tau}}W_{\tau}\left(g,f\right)\left(z,\zeta\right)=e^{-2\pi iz_{2}\zeta_{2}}V_{\varphi_{1}}g\left(z-\mathcal{A}_{1-\tau}'\zeta\right)\overline{V_{\varphi_{2}}f\left(z+\mathcal{A}_{\tau}'\zeta\right),}
 	\]
 	where $z=\left(z_{1},z_{2}\right),\,\zeta=\left(\zeta_{1},\zeta_{2}\right)\in\mathbb{R}^{2d}$ and the matrix $\mathcal{A}_{\tau}'$ is defined in \eqref{A'}.
 	In particular, 
  	\[
 	\left|{V}_{\Phi_{\tau}}W_{\tau}\left(g,f\right)\right|=\left|V_{\varphi_{1}}g\left(z-\mathcal{A}_{1-\tau}'\zeta\right)\right|\cdot\left|V_{\varphi_{2}}f\left(z+\mathcal{A}_{\tau}'\zeta\right)\right|.
 	\]
 	
 \end{lemma}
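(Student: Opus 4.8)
The plan is to evaluate the outer short-time Fourier transform directly from the definition \eqref{STFTdef} and to reduce the resulting multiple integral to a product of two STFTs by a single volume-preserving substitution. Writing $w=(x,\omega)\in\Renn$ for the integration variable, I would begin from
\[
V_{\Phi_{\tau}}W_{\tau}(g,f)(z,\zeta)=\int_{\Renn}W_{\tau}(g,f)(x,\omega)\,\overline{\Phi_{\tau}(x-z_{1},\omega-z_{2})}\,e^{-2\pi i(x\zeta_{1}+\omega\zeta_{2})}\,dx\,d\omega,
\]
then insert the defining integrals \eqref{tauwig} for $W_{\tau}(g,f)$ and for $\Phi_{\tau}=W_{\tau}(\varphi_{1},\varphi_{2})$, introducing the two inner variables $y$ and $y'$. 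Since $f,g,\varphi_{1},\varphi_{2}\in\cS(\Ren)$, everything is absolutely convergent and Fubini lets me choose the order of integration freely.

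First I would perform the $\omega$-integration. The three $\omega$-dependent exponentials combine to $\int_{\Ren}e^{-2\pi i\omega(y-y'+\zeta_{2})}\,d\omega=\delta(y-y'+\zeta_{2})$, which forces $y'=y+\zeta_{2}$ and removes the $y'$-integral. What survives is a double integral in $(x,y)$ together with the scalar prefactor $e^{-2\pi i z_{2}\zeta_{2}}$ coming from the factor $e^{-2\pi i y' z_{2}}$ at $y'=y+\zeta_{2}$; the remaining $y$-dependent exponential is kept and absorbed in the next step.

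The decisive move is the substitution $u=x+\tau y$, $v=x-(1-\tau)y$, namely the arguments of $g$ and of $\overline{f}$. Its Jacobian has modulus one, and a short check shows that the arguments of $\overline{\varphi_{1}}$ and $\varphi_{2}$ become $u-(z_{1}-\tau\zeta_{2})$ and $v-(z_{1}+(1-\tau)\zeta_{2})$, while the linear phase $x\zeta_{1}+yz_{2}$ splits as $u\,[(1-\tau)\zeta_{1}+z_{2}]+v\,[\tau\zeta_{1}-z_{2}]$. Hence the integrand factorizes into a function of $u$ times a function of $v$. The $u$-integral is exactly $V_{\varphi_{1}}g$ at $(z_{1}-\tau\zeta_{2},\,z_{2}+(1-\tau)\zeta_{1})=z-\mathcal{A}_{1-\tau}'\zeta$, and the $v$-integral, once the complex conjugation and the consequent sign reversal of the frequency entry are accounted for, equals $\overline{V_{\varphi_{2}}f}$ at $(z_{1}+(1-\tau)\zeta_{2},\,z_{2}-\tau\zeta_{1})=z+\mathcal{A}_{\tau}'\zeta$, using the explicit form of $\mathcal{A}_{\tau}'$ in \eqref{A'}. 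This gives the stated identity, and the modulus formula is immediate since $|e^{-2\pi i z_{2}\zeta_{2}}|=1$.

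The argument is essentially bookkeeping, so I expect the only genuine difficulty to be keeping the exponential factors and shifted arguments aligned—in particular, tracking the frequency variables through the conjugation so that the second factor emerges as $\overline{V_{\varphi_{2}}f}$ with the correctly sign-reversed frequency coordinate. The regrouping of $x\zeta_{1}+yz_{2}$ in the coordinates $(u,v)$ is the one step where a stray sign or coefficient would feed into the wrong matrix, so before concluding I would confirm that the second coordinates $(1-\tau)\zeta_{1}+z_{2}$ and $z_{2}-\tau\zeta_{1}$ match those of $z-\mathcal{A}_{1-\tau}'\zeta$ and $z+\mathcal{A}_{\tau}'\zeta$ respectively.
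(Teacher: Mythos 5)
Your computation is correct, and I have checked the bookkeeping in detail: the surviving phase after the $\omega$-integration is indeed $e^{-2\pi i z_2\zeta_2}e^{-2\pi i(x\zeta_1+yz_2)}$; the substitution $u=x+\tau y$, $v=x-(1-\tau)y$ has Jacobian determinant $(-1)^d$ (so modulus one \emph{uniformly in} $\tau\in[0,1]$, which is what makes your argument cover the endpoint cases $\tau=0,1$ where $\mathcal{A}_\tau$ degenerates but $\mathcal{A}'_\tau$ of \eqref{A'} does not); the splitting $x\zeta_1+yz_2=u\left[(1-\tau)\zeta_1+z_2\right]+v\left[\tau\zeta_1-z_2\right]$ is right; and the two factors are $V_{\varphi_1}g\left(z_1-\tau\zeta_2,\,z_2+(1-\tau)\zeta_1\right)=V_{\varphi_1}g\left(z-\mathcal{A}'_{1-\tau}\zeta\right)$ and $\overline{V_{\varphi_2}f\left(z_1+(1-\tau)\zeta_2,\,z_2-\tau\zeta_1\right)}=\overline{V_{\varphi_2}f\left(z+\mathcal{A}'_{\tau}\zeta\right)}$, with the frequency sign reversal under conjugation handled correctly. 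Note that the paper itself gives no proof of this lemma (it is imported from \cite{EIFL2}), so there is no in-paper argument to compare against; your direct computation is the natural one, and it is the standard $\tau$-deformation of the Weyl-case computation in Gr\"ochenig's book.

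One justification in your write-up is genuinely wrong as stated, though the conclusion survives: you cannot invoke Fubini to integrate in $\omega$ first. The modulus of the four-fold integrand in $(x,\omega,y,y')$ does not depend on $\omega$ at all (the $\omega$-dependence sits entirely in unimodular phases), so the quadruple integral is \emph{not} absolutely convergent and the identity $\int_{\mathbb{R}^d}e^{-2\pi i\omega(y-y'+\zeta_2)}\,d\omega=\delta(y-y'+\zeta_2)$ is not a Fubini consequence. The repair is routine: for fixed $x$, both $\omega\mapsto W_\tau(g,f)(x,\omega)$ and $\omega\mapsto\Phi_\tau(x-z_1,\omega-z_2)$ are Fourier transforms of the Schwartz functions $G_x(y)=g(x+\tau y)\overline{f(x-(1-\tau)y)}$ and $K_x(y')=\varphi_1(x-z_1+\tau y')\overline{\varphi_2(x-z_1-(1-\tau)y')}$, and one checks that $e^{-2\pi i\omega\zeta_2}\,\overline{\Phi_\tau(x-z_1,\omega-z_2)}=e^{-2\pi i z_2\zeta_2}\,\overline{\mathcal{F}\left(M_{z_2}T_{-\zeta_2}K_x\right)(\omega)}$, so the $\omega$-integral is an $L^2$ inner product of two Fourier transforms and Parseval gives exactly your post-delta expression $e^{-2\pi i z_2\zeta_2}\int_{\mathbb{R}^d}G_x(y)\,e^{-2\pi i yz_2}\,\overline{K_x(y+\zeta_2)}\,dy$; the remaining $(x,y)$-integral is then absolutely convergent by Schwartz decay and your change of variables proceeds as written. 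With this substitution of Parseval for the unjustified Fubini/delta step, the proof is complete.
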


\subsection{Function Spaces} We first need to introduce the weight functions under our consideration.\\ 
{\textbf {Weight functions.}}
Following \cite[Sec.\ 2.3]{Grochenig_2006_Time}, we will work with so-called \emph{admissible
weight functions}, i.e. by $v$ we always denote a non-negative continuous
function on $\mathbb{R}^{2d}$ such that 
\begin{enumerate}
\item $v\left(0\right)=1$ and $v$ is even in each coordinate: 
\[
v\left(\pm z_{1},\ldots,\pm z_{2d}\right)=v\left(z_{1},\ldots,z_{2d}\right).
\]
\item $v$ is submultiplicative, that is 
\[
v\left(w+z\right)\le v\left(w\right)v\left(z\right)\qquad\forall w,z\in\mathbb{R}^{2d}.
\]
\item $v$ satisfies the Gelfand-Raikov-Shilov (GRS) condition:
\[
\lim_{n\rightarrow\infty}v\left(nz\right)^{\frac{1}{n}}=1\qquad\forall z\in\mathbb{R}^{2d}.
\]
\end{enumerate}
Every weight of the form $v\left(z\right)=e^{a\left|z\right|^{b}}\left(1+\left|z\right|\right)^{s}\log^{r}\left(e+\left|z\right|\right)$,
with real parameters $a,r,s\ge0$ and $0\leq b<1$, is admissible.
Weights of particular interest are provided by
\begin{equation}\label{vs}
v_{s}\left(z\right)=\left\langle z\right\rangle ^{s}=\left(1+\left|z\right|^{2}\right)^{\frac{s}{2}},\qquad z\in\mathbb{R}^{2d},\,s\ge0.
\end{equation}
Observe that, for $s\geq 0$, the weight function  $v_s$ is equivalent to the submultiplicative weight $(1+|\cdot|)^s$, that is, there exist $C_1, C_2>0$ such that 
$$C_1 v_s(z)\leq (1+|z|)^s\leq C_2 v_s(z),\quad z\in\rdd.$$
A positive, even weight function $m$ on $\Renn$ is called  {\it
	$v$-moderate} if
$ m(z_1+z_2)\leq Cv(z_1)m(z_2)$  for all $z_1,z_2\in\Renn.$
We simply write $\mathcal{M}_{v}$ for the class of $v$-moderate weights.

Here and elsewhere the conjugate exponent $p'$ of $p \in [1,\infty]$ is defined by $1/p+1/p'=1$.
Moreover, in order to remain in the framework of tempered distributions, in what follows we shall consider weight functions $m$ on $\rd$ or $\rdd$ satisfying the following condition
\begin{equation}\label{M}
m(z)\geq 1,\quad \forall z\in \rd\quad \mbox{or} \quad m(z)\gtrsim \la z\ra^{-N}.
\end{equation}
for a suitable $N\in\bN$. The same holds for weights on $\rdd$.

\noindent
\textbf{Modulation Spaces.}
Given a non-zero window $g\in\sch(\Ren)$, a $v$-moderate weight
function $m$ on $\Renn$ satisfying \eqref{M}, and $1\leq p,q\leq
\infty$, the {\it
	modulation space} $M^{p,q}_m(\Ren)$ consists of all tempered
distributions $f\in\sch'(\Ren)$ such that $V_gf\in L^{p,q}_m(\Renn )$
(weighted mixed-norm space). The norm on $M^{p,q}_m$ is
$$
\|f\|_{M^{p,q}_m}=\|V_gf\|_{L^{p,q}_m}=\left(\int_{\Ren}
\left(\int_{\Ren}|V_gf(x,\o)|^pm(x,\o)^p\,
dx\right)^{q/p}d\o\right)^{1/q}  \, .
$$
If $p=q$, we write $M^p_m$ instead of $M^{p,p}_m$, and if $m(z)\equiv 1$ on $\Renn$, then we write $M^{p,q}$ and $M^p$ for $M^{p,q}_m$ and $M^{p,p}_m$.

Then  $\Mmpq (\Ren )$ is a Banach space
whose definition is independent of the choice of the window $g$. The class of admissible windows can be extended to $M^1_v$, as stated below (\cite[Thm.~11.3.7]{Grochenig_2006_Time}).
\begin{theorem} \label{admwind} Let
 $m$ be a  $v$-moderate weight  satisfying \eqref{M} and $g \in   M^1_{v} \setminus \{0\}$, then
$\|V_gf \|_{L^{p,q}_m}$ is an
equivalent norm for $M^{p,q}_m(\Ren)$.
\end{theorem}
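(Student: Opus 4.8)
The plan is to fix once and for all a Schwartz window $g_0\in\sch(\rd)\setminus\{0\}$, used simultaneously to define $M^{p,q}_m(\rd)$ and $M^1_v(\rd)$, and to prove the two-sided comparison $\|V_g f\|_{L^{p,q}_m}\lesssim\|V_{g_0}f\|_{L^{p,q}_m}\lesssim\|V_g f\|_{L^{p,q}_m}$ for an arbitrary admissible window $g\in M^1_v\setminus\{0\}$. The whole argument rests on two ingredients: the pointwise change-of-window inequality, and the weighted convolution relation $L^{p,q}_m\ast L^1_v\hookrightarrow L^{p,q}_m$, which holds precisely because $m$ is $v$-moderate.

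First I would establish the pointwise estimate: for windows $g,\gamma,\psi$ with $\langle\gamma,g\rangle\neq0$ and $f$ in the relevant class,
\[
|V_\psi f(w)|\le\frac{1}{|\langle\gamma,g\rangle|}\bigl(|V_g f|\ast|V_\psi\gamma|\bigr)(w),\qquad w\in\rdd.
\]
This comes from the weak inversion formula $f=\langle\gamma,g\rangle^{-1}\int_{\rdd}V_g f(z)\,\pi(z)\gamma\,dz$: pairing both sides against $\pi(w)\psi$, using that $|\langle\pi(z)\gamma,\pi(w)\psi\rangle|=|V_\psi\gamma(w-z)|$ by \eqref{eq: composizione pi}, and bringing the absolute value inside the integral, the phase factors disappear and the stated convolution bound follows.

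Now I would run the two directions. For the first, take $\psi=g$ and $\gamma=g_0$ (so $\langle g_0,g_0\rangle=\|g_0\|_2^2\neq0$), obtaining $|V_g f|\le\|g_0\|_2^{-2}\,|V_{g_0}f|\ast|V_g g_0|$; weighted Young's inequality then gives $\|V_g f\|_{L^{p,q}_m}\le C\|g_0\|_2^{-2}\|V_{g_0}f\|_{L^{p,q}_m}\|V_g g_0\|_{L^1_v}$, and the last factor is finite since, by the symmetry $|V_g g_0(z)|=|V_{g_0}g(-z)|$ and the evenness of $v$, one has $\|V_g g_0\|_{L^1_v}=\|V_{g_0}g\|_{L^1_v}=\|g\|_{M^1_v}<\infty$ exactly because $g\in M^1_v$. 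For the reverse direction I exchange the roles, taking $\psi=g_0$, $\gamma=g$ and reference window $g$ (using $\|g\|_2^2\neq0$ since $g\in M^1_v\subset\lrd$ is nonzero), which yields $\|V_{g_0}f\|_{L^{p,q}_m}\le C\|g\|_2^{-2}\|g\|_{M^1_v}\|V_g f\|_{L^{p,q}_m}$. Combining the two estimates gives the equivalence of norms, and in particular independence of the space from the chosen $M^1_v$ window.

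The main obstacle is not the norm bookkeeping but the justification of these manipulations when $g$ is merely in $M^1_v$ rather than in $\sch(\rd)$: one must make sense of $V_g f=\langle f,\pi(z)g\rangle$ through the duality between $M^1_v$ and its dual $M^\infty_{1/v}$ (checking that the candidate elements, lying a priori in $M^{p,q}_m$, embed into $M^\infty_{1/v}$ under the standing assumptions on $m,v$), and verify the inversion formula in the weak sense at this level of generality by density of $\sch(\rd)$ and a limiting argument. The single point that must be secured beforehand is the finiteness $\|V_{g_0}g\|_{L^1_v}<\infty$ for $g\in M^1_v$, equivalently the self-consistency of the $M^1_v$ norm under change of Schwartz window; this in turn follows from the very same pointwise inequality specialized to $p=q=1$ and $m=v$.
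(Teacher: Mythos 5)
Your proposal is correct and takes essentially the same approach as the paper: the paper proves this theorem by citation to Gr\"ochenig's Theorem 11.3.7, and your argument --- the pointwise change-of-window bound $|V_\psi f|\le |\langle \gamma,g\rangle|^{-1}\,|V_g f|\ast|V_\psi \gamma|$ obtained from the weak inversion formula, combined with the weighted Young relation $L^{p,q}_m\ast L^1_v\hookrightarrow L^{p,q}_m$ valid for $v$-moderate $m$ --- is precisely the proof given in that reference. The technical points you correctly flag (making sense of $V_g f$ through the $M^1_v$--$M^\infty_{1/v}$ duality, the embedding $M^{p,q}_m\hookrightarrow M^\infty_{1/v}$, and the finiteness $\|V_{g_0}g\|_{L^1_v}=\|g\|_{M^1_v}<\infty$ via the symmetry $|V_g g_0(z)|=|V_{g_0}g(-z)|$ and evenness of $v$) are exactly the ones handled there, so nothing essential is missing.
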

Hence, given any $g \in M^1_v
(\rd )$ and $f\in\Mmpq $ we have the norm equivalence
\begin{equation}\label{normwind}
\|f\|_{\Mmpq } \asymp \|V_{g}f \|_{\Lmpq }.
\end{equation}
The previous equivalence will be heavily exploited in this work.
We  recall the inversion formula for
the STFT (see  \cite[Proposition 11.3.2]{Grochenig_2001_Foundations}): assume $g\in M^{1}_v(\rd)\setminus\{0\}$,
$f\in M^{p,q}_m(\rd)$, with $m$ satisfying \eqref{M} then
\begin{equation}\label{invformula}
f=\frac1{\|g\|_2^2}\int_{\R^{2d}} V_g f(z) \pi (z)  g\, dz \, ,
\end{equation}
and the  equality holds in $M^{p,q}_m(\rd)$.\par
The adjoint operator of $V_g$,  defined by
$$V_g^\ast F(t)=\intrdd F(z)  \pi (z) g dz \, ,
$$
maps the Banach space $L^{p,q}_m(\rdd)$ into $M^{p,q}_m(\rd)$. In particular, if $F=V_g f$ the inversion formula \eqref{invformula} reads
\begin{equation}\label{treduetre}
{\rm Id}_{M^{p,q}_m}=\frac 1 {\|g\|_2^2} V_g^\ast V_g.
\end{equation}

\noindent
\textbf{Wiener Amalgam Spaces.} Fix $g\in \cS(\rd) \setminus \left\{ 0 \right\} $. Consider \emph{even} weight functions $u,w$ on $\rd$ satisfying \eqref{M}. Then the Wiener amalgam space $W(\Fur L^p_u,L^q_w)(\rd)$ is the space of distributions $f\in\cS'(\rd)$ such that
\[
\|f\|_{W(\Fur L^p_u,L^q_w)(\rd)}:=\left(\int_{\Ren}
\left(\int_{\Ren}|V_gf(x,\o)|^p u^p(\o)\,
d\o\right)^{q/p} w^q(x)d x\right)^{1/q}<\infty  \,
\]
(obvious modifications for $p=\infty$ or $q=\infty$).
Using the fundamental identity of \tfa\, \eqref{FI}, we can write $|V_g f(x,\o)|=|V_{\hat g} \hat f(\o,-x)| = |\mathcal F (\hat f \, T_\o \overline{\hat g}) (-x)|$  and (recall $u(x)=u(-x)$)
$$
\| f \|_{{M}^{p,q}_{u\otimes w}} = \left( \int_{\rd} \| \hat f \ T_{\o} \overline{\hat g} \|_{\cF L^p_u}^q w^q(\o) \ d \o \right)^{1/q}
= \| \hat f \|_{W(\cF L_u^p,L_w^q)}.
$$
Hence the Wiener amalgam spaces under our consideration are simply the image under \ft\, of modulation spaces
\begin{equation}\label{W-M}
\cF ({M}^{p,q}_{u\otimes w})=W(\cF L_u^p,L_w^q).
\end{equation}
Using the relation \eqref{W-M} and Theorem \ref{admwind} one can easily infer the following issue.
\begin{theorem}\label{admwindW}
Let $w_i$ be a $v_i$-moderate weight on $\rd$, $i=1,2$ satisfying \eqref{M} and $g\in W(\cF L^1_{w_1},L^1_{w_2})$. Then $\|\| V_g f(x,\cdot)\|_{L^p_{w_2}}\|_{L^q_{w_1}}$ is an equivalent norm for $f\in W(\cF L^p_{w_1},L^q_{w_2})$. In other words, the class of admissible windows for $f\in W(\cF L^p_{w_1},L^q_{w_2})$ can be extended to $W(\cF L^1_{w_1},L^1_{w_2})$. 
\end{theorem}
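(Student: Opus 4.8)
The plan is to transport the statement to the modulation space setting, where the analogous window-extension result (Theorem \ref{admwind}) is already available, apply it there, and then transport the conclusion back. The two bridges I would use are the identity \eqref{W-M}, which identifies the Wiener amalgam spaces under consideration with Fourier images of modulation spaces, and the fundamental identity of time-frequency analysis \eqref{FI}; apart from these, the argument is essentially a careful bookkeeping of weights, with no genuine estimates required.

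First I would rewrite the candidate expression $\|\,\|V_gf(x,\cdot)\|_{L^p_{w_2}}\|_{L^q_{w_1}}$ in terms of $\hat f$ and the transformed window $\hat g$, exactly mimicking the derivation of \eqref{W-M} given in the excerpt but now carried out for a general (non-Schwartz) window $g$. Using \eqref{FI} in the form $|V_gf(x,\omega)|=|V_{\hat g}\hat f(\omega,-x)|$ and performing the change of variables $(x,\omega)\mapsto(\omega,-x)$, one converts this expression into a mixed-norm $L^{p,q}_m$-quantity of $V_{\hat g}\hat f$ for the appropriate tensor weight $m$ on $\rdd$ built from $w_1$ and $w_2$; here the evenness of $w_1$ and $w_2$ is precisely what ensures that the change of variables leaves the weights undisturbed. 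In other words, the candidate expression equals the modulation-space norm $\|\hat f\|_{M^{p,q}_m}$ computed with the window $\hat g$. On the other hand, \eqref{W-M} gives that the genuine Wiener amalgam norm equals the same $\|\hat f\|_{M^{p,q}_m}$ but evaluated with a standard Schwartz window. Thus the whole problem reduces to showing that these two modulation-space norms of $\hat f$ — one with window $\hat g$, one with a standard window — are equivalent.

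This last reduction is exactly the content of Theorem \ref{admwind}, provided $\hat g$ belongs to the admissible class $M^1_v$ with $v$ a submultiplicative weight moderating $m$. Since $w_i$ is $v_i$-moderate, the tensor weight $m$ is $(v_1\otimes v_2)$-moderate, so $v=v_1\otimes v_2$ is the correct majorant; and applying \eqref{W-M} once more with $p=q=1$ shows that the hypothesis $g\in W(\cF L^1_{w_1},L^1_{w_2})$ translates precisely into the $M^1$-type admissibility required for $\hat g$. Feeding this into Theorem \ref{admwind} yields the sought norm equivalence for $\hat f$, and reading it back through the identifications of the previous paragraph delivers the equivalence for $f$. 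The one point demanding care — and the only place the argument could go wrong — is the weight accounting: one must verify that the Fourier transform sends $W(\cF L^p_{w_1},L^q_{w_2})$ to the correct tensor-weighted modulation space, that the moderating majorant $v_1\otimes v_2$ is correctly matched to the hypotheses of Theorem \ref{admwind}, and that the evenness of $w_1,w_2$ legitimizes the change of variables from \eqref{FI} without introducing any spurious weight distortion. Once these identifications are pinned down, the equivalence follows immediately.
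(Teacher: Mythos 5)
Your proposal follows the paper's own proof exactly: the paper disposes of Theorem \ref{admwindW} in a single line preceding the statement, citing the identification \eqref{W-M} (itself obtained from \eqref{FI} and the evenness of the weights) together with the window-extension Theorem \ref{admwind} --- which is precisely your transference scheme of rewriting the candidate expression as a modulation-space norm of $\hat f$ with window $\hat g$, applying the window-extension result there, and reading the equivalence back.

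One point where your bookkeeping claim is stronger than what the computation actually yields: you assert that the hypothesis $g\in W(\cF L^1_{w_1},L^1_{w_2})$ ``translates precisely'' into the admissibility required by Theorem \ref{admwind}. Applying \eqref{W-M} with $p=q=1$ gives $\cF^{-1}g\in M^1_{w_1\otimes w_2}$, i.e.\ membership in $M^1_m$ for the \emph{moderate} weight $m=w_1\otimes w_2$, whereas Theorem \ref{admwind} requires the window to lie in $M^1_v$ for the \emph{submultiplicative majorant} $v=v_1\otimes v_2$. Since moderateness forces $w_i\lesssim v_i$ (set $z_2=0$ in the defining inequality $w_i(z_1+z_2)\le Cv_i(z_1)w_i(z_2)$), the inclusion runs $M^1_{v_1\otimes v_2}\subseteq M^1_{w_1\otimes w_2}$, so the membership you obtain is in general \emph{weaker} than what Theorem \ref{admwind} demands; the two coincide exactly when each $w_i$ is (equivalent to) a submultiplicative weight, in which case one may take $v_i=w_i$. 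This slippage is inherited from the statement of Theorem \ref{admwindW} itself, which phrases the window class with the $w_i$ rather than the $v_i$, and it is harmless in every application made in the paper (e.g.\ in the proof of Theorem \ref{thm:almost diag}, where $w_1=1$ and $w_2=v\circ\mathcal{B}_{\tau}$ are themselves submultiplicative); but as written, your step ``translates precisely'' should either carry the standing assumption that the $w_i$ are submultiplicative or replace the window class by $W(\cF L^1_{v_1},L^1_{v_2})$.
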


\subsection{$\tau$-Pseudodifferential Operators}
In the spirit of the time-frequency-oriented presentation given in
\cite{Grochenig_2001_Foundations}, let us introduce the $\tau$-pseudodifferential operators
by means of superposition of time-frequency shifts, i.e.
\begin{equation}
\opt  \left(\sigma\right)f\left(x\right) = \int_{\mathbb{R}^{2d}}\hat{\sigma}\left(\omega,u\right)e^{-2 \pi i \left(1-\tau\right)\omega u}\left(T_{-u}M_{\omega}f\right)\left(x\right){d}u{d}\omega,\qquad x\in\mathbb{R}^{d},\label{opt tfs}
\end{equation}
for any $\tau\in\left[0,1\right]$. The symbol $\sigma$ and the function
$f$ belong to suitable function spaces, to be determined in order
for the previous expression to make sense. For instance, with obvious
modifications to \cite[Lem. 14.3.1]{Grochenig_2001_Foundations}, we immediately get that $\opt  \left(\sigma\right)$
maps $\mathcal{S}\left(\mathbb{R}^{d}\right)$ to $\mathcal{S}'\left(\mathbb{R}^{2d}\right)$
whenever $\sigma\in\mathcal{S}'\left(\mathbb{R}^{2d}\right)$. 

Notice that the representation \eqref{opt tfs} is a well-defined
absolutely convergent integral if $\hat{\sigma}\in L^{1}\left(\mathbb{R}^{2d}\right)$
and $f\in\mathcal{S}\left(\mathbb{R}^{d}\right)$. Under these assumptions,
easy computations allow to retrieve the usual integral form of $\tau$-pseudodifferential
operators, i.e.,
\begin{flalign*}
\opt  \left(\sigma\right)f\left(x\right) & =\int_{\mathbb{R}^{2d}}e^{2\pi i\left(x-y\right)\omega}\sigma\left(\left(1-\tau\right)x+\tau y,\omega\right)f\left(y\right){d}y{d}\omega.
\end{flalign*}

We finally aim to represent $\opt (\sigma)$ as an integral operator
of the form 
\[
\opt  \left(\sigma\right)f\left(x\right)=\int_{\mathbb{R}^{2d}}k\left(x,y\right)f\left(y\right){d}y.
\]
Let us introduce the operator $\mathfrak{T}_{\tau}$ acting on functions
on $\mathbb{R}^{2d}$ as
\[
\mathfrak{T}_{\tau}F\left(x,y\right) = F\left(x+\tau y,x-\left(1-\tau\right)y\right),\qquad\mathfrak{T}_{\tau}^{-1}F\left(x,y\right)=F\left(\left(1-\tau\right)x+\tau y,x-y\right),
\]
and denote by $\mathcal{F}_{i}$, $i=1,2$, the partial Fourier
transform with respect to the $i$-th $d-$dimensional variable (it is then clear that $\mathcal{F}=\mathcal{F}_{1}\mathcal{F}_{2}$).
Notice that 
\[
k\left(x,y\right)=\mathfrak{T}_{\tau}^{-1}\mathcal{F}_{2}^{-1}\sigma\left(x,y\right)=\mathcal{F}_{1}^{-1}\hat{\sigma}\left(\left(1-\tau\right)x+\tau y,y-x\right).
\]

Since the operators $\mathfrak{T}_{\tau}$ and $\mathcal{F}_{i}$
are continuous bijections on $\mathcal{S}\left(\mathbb{R}^{2d}\right)$,
the kernel $k$ is well-defined (as a tempered distribution) also
for symbols in $\mathcal{S}'\left(\mathbb{R}^{2d}\right)$ and we
can finally recover the representation by duality given in the Introduction.
We resume these remarks in the following result. 
\begin{prop}{\label{optker}}
	For any symbol $\sigma\in\mathcal{S}'\left(\mathbb{R}^{2d}\right)$
	and any real $\tau\in\left[0,1\right]$, the map $\opt (\sigma) \,:\,\mathcal{S}\left(\mathbb{R}^{d}\right)\rightarrow\mathcal{S}\left(\mathbb{R}^{d}\right)$
	is defined as integral operator with distributional kernel 
	\[
	k=\mathfrak{T}_{\tau}^{-1}\mathcal{F}_{2}^{-1}\sigma\in\mathcal{S}'\left(\mathbb{R}^{2d}\right),
	\]
	meaning that, for any $f,g\in\mathcal{S}\left(\mathbb{R}^{d}\right)$,
	\[
	\left\langle \opt  \left(\sigma\right)f,g\right\rangle =\left\langle k,g\otimes\overline{f}\right\rangle .
	\]
	In particular, since the representation 
	\[
	W_{\tau}\left(f,g\right)\left(x,\omega\right)=\mathcal{F}_{2}\mathfrak{T}_{\tau}\left(f\otimes\overline{g}\right)\left(x,\omega\right)
	\]
	holds for $f,g\in\mathcal{S}\left(\mathbb{R}^{d}\right)$, we have
	\[
	\left\langle \opt  \left(\sigma\right)f,g\right\rangle =\left\langle \sigma,W_{\tau}\left(g,f\right)\right\rangle .
	\]
\end{prop}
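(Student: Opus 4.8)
The plan is to reduce everything to the case of a smooth symbol, where all identities are honest convergent integrals, and then to pass to a general $\sigma\in\mathcal{S}'(\mathbb{R}^{2d})$ by a continuity/density argument. First I would record the two structural facts. The map $\mathfrak{T}_{\tau}$ is a real linear change of variables whose underlying substitution $(x,y)\mapsto(x+\tau y,\,x-(1-\tau)y)$ has determinant $(-1)^{d}$; hence $\mathfrak{T}_{\tau}$ is measure preserving, its inverse is exactly the stated change of variables, and it is a topological isomorphism of $\mathcal{S}(\mathbb{R}^{2d})$. The partial Fourier transform $\mathcal{F}_{2}$ (and $\mathcal{F}_{2}^{-1}$) is likewise a topological isomorphism of $\mathcal{S}(\mathbb{R}^{2d})$, so the composition $\mathfrak{T}_{\tau}^{-1}\mathcal{F}_{2}^{-1}$ is one too, and by transposition it extends to a continuous bijection of $\mathcal{S}'(\mathbb{R}^{2d})$. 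This already yields that $k=\mathfrak{T}_{\tau}^{-1}\mathcal{F}_{2}^{-1}\sigma$ is a well-defined tempered distribution for every $\sigma\in\mathcal{S}'(\mathbb{R}^{2d})$.

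Next I would prove the Wigner identity $W_{\tau}(f,g)=\mathcal{F}_{2}\mathfrak{T}_{\tau}(f\otimes\overline{g})$ for $f,g\in\mathcal{S}(\mathbb{R}^{d})$ by a one-line substitution: applying $\mathfrak{T}_{\tau}$ to $f\otimes\overline{g}$ produces $f(x+\tau y)\overline{g(x-(1-\tau)y)}$, and taking $\mathcal{F}_{2}$ in the variable $y\mapsto\omega$ reproduces exactly the integral \eqref{tauwig}. For the kernel identity I would start from the absolutely convergent representation valid when $\hat{\sigma}\in L^{1}(\mathbb{R}^{2d})$ and $f\in\mathcal{S}(\mathbb{R}^{d})$, namely the integral form of $\opt(\sigma)$ recalled above. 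Integrating out the frequency variable gives $\opt(\sigma)f(x)=\int_{\mathbb{R}^{d}}k(x,y)f(y)\,dy$ with $k(x,y)=\int_{\mathbb{R}^{d}}e^{2\pi i(x-y)\omega}\sigma((1-\tau)x+\tau y,\omega)\,d\omega=\mathcal{F}_{2}^{-1}\sigma((1-\tau)x+\tau y,\,x-y)$, which is precisely $\mathfrak{T}_{\tau}^{-1}\mathcal{F}_{2}^{-1}\sigma(x,y)$. Pairing against $g\in\mathcal{S}(\mathbb{R}^{d})$ and applying Fubini then yields $\langle\opt(\sigma)f,g\rangle=\langle k,g\otimes\overline{f}\rangle$ in this smooth case.

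To obtain the weak form I would transport the two operators across the bracket. Since $\mathfrak{T}_{\tau}$ is real and measure preserving, it is orthogonal on $L^{2}$, so its adjoint is $\mathfrak{T}_{\tau}^{-1}$; similarly $\mathcal{F}_{2}$ is unitary, so the adjoint of $\mathcal{F}_{2}^{-1}$ is $\mathcal{F}_{2}$. Consequently, for smooth $\sigma$,
\[
\langle k,g\otimes\overline{f}\rangle=\langle \mathfrak{T}_{\tau}^{-1}\mathcal{F}_{2}^{-1}\sigma,\,g\otimes\overline{f}\rangle=\langle \mathcal{F}_{2}^{-1}\sigma,\,\mathfrak{T}_{\tau}(g\otimes\overline{f})\rangle=\langle \sigma,\,\mathcal{F}_{2}\mathfrak{T}_{\tau}(g\otimes\overline{f})\rangle=\langle \sigma,\,W_{\tau}(g,f)\rangle,
\]
the final equality being the Wigner identity with $f,g$ interchanged. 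To remove the restriction $\hat\sigma\in L^{1}$, I would then invoke the weak-$*$ density of $\mathcal{S}(\mathbb{R}^{2d})$ in $\mathcal{S}'(\mathbb{R}^{2d})$ together with the weak-$*$ continuity of the maps $\sigma\mapsto\langle\mathfrak{T}_{\tau}^{-1}\mathcal{F}_{2}^{-1}\sigma,\,g\otimes\overline{f}\rangle$ and $\sigma\mapsto\langle\sigma,W_{\tau}(g,f)\rangle$ (the former because $\mathfrak{T}_{\tau}^{-1}\mathcal{F}_{2}^{-1}$ is the transpose of a continuous operator on $\mathcal{S}$, the latter because $W_{\tau}(g,f)\in\mathcal{S}(\mathbb{R}^{2d})$ is a fixed test function); since the identities hold on the dense subclass and every term depends continuously on $\sigma$, they persist for arbitrary $\sigma\in\mathcal{S}'(\mathbb{R}^{2d})$.

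The step I expect to be most delicate is the transposition in the third paragraph: one must keep in mind that $\langle\cdot,\cdot\rangle$ is the sesquilinear extension of the $L^{2}$ inner product, so the adjoints of $\mathfrak{T}_{\tau}$ and $\mathcal{F}_{2}$ must be computed with respect to that pairing and the complex conjugation inside $g\otimes\overline{f}$ tracked consistently. Here the fact that $\mathfrak{T}_{\tau}$ has real coefficients—so it commutes with conjugation—is what keeps the bookkeeping clean, and performing the computation first in the smooth case (rather than formally in $\mathcal{S}'$) sidesteps any ambiguity, the distributional case being recovered purely by the density argument. The remaining manipulations, namely the change of variables computing $\det\mathfrak{T}_{\tau}$, the Fubini step, and the weak-$*$ limit, are routine.
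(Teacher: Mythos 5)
Your proposal is correct and follows essentially the same route as the paper: the paper likewise derives the kernel $k=\mathfrak{T}_{\tau}^{-1}\mathcal{F}_{2}^{-1}\sigma$ from the absolutely convergent representation for nice symbols, observes that $\mathfrak{T}_{\tau}$ and $\mathcal{F}_{2}$ are continuous bijections of $\mathcal{S}(\mathbb{R}^{2d})$ so that $k$ extends by duality to all $\sigma\in\mathcal{S}'(\mathbb{R}^{2d})$, and obtains $\langle\opt(\sigma)f,g\rangle=\langle\sigma,W_{\tau}(g,f)\rangle$ from the factorization $W_{\tau}(f,g)=\mathcal{F}_{2}\mathfrak{T}_{\tau}\left(f\otimes\overline{g}\right)$. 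The only difference is presentational: you spell out the sesquilinear adjoint bookkeeping and the weak-$*$ density step that the paper leaves implicit, and one could even bypass the density argument by noting that the transposition identities define the distributional extension directly.
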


The issues discussed insofar can be summed up in the following claim, whose proof is an application of the celebrated Schwartz's kernel theorem (see for instance \cite[Theorem 14.3.4]{Grochenig_2001_Foundations}). 

\begin{theorem}\label{optreps}
Let $T\,:\,\mathcal{S}\left(\mathbb{R}^{d}\right)\rightarrow\mathcal{S}'\left(\mathbb{R}^{d}\right)$
be a continuous linear operator. There exist tempered distributions
$k,\sigma,F\in\mathcal{S}'\left(\mathbb{R}^{d}\right)$ and $\tau\in\left[0,1\right]$
such that $T$ admits the following representations:
\begin{enumerate}
	\item[$(i)$] as an integral operator: $\left\langle Tf,g\right\rangle =\left\langle k,g\otimes\overline{f}\right\rangle $
	for any $f,g\in\mathcal{S}\left(\mathbb{R}^{d}\right)$;
	\item[$(ii)$] as a $\tau$-pseudodifferential operator $T=\mathrm{Op}_{\tau}\left(\sigma\right)$
	with symbol $\sigma$;
	\item[$(iii)$] as a superposition (in a weak sense) of time-frequency shifts : $$T=\int_{\mathbb{R}^{2d}}F\left(x,\omega\right)e^{2\left(1-\tau\right)\pi ix\omega}T_{x}M_{\omega}\mathrm{d}x\mathrm{d}\omega.$$
\end{enumerate}
The relations among $k$, $\sigma$ and $F$ are the following:
\[
\sigma=\mathcal{F}_{2}\mathfrak{T}_{\tau}k,\qquad F=\mathcal{I}_{2}\hat{\sigma},
\]
where $\mathcal{I}_{2}$ denotes the reflection in the second $d$-dimensional
variable (i.e. $\mathcal{I}_{2}G\left(x,\omega\right)=G\left(x,-\omega\right)$,
$\left(x,\omega\right)\in\mathbb{R}^{2d}$). 
\end{theorem}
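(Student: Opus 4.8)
The plan is to extract all three representations from a single application of Schwartz's kernel theorem, using Proposition \ref{optker} as the dictionary between the kernel $k$ and the $\tau$-symbol $\sigma$. First I would invoke Schwartz's kernel theorem for the continuous linear map $T:\mathcal{S}(\mathbb{R}^d)\to\mathcal{S}'(\mathbb{R}^d)$: it furnishes a unique distribution $k\in\mathcal{S}'(\mathbb{R}^{2d})$ with $\langle Tf,g\rangle=\langle k,g\otimes\overline{f}\rangle$ for all $f,g\in\mathcal{S}(\mathbb{R}^d)$. This is precisely statement $(i)$ and requires nothing beyond the cited theorem.

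For $(ii)$, I would fix an arbitrary $\tau\in[0,1]$ and define the candidate symbol by $\sigma:=\mathcal{F}_2\mathfrak{T}_\tau k$. Since $\mathfrak{T}_\tau$ and $\mathcal{F}_2$ are continuous bijections of $\mathcal{S}(\mathbb{R}^{2d})$, by transposition they act as continuous bijections of $\mathcal{S}'(\mathbb{R}^{2d})$; hence $\sigma$ is a well-defined tempered distribution and, inverting, $k=\mathfrak{T}_\tau^{-1}\mathcal{F}_2^{-1}\sigma$. At this point Proposition \ref{optker} applies verbatim: it identifies the integral operator with kernel $\mathfrak{T}_\tau^{-1}\mathcal{F}_2^{-1}\sigma$ with $\mathrm{Op}_\tau(\sigma)$, so that $T=\mathrm{Op}_\tau(\sigma)$ and the relation $\sigma=\mathcal{F}_2\mathfrak{T}_\tau k$ holds by construction.

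For $(iii)$ I would work from the time-frequency-shift representation \eqref{opt tfs}, first under the simplifying hypothesis $\hat\sigma\in L^1(\mathbb{R}^{2d})$, so that the integral converges absolutely and Fubini is available. A linear change of variables in the shift index (relabelling the translation so that $T_{-u}M_\omega$ becomes $T_xM_\omega$) converts the weight $\hat\sigma(\omega,u)e^{-2\pi i(1-\tau)\omega u}$ into the phase $e^{2\pi i(1-\tau)x\omega}$ prescribed in $(iii)$ multiplied by a reflected copy of $\hat\sigma$; this amplitude is the sought function $F$, which the statement claims to coincide with $\mathcal{I}_2\hat\sigma$. Finally I would discard the $L^1$ assumption: as $\mathcal{F}_i$, $\mathfrak{T}_\tau$ and $\mathcal{I}_2$ are continuous bijections on $\mathcal{S}$ that extend to $\mathcal{S}'$, both sides depend continuously on $\sigma\in\mathcal{S}'(\mathbb{R}^{2d})$, so the identity persists for general symbols, the integral in $(iii)$ being understood weakly, i.e. tested against $\mathcal{S}(\mathbb{R}^d)$.

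I expect the genuine difficulty to lie not in the algebra but in making every manipulation rigorous at the level of tempered distributions: each equality is first checked on Schwartz data, where the change of variables, Fubini, and Fourier inversion are licit, and only afterwards promoted to $\mathcal{S}'$ by transposing the relevant continuous bijections. The most delicate bookkeeping is the one for $(iii)$, namely tracking the sign conventions and the precise effect of the substitution so that the amplitude produced by the change of variables matches exactly the reflection $F=\mathcal{I}_2\hat\sigma$ with the phase normalization fixed in \eqref{opt tfs}.
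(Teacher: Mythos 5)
Your proposal is correct and coincides with the paper's own (one-line) proof, which is exactly the observation that the theorem summarizes the preceding discussion: Schwartz's kernel theorem yields $(i)$, Proposition \ref{optker} applied to $\sigma:=\mathcal{F}_{2}\mathfrak{T}_{\tau}k$ yields $(ii)$ for every $\tau\in\left[0,1\right]$ (as you note, $\mathfrak{T}_{\tau}$ and $\mathcal{F}_{2}$ are continuous bijections of $\mathcal{S}'\left(\mathbb{R}^{2d}\right)$ for all such $\tau$), and the substitution $u\mapsto-x$ in \eqref{opt tfs} yields $(iii)$, extended to general symbols by the weak interpretation of the integral. Concerning the bookkeeping you rightly flagged as the delicate point: carrying out that substitution produces the amplitude $\hat{\sigma}\left(\omega,-x\right)$ attached to $T_{x}M_{\omega}$, so the relation $F=\mathcal{I}_{2}\hat{\sigma}$ is exact only when the arguments of $F$ are read in the order (modulation index, translation index) — the display in $(iii)$ pairs the two $d$-dimensional slots in the opposite order, a notational wrinkle of the statement itself rather than a gap in your argument.
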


\section{Covariance property of $\tau$-Wigner distribution functions}

The proof of the following lemmas is a matter of computation. 
\begin{lemma}
For $\tau\in\left(0,1\right)$, we define the operator
\begin{equation}\label{Atau}
A_{\tau}\,:\,f\left(t\right)\mapsto\mathcal{I}f\left(\frac{1-\tau}{\tau}t\right),
\end{equation}
where $\mathcal{I}$ is the reflection operator $\left(\mathcal{I}g\left(t\right)=g\left(-t\right)\right)$.
Then, for any $z=(z_1,z_2)\in\mathbb{R}^{2d}$, 
\begin{equation}\label{e1}
\pi\left(z_1,z_2\right)A_{\tau}=A_{\tau}\pi\left(-\frac{1-\tau}{\tau}z_1,-\frac{\tau}{1-\tau}z_2\right),
\end{equation}
\begin{equation}\label{e2}
A_{\tau}\pi\left(z_1,z_2\right)=\pi\left(-\frac{\tau}{1-\tau}z_1,-\frac{1-\tau}{\tau} z_2\right)A_{\tau}.
\end{equation}
\end{lemma}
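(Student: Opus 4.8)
The plan is a direct verification: I would apply each identity to an arbitrary $f\in\mathcal{S}(\mathbb{R}^d)$ and check that the two sides agree pointwise. First I unwind the definition of $A_\tau$. Since the reflection satisfies $\mathcal{I}g(t)=g(-t)$, formula \eqref{Atau} reads
$$A_\tau f(t)=f\left(-\tfrac{1-\tau}{\tau}\,t\right),$$
so $A_\tau$ is a reflection composed with the dilation of factor $\tfrac{1-\tau}{\tau}$. It is useful to record already here that $A_\tau$ is invertible with $A_\tau^{-1}=A_{1-\tau}$, which is immediate once one notes that $\tfrac{1-\tau}{\tau}$ and $\tfrac{\tau}{1-\tau}$ are mutually reciprocal.

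The core of the argument is the way this reflection-dilation intertwines with a time-frequency shift. Using the definition \eqref{eq:kh25}, namely $\pi(z_1,z_2)f(t)=e^{2\pi i z_2 t}f(t-z_1)$, a one-line computation gives
$$A_\tau\bigl[\pi(z_1,z_2)f\bigr](t)=e^{-2\pi i\frac{1-\tau}{\tau}z_2 t}\,f\left(-\tfrac{1-\tau}{\tau}\,t-z_1\right),$$
because applying $A_\tau$ after the shift rescales the argument $t$ in the exponential by $-\tfrac{1-\tau}{\tau}$. On the other hand, evaluating $\pi\bigl(-\tfrac{\tau}{1-\tau}z_1,-\tfrac{1-\tau}{\tau}z_2\bigr)A_\tau f$ at $t$ and substituting $A_\tau f(s)=f(-\tfrac{1-\tau}{\tau}s)$ reproduces exactly the same phase and the same shifted-rescaled argument. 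Matching the two expressions yields \eqref{e2}: the frequency parameter $-\tfrac{1-\tau}{\tau}z_2$ is precisely what is needed to recreate the exponential generated by the dilation, while the spatial parameter $-\tfrac{\tau}{1-\tau}z_1$ absorbs the rescaling inside the argument of $f$.

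Finally, I would read off \eqref{e1} from \eqref{e2} with no further computation. With $\mathcal{U}_\tau$ the matrix of \eqref{UT}, the identity \eqref{e2} says exactly that conjugation by $A_\tau$ acts on the phase-space variable as $\mathcal{U}_\tau$, i.e. $A_\tau\pi(z)A_\tau^{-1}=\pi(\mathcal{U}_\tau z)$. Substituting $z\mapsto\mathcal{U}_\tau^{-1}z$ and using $\mathcal{U}_\tau^{-1}(z_1,z_2)=(-\tfrac{1-\tau}{\tau}z_1,-\tfrac{\tau}{1-\tau}z_2)$ turns \eqref{e2} into \eqref{e1}; alternatively one simply repeats the symmetric pointwise computation. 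I expect no genuine obstacle: the whole proof is bookkeeping of the phase factor $e^{2\pi i z_2 t}$ and of the argument rescaling, and the only point demanding care is to keep the reciprocal factors $\tfrac{1-\tau}{\tau}$ and $\tfrac{\tau}{1-\tau}$ and their signs straight throughout.
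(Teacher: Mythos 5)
Your proposal is correct and is exactly what the paper intends: the paper gives no written proof beyond the remark that the identities follow ``by a matter of computation,'' and your direct pointwise verification of \eqref{e2} (unwinding $A_\tau f(t)=f\bigl(-\tfrac{1-\tau}{\tau}t\bigr)$ against the definition $\pi(z_1,z_2)f(t)=e^{2\pi i z_2 t}f(t-z_1)$) carries out precisely that computation with all signs and reciprocal factors handled correctly. Your shortcut for \eqref{e1} is also sound, since $A_\tau^{-1}=A_{1-\tau}$ makes \eqref{e2} read $A_\tau\pi(z)A_\tau^{-1}=\pi(\mathcal{U}_\tau z)$, and the substitution $z\mapsto\mathcal{U}_\tau^{-1}z$ with $\mathcal{U}_\tau^{-1}=\mathcal{U}_{1-\tau}$ yields \eqref{e1} exactly.
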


\begin{lemma}[{\cite[Lemma 6.2]{BoggiattoCubo2010}}]
The $\tau$-Wigner distribution admits a representation as short-time Fourier transform for $\tau\in\left(0,1\right)$:

\begin{equation}
W_{\tau}\left(f,g\right)\left(x,\omega\right)=\frac{1}{\tau^{d}}e^{2\pi i\frac{1}{\tau}x\omega}V_{A_{\tau}g}f\left(\frac{1}{1-\tau}x,\frac{1}{\tau}\omega\right).\label{eq:WDF STFT}
\end{equation}
\end{lemma}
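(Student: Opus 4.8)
The plan is to obtain \eqref{eq:WDF STFT} by a direct change of variables in the defining integral \eqref{tauwig} for $W_{\tau}(f,g)$ and then to recognize the resulting integral as a short-time Fourier transform against the rescaled-and-reflected window $A_{\tau}g$. Since $\tau\in(0,1)$, the scalar $\tau$ is nonzero, so the substitution below is a genuine invertible linear change of variables; the computation is valid for $f,g\in\cS(\rd)$, where \eqref{tauwig} converges absolutely.

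First I would set $t=x+\tau y$ in \eqref{tauwig}, so that $y=(t-x)/\tau$ and $dy=\tau^{-d}\,dt$; this produces the prefactor $\tau^{-d}$. Under this substitution the first argument of $f$ becomes simply $t$; the exponent splits as $-2\pi i y\omega=-2\pi i t\tfrac{\omega}{\tau}+2\pi i x\tfrac{\omega}{\tau}$, so that the factor $e^{2\pi i\frac{1}{\tau}x\omega}$ can be pulled out of the integral; and a short computation shows that the second argument of $g$ transforms as
\[
x-(1-\tau)y \;=\; \frac{x-(1-\tau)t}{\tau}.
\]
At this stage we arrive at
\[
W_{\tau}(f,g)(x,\omega)=\frac{1}{\tau^{d}}e^{2\pi i\frac{1}{\tau}x\omega}\int_{\mathbb{R}^{d}}f(t)\,\overline{g\!\left(\tfrac{x-(1-\tau)t}{\tau}\right)}\,e^{-2\pi i t\frac{\omega}{\tau}}\,dt.
\]

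The key step is to identify the remaining integral as an STFT. Comparing with \eqref{STFTdef}, the frequency variable is $\omega/\tau$, and the role of the translated window $\overline{h(t-u)}$ is played by $\overline{g((x-(1-\tau)t)/\tau)}$. Recalling the definition \eqref{Atau} of $A_{\tau}$, namely $A_{\tau}g(s)=g(-\tfrac{1-\tau}{\tau}s)$, I would verify directly that with the choice $u=x/(1-\tau)$ one has
\[
(A_{\tau}g)(t-u)=g\!\left(-\tfrac{1-\tau}{\tau}\bigl(t-\tfrac{x}{1-\tau}\bigr)\right)=g\!\left(\tfrac{x-(1-\tau)t}{\tau}\right),
\]
which exactly matches the window appearing above. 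Hence the integral equals $V_{A_{\tau}g}f\bigl(\tfrac{x}{1-\tau},\tfrac{\omega}{\tau}\bigr)$, and \eqref{eq:WDF STFT} follows.

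There is no genuine analytic obstacle here: the argument is purely a change of variables. The only point requiring care is the bookkeeping in the last step, namely correctly reading off that the window is the dilated reflection $A_{\tau}g$ rather than $g$ itself, and matching both the spatial argument $x/(1-\tau)$ and the frequency argument $\omega/\tau$ with the correct signs and scalings. The intertwining relations \eqref{e1}--\eqref{e2} from the preceding lemma provide an independent consistency check on precisely these dilation factors.
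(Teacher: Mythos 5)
Your computation is correct: the substitution $t=x+\tau y$, the splitting of the phase, and the identification of the window via $(A_{\tau}g)(t-\tfrac{x}{1-\tau})=g\bigl(\tfrac{x-(1-\tau)t}{\tau}\bigr)$ all check out against the definitions \eqref{tauwig}, \eqref{STFTdef} and \eqref{Atau}. This is essentially the same argument the paper has in mind, since it dismisses the proof as ``a matter of computation'' and defers to \cite[Lemma 6.2]{BoggiattoCubo2010}, whose verification is exactly this change of variables.
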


Let us define the linear map $\mathcal{B}_{\tau}\,:\,\mathbb{R}^{2d}\rightarrow\mathbb{R}^{2d}$
as
\begin{equation}\label{BTlin}
\mathcal{B}_{\tau}\left(x\right)=\mathcal{B}_{\tau}\left(x_{1},x_{2}\right)=\left(\frac{1}{1-\tau}x_{1},\frac{1}{\tau}x_{2}\right)=\left(\begin{array}{cc}
\frac{1}{1-\tau}I_{d\times d} & 0_{d\times d}\\
0_{d\times d} & \frac{1}{\tau}I_{d\times d}
\end{array}\right)\left(\begin{array}{c}
x_{1}\\
x_{2}
\end{array}\right).
\end{equation}
Under this convention, Eq. (\ref{eq:WDF STFT}) becomes
\begin{equation}\label{e3}
W_{\tau}\left(f,g\right)\left(x,\omega\right)=\frac{1}{\tau^{d}}e^{2\pi i\frac{1}{\tau}x\omega}V_{A_{\tau}g}f\left(\mathcal{B}_{\tau}\left(x,\omega\right)\right).
\end{equation}
As it is customary, we use the symbol $\mathcal{B}_{\tau}$ also to
denote the matrix 
\begin{equation}\label{BT}
\mathcal{B}_{\tau}=\left(\begin{array}{cc}
\frac{1}{1-\tau}I_{d\times d} & 0_{d\times d}\\
0_{d\times d} & \frac{1}{\tau}I_{d\times d}
\end{array}\right).
\end{equation}

\begin{prop}\label{prop:covtauWDF}
For $\tau\in\left(0,1\right)$, $z=(z_1,z_2),$ $w=(w_1,w_2)\in\mathbb{R}^{2d}$, set 
\begin{equation}\label{Ttau}
\mathcal{T}_{\tau}\left(z,w\right)=\left(\begin{array}{c}
\left(1-\tau\right)z_{1}+\tau w_{1}\\
\tau z_{2}+\left(1-\tau\right)w_{2}
\end{array}\right)\quad z=(z_1,z_2),\,w=(w_1,w_2)\in\rdd.
\end{equation}
Then, for $f,g\in\mathcal{S}\left(\mathbb{R}^{d}\right)$ we have
\begin{equation}\label{CovtauWD}
W_{\tau}\left(\pi(z)f,\pi(w)g\right)\phas\!=\!c_\tau  M_{J(z-w)} T_{\mathcal{T}_{\tau}\left(z,w\right)} W_{\tau}\left(f,g\right)\phas
\end{equation}
where the phase factor $c_\tau$ is given by
\begin{equation}\label{ctau}
c_\tau=e^{2\pi i[(z_1-w_1)(\tau z_2+(1-\tau)w_2)]} .
\end{equation}
Equivalently, formula \eqref{CovtauWD} reads
\begin{equation}W_{\tau}\left(\pi\left(z\right)f,\pi\left(w\right)g\right)(u)
=c_{\tau}\cdot\pi\left(\mathcal{T}_{\tau}\left(z,w\right),J\left(z-w\right)\right)W_{\tau}\left(f,g\right)(u),\quad u\in\rdd,\label{CovtauWDcompact}
\end{equation}
where the phase factor $c_{\tau}$ is defined in \eqref{ctau}.
\end{prop}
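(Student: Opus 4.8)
The plan is to reduce the statement to the short-time Fourier transform representation of the $\tau$-Wigner distribution recorded in \eqref{e3}, using the commutation relation \eqref{e2} to push the phase-space shift on the window $g$ through the operator $A_\tau$. Applying \eqref{e3} to the pair $\pi(z)f,\ \pi(w)g$ gives
\[
W_\tau\!\left(\pi(z)f,\pi(w)g\right)(x,\omega)=\tfrac{1}{\tau^{d}}\,e^{2\pi i\frac{1}{\tau}x\omega}\,V_{A_\tau(\pi(w)g)}\!\left(\pi(z)f\right)\!\left(\mathcal{B}_\tau(x,\omega)\right),
\]
so the whole problem becomes a matter of understanding how the STFT transforms when \emph{both} its argument and its window are shifted, and then reading \eqref{e3} backwards.

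First I would rewrite the window: by \eqref{e2}, $A_\tau\pi(w)g=\pi(w')A_\tau g$ with $w'=\bigl(-\tfrac{\tau}{1-\tau}w_1,-\tfrac{1-\tau}{\tau}w_2\bigr)$. Next, from the composition law \eqref{eq: composizione pi} I would record the two elementary covariance identities $V_g(\pi(a)f)(u)=e^{-2\pi i a_1(u_2-a_2)}V_gf(u-a)$ and $V_{\pi(b)g}f(u)=e^{2\pi i u_1 b_2}V_gf(u+b)$, and compose them with $a=z$, $b=w'$, evaluated at $u=\mathcal{B}_\tau(x,\omega)$. The crucial observation is that the net argument $\mathcal{B}_\tau(x,\omega)+w'-z$ collapses, block by block, to $\bigl(\tfrac{1}{1-\tau}(x-((1-\tau)z_1+\tau w_1)),\ \tfrac{1}{\tau}(\omega-(\tau z_2+(1-\tau)w_2))\bigr)=\mathcal{B}_\tau\bigl((x,\omega)-\mathcal{T}_\tau(z,w)\bigr)$, the scaling $\mathcal B_\tau$ being exactly what makes the first block annihilate the $z_1,w_1$ dependence and the second produce the frequency offset. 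Reading \eqref{e3} in reverse at the point $(x,\omega)-\mathcal{T}_\tau(z,w)$ then turns the shifted STFT back into $W_\tau(f,g)$ evaluated there, i.e.\ into the translate $T_{\mathcal{T}_\tau(z,w)}W_\tau(f,g)$, which is the content of the $\mathcal T_\tau$ in \eqref{CovtauWD}.

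A perhaps more transparent alternative is to argue directly from the defining integral \eqref{tauwig}: inserting $\pi(z)f(t)=e^{2\pi i z_2 t}f(t-z_1)$, $\pi(w)g(t)=e^{2\pi i w_2 t}g(t-w_1)$ and performing the single change of variables $y\mapsto y+(z_1-w_1)$ aligns the arguments $x+\tau y-z_1$ and $x-(1-\tau)y-w_1$ with those of $W_\tau(f,g)$ at the shifted point $(x,\omega)-\mathcal{T}_\tau(z,w)$; here the weighted average $(1-\tau)z_1+\tau w_1$ surfaces automatically as the spatial shift and $\tau z_2+(1-\tau)w_2$ as the frequency shift.

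Whichever route I take, the only genuine work is the phase bookkeeping, and this is where I expect the single real (though routine) obstacle to lie. Every leftover exponential must be sorted into two groups: the terms linear in the evaluation point $(x,\omega)$, which must assemble into $e^{2\pi i J(z-w)\cdot(x,\omega)}=M_{J(z-w)}$ via $J(z-w)=(z_2-w_2,-(z_1-w_1))$, and a remaining constant phase that must collapse precisely to $c_\tau=e^{2\pi i(z_1-w_1)(\tau z_2+(1-\tau)w_2)}$ of \eqref{ctau}. In the direct computation this constant is generated as the cross term between the $y$-shift $z_1-w_1$ and the frequency offset $[\mathcal{T}_\tau]_2=\tau z_2+(1-\tau)w_2$ when $e^{-2\pi i y\omega}$ is re-expanded after the substitution; in the STFT route it emerges from the interplay of the $e^{2\pi i\frac{1}{\tau}x\omega}$ prefactors in \eqref{e3} (which supply the canceling $\tau^{\pm d}$) with the covariance phases. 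This is the one place where a sign error or a $\tau\leftrightarrow 1-\tau$ slip could creep in, so I would verify it against the trivial window-independent check afforded by the $\delta$-symbol example. Once the two groups are identified, \eqref{CovtauWD} follows, and rewriting $M_{J(z-w)}T_{\mathcal{T}_\tau(z,w)}$ as the single phase-space shift $\pi\bigl(\mathcal{T}_\tau(z,w),J(z-w)\bigr)$ yields the compact form \eqref{CovtauWDcompact}.
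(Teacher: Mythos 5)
Your primary route is precisely the paper's own proof: the paper likewise applies the representation \eqref{e3} to the pair $\pi(z)f,\pi(w)g$, uses \eqref{e2} to replace $A_{\tau}\pi(w)g$ by $\pi\left(-\tfrac{\tau}{1-\tau}w_1,-\tfrac{1-\tau}{\tau}w_2\right)A_{\tau}g$, collapses the STFT argument via the composition law \eqref{eq: composizione pi} and commutation \eqref{eq: fundid} to $\mathcal{B}_{\tau}\left((x,\omega)-\mathcal{T}_{\tau}(z,w)\right)$, and then sorts the phases into $M_{J(z-w)}$ and the constant $c_{\tau}$. Your two elementary STFT covariance identities are correct and merely package the paper's step-by-step inner-product manipulations, and the constant phase does reduce to \eqref{ctau} exactly as you predict, so the proposal is sound and essentially identical to the paper's argument (your direct-integral alternative would also work but is not what the paper does).
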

\begin{proof}
Formula (\ref{eq:WDF STFT}) gives 
\[
W_{\tau}\left(\pi(z)f,\pi(w)g\right)\phas=\frac{1}{\tau^{d}}e^{2\pi i\frac{1}{\tau}x\omega}V_{A_{\tau}\pi\left(w\right)g}\left(\pi(z)f\right)\left(\frac{1}{1-\tau}x,\frac{1}{\tau}\omega\right).
\]
For what concerns the short-time Fourier transform above, we can
write, using \eqref{e2} and then \eqref{eq: composizione pi},
\begin{align*}
V_{A_{\tau}\pi(w)g}\left(\pi(z) f\right)&\left(\frac{1}{1-\tau}x,\frac{1}{\tau}\omega\right)=\left\langle \pi(z)f,\pi\left(\frac{1}{1-\tau}x,\frac{1}{\tau}\omega\right)A_{\tau}\pi(w)g\right\rangle \\
&=\left\langle \pi\left(z\right)f,\pi\left(\frac{1}{1-\tau}x,\frac{1}{\tau}\omega\right)\pi\left(-\frac{\tau}{1-\tau}w_1,-\frac{1-\tau}{\tau}w_2\right)A_{\tau}g\right\rangle\\
&=\left\langle \pi\left(z\right)f,e^{-2\pi i\left(\frac{1}{1-\tau}x\right)\left(-\frac{1-\tau}{\tau}w_2\right)}\pi\left(\frac{x-\tau w_1}{1-\tau},\frac{\omega-\left(1-\tau\right)w_2}{\tau}\right)A_{\tau}g\right\rangle \\
&=e^{-2\pi i\frac{1}{\tau}x w_2}\left\langle M_{z_2}T_{z_1}f,M_{\frac{\omega-\left(1-\tau\right)w_2}{\tau}}T_{\frac{x-\tau w_1}{1-\tau}}A_{\tau}g\right\rangle\\
& =e^{-2\pi i\frac{1}{\tau}x w_2}\left\langle f,T_{-z_1}M_{-z_2}M_{\frac{\omega-\left(1-\tau\right)w_2}{\tau}}T_{\frac{x-\tau w_1}{1-\tau}}A_{\tau}g\right\rangle \\
&=e^{-2\pi i\frac{1}{\tau}x w_2}\left\langle f,T_{-z_1}M_{\frac{\omega-\left(1-\tau\right)w_2-\tau w_1}{\tau}}T_{\frac{x-\tau w_1}{1-\tau}}A_{\tau}g\right\rangle .
\end{align*}
The commutation relations for TF-shifts \eqref{eq: fundid} give
\begin{align*}
V_{A_{\tau}\pi\left(w\right)g}\!\!&\left(\pi\left(z\right)f\right)\left(\frac{1}{1-\tau}x,\frac{1}{\tau}\omega\right)\\&=e^{-2\pi i\frac{1}{\tau}xw_2}\left\langle f,e^{-2\pi i\left(-z_1\right)\left(\frac{\omega-\left(1-\tau\right)w_2-\tau z_2}{\tau}\right)}M_{\frac{\omega-\left(1-\tau\right)w_2-\tau z_2}{\tau}}T_{\frac{x-\tau w_2-\left(1-\tau\right)z_1}{1-\tau}}A_{\tau}g\right\rangle \\
&=e^{-2\pi i\frac{1}{\tau}x w_2}e^{-2\pi i\frac{1}{\tau}z_1\left(\omega-\left(1-\tau\right)w_2-\tau z_2\right)}\left\langle f,M_{\frac{\omega-\left(1-\tau\right)w_2-\tau z_2}{\tau}}T_{\frac{x-\tau w_1-\left(1-\tau\right)z_1}{1-\tau}}A_{\tau}g\right\rangle \\
&=e^{-2\pi i\frac{1}{\tau}z w_2}e^{-2\pi i\frac{1}{\tau}z_1\left(\omega-\left(1-\tau\right) w_2-\tau z_2\right)}\\
&\qquad\qquad \times\,\,V_{A_{\tau}g}f\!\left(\!\frac{x-\left[\left(1-\tau\right)z_1+\tau w_1\right]}{1-\tau},\frac{\omega-\left[\tau z_2+\left(1-\tau\right)w_2\right]}{\tau}\!\right).
\end{align*}
Coming back to the original problem, we can write
\begin{align*}
W_{\tau}\left(\pi\left(z\right)f,\pi\left(w\right)g\right)\phas&=\frac{1}{\tau^{d}}e^{2\pi i\frac{1}{\tau}x \omega }e^{-2\pi i\frac{1}{\tau}x w_2}e^{-2\pi i\frac{1}{\tau}z_1\left(\omega-\left(1-\tau\right)w_2-\tau z_2\right)}\\
&\qquad\times\,V_{A_{\tau}g}f\!\left(\!\frac{x-\left[\left(1-\tau\right)z_1+\tau w_1\right]}{1-\tau},\frac{\omega-\left[\tau z_1+\left(1-\tau\right)w_2\right]}{\tau}\!\right).
\end{align*}
Working on the phase factors, we obtain 
\begin{align*}
W_{\tau}&\left(\pi(z)f,\pi(w)g\right)\phas\\
&=e^{-2\pi i\frac{1}{\tau}xw_2} e^{-2\pi i\frac{1}{\tau}z_1\left(\omega-\left(1-\tau\right)w_2-\tau z_2\right)} e^{2\pi i\frac{1}{\tau}x\left(\tau z_2+\left(1-\tau\right) w_2\right)}
e^{2\pi i\frac{1}{\tau}\omega\left(\left(1-\tau\right)z_1+\tau w_1\right)} \\
&\qquad\times e^{ -2\pi i\frac{1}{\tau}\left[\left(1-\tau\right)z_1+\tau w_1\right]\left[\tau z_2+\left(1-\tau\right)w_2\right]} T_{\mathcal{T}_\tau (z,w)}W_{\tau}\left(f,g\right)\phas\\
&=c_\tau M_{J(z-w)} T_{\mathcal{T}_\tau (z,w)}W_{\tau}\left(f,g\right)\phas,
\end{align*}
where $c_\tau$ is defined in \eqref{ctau}. Finally, formula \eqref{CovtauWDcompact} is an easy computation.
\end{proof}
\begin{remark}
(i) Notice that $\mathcal{T}_{\tau}\left(z,w\right)=\mathcal{T}_{1-\tau}\left(w,z\right)$ for every $z,w\in\mathbb{R}^{2d}$. Moreover,  $\mathcal{T}_\tau$ is well-defined for any  $\tau\in\left[0,1\right]$.\\
(ii) The proof of the covariance property heavily relies on the representation
(\ref{eq:WDF STFT}) and on the properties of the operator $A_{\tau}$,
which are both well-defined only when $\tau\in\left(0,1\right)$. We shall see in the sequel that the same formula \eqref{CovtauWDcompact} holds for the end-points $\tau=0$ and $\tau=1$.
\end{remark}

We now state the covariance property for the Rihaczek distribution:
as announced in the remark above,  we can
retrieve it by the naive substitution $\tau=0$ in \eqref{CovtauWD} or, equivalently, \eqref{CovtauWDcompact}.
\begin{prop} \label{prop:covariance 0} Consider  $f,g\in\mathcal{S}\left(\mathbb{R}^{d}\right)$. For $z=(z_1,z_2), w=(w_1,w_2)\in\mathbb{R}^{2d}$,
\begin{equation}\label{CovRD}
W_{0}\left(\pi(z)f,\pi(w)g\right)\phas=c_{0}e^{2\pi i[\phas J(z_1-w_1,z_2-w_2)]}W_{0}\left(f,g\right)\left(x-z_1,\omega-w_2\right),
\end{equation}
where $c_0=e^{2\pi i(z_1-w_1)w_2}$ is a phase factor. \par
Equivalently,  formula \eqref{CovRD} reads
\[
W_{0}\left(\pi\left(z\right)f,\pi\left(w\right)g\right)=c_{0}\cdot\pi\left(\left(z_{1},w_{2}\right),J\left(z-w\right)\right)W_{0}\left(f,g\right).
\]

\end{prop}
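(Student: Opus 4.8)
The plan is to bypass the machinery of Proposition~\ref{prop:covtauWDF}, which degenerates at $\tau=0$ (the operator $A_\tau$ and the representation \eqref{eq:WDF STFT} are well-defined only for $\tau\in(0,1)$), and instead verify the formula by a direct computation that confirms the result of the formal substitution $\tau=0$ announced in the preceding remark. The starting point is the observation that, setting $\tau=0$ in \eqref{tauwig} and performing the change of variables $u=x-y$, the Rihaczek distribution factors as
\[
W_0(f,g)(x,\omega)=e^{-2\pi i x\omega}f(x)\,\overline{\hat g(\omega)},\qquad f,g\in\mathcal{S}(\rd).
\]
This factorization reduces the problem to the elementary action of time-frequency shifts on $f$ and on $\hat g$ separately.

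First I would record the two ingredients. On the one hand, $\pi(z)f(x)=e^{2\pi i z_2 x}f(x-z_1)$ directly from \eqref{eq:kh25}. On the other hand, using the standard intertwining of time-frequency shifts under the Fourier transform (equivalently, \eqref{FI}), one computes $\widehat{\pi(w)g}(\omega)=e^{-2\pi i w_1(\omega-w_2)}\hat g(\omega-w_2)$, whence
\[
\overline{\widehat{\pi(w)g}(\omega)}=e^{2\pi i w_1(\omega-w_2)}\,\overline{\hat g(\omega-w_2)}.
\]
Substituting both into the factored form of $W_0$ yields
\[
W_0(\pi(z)f,\pi(w)g)(x,\omega)=e^{-2\pi i x\omega}\,e^{2\pi i z_2 x}\,e^{2\pi i w_1(\omega-w_2)}\,f(x-z_1)\,\overline{\hat g(\omega-w_2)}.
\]

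Finally I would reassemble the right-hand side around the shifted distribution $W_0(f,g)(x-z_1,\omega-w_2)=e^{-2\pi i(x-z_1)(\omega-w_2)}f(x-z_1)\overline{\hat g(\omega-w_2)}$, replacing $f(x-z_1)\overline{\hat g(\omega-w_2)}$ by $e^{2\pi i(x-z_1)(\omega-w_2)}W_0(f,g)(x-z_1,\omega-w_2)$. Collecting the resulting four exponentials, the total phase divided by $2\pi i$ simplifies to $x(z_2-w_2)-\omega(z_1-w_1)+w_2(z_1-w_1)$; recognizing $x(z_2-w_2)-\omega(z_1-w_1)=(x,\omega)\cdot J(z-w)$ and $w_2(z_1-w_1)=(z_1-w_1)w_2$, this is precisely $c_0\,e^{2\pi i[(x,\omega)\cdot J(z-w)]}$, which gives \eqref{CovRD}. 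The compact form then follows by unwinding the definition of $\pi$ on the doubled variables, with shift vector $(z_1,w_2)=\mathcal{T}_0(z,w)$ and modulation vector $J(z-w)$, in agreement with the $\tau=0$ specialization of \eqref{CovtauWDcompact}. The only delicate point is the phase-factor bookkeeping, namely checking that the accumulated exponent splits exactly into the symplectic phase $(x,\omega)\cdot J(z-w)$ and the constant $c_0$; as a cross-check, one may alternatively note that both sides of \eqref{CovtauWD} depend continuously on $\tau$ for fixed Schwartz $f,g$ (by dominated convergence in \eqref{tauwig}), so that the identity persists in the limit $\tau\to0^+$ and recovers \eqref{CovRD}.
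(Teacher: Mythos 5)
Your proof is correct and follows essentially the same route as the paper: the paper's own argument likewise rests on the factorization $W_0(f,g)(x,\omega)=e^{-2\pi i x\omega}f(x)\overline{\hat g(\omega)}$ together with the identities $\pi(z)f=M_{z_2}T_{z_1}f$ and $\mathcal{F}(M_{w_2}T_{w_1}g)=T_{w_2}M_{-w_1}\hat g$, and the phase bookkeeping you carry out matches the paper's line-by-line computation. Your added remarks (deriving the factorization explicitly from \eqref{tauwig} and the $\tau\to 0^+$ continuity cross-check) are sound but do not change the method.
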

\begin{proof}
We have
\begin{flalign*}
W_{0}\left(\pi\left(z_1,z_2\right)f,\pi\left(w_1,w_2\right)g\right)\phas & =e^{-2\pi i x\omega}M_{z_2}T_{z_1}f\cdot\overline{\mathcal{F}\left(M_{w_2}T_{w_1}g\right)\left(\omega\right)}\\
 & =e^{-2\pi i x\omega}e^{2\pi ixz_2}f\left(x-z_1\right)\overline{T_{w_2}M_{-w_1}\hat{g}\left(\omega\right)}\\
 & =e^{-2\pi ix \omega}e^{2\pi ix z_2}e^{2\pi iw_1\left(\omega-w_2\right)}f\left(x-z_1\right)\overline{\hat{g}\left(\omega-w_2\right)}\\
 & =e^{2\pi i w_2\left(z_1-w_1\right)}e^{2\pi ix\left(z_2-w_2\right)}e^{-2\pi i\omega\left(z_1-w_1\right)}\\
 &\qquad\qquad \times W_{0}\left(f,g\right)\left(x-z_1,\omega-w_2\right),
\end{flalign*}
 as desired.
\end{proof}

We finally state the covariance property for the conjugate
Rihaczek distribution, whose proof follows almost at once from
the preceding result. It is again important to notice that  it can be inferred by the naive substitution
$\tau=1$ in \eqref{CovtauWD}.
\begin{prop} Consider $f,g\in\mathcal{S}\left(\mathbb{R}^{d}\right)$.
For $z=(z_1,z_2),\,w=(w_1,w_2)\in\mathbb{R}^{2d}$, we have
\begin{equation}\label{CovconjRD}
W_{1}\left(\pi(z)f,\pi(w)g\right)\phas =c_{1}e^{2\pi i[\phas J(z_1-w_1,z_2-w_2)]}W_{1}\left(f,g\right)\left(x-w_1,\omega-z_2\right),
\end{equation}
where  $c_{1}=e^{2\pi i(z_1-w_1) z_2}$ is a phase factor. Equivalently,  formula \eqref{CovconjRD} reads
\[
W_{1}\left(\pi\left(z\right)f,\pi\left(w\right)g\right)=c_{1}\cdot\pi\left(\left(w_{1},z_{2}\right),J\left(z-w\right)\right)W_{1}\left(f,g\right).
\]
\end{prop}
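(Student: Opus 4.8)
The plan is to avoid any fresh integral computation by exploiting the elementary symmetry
\[
W_{1}\left(f,g\right)\phas=\overline{W_{0}\left(g,f\right)\phas},
\]
which follows at once from the defining integral \eqref{tauwig} via the substitution $y\mapsto-y$ (more generally, the same substitution gives $W_{\tau}(f,g)=\overline{W_{1-\tau}(g,f)}$). This reduces the conjugate-Rihaczek covariance to the already-proved Rihaczek covariance of Proposition \ref{prop:covariance 0}, which is exactly the sense in which the result ``follows almost at once'' from the preceding one.

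Concretely, I would first write
\[
W_{1}\left(\pi(z)f,\pi(w)g\right)\phas=\overline{W_{0}\left(\pi(w)g,\pi(z)f\right)\phas}
\]
and then apply Proposition \ref{prop:covariance 0} to the right-hand side with the two windows and the two phase-space shifts interchanged, i.e.\ performing the substitutions $z\leftrightarrow w$ and $f\leftrightarrow g$ in the statement. This yields
\[
W_{0}\left(\pi(w)g,\pi(z)f\right)\phas=e^{2\pi i(w_1-z_1)z_2}\,e^{2\pi i[\phas J(w_1-z_1,w_2-z_2)]}\,W_{0}\left(g,f\right)\left(x-w_1,\omega-z_2\right).
\]

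Finally I would take the complex conjugate of this identity. Since every prefactor is unimodular, conjugation merely flips the sign of each exponent: the scalar phase becomes $e^{2\pi i(z_1-w_1)z_2}=c_{1}$, while, by linearity of $J$, $e^{-2\pi i[\phas J(w_1-z_1,w_2-z_2)]}=e^{2\pi i[\phas J(z_1-w_1,z_2-w_2)]}$. Applying the symmetry once more on the right, $\overline{W_{0}(g,f)(x-w_1,\omega-z_2)}=W_{1}(f,g)(x-w_1,\omega-z_2)$, and the translation $(x-w_1,\omega-z_2)$ is precisely the one appearing in \eqref{CovconjRD}. The compact form then follows by recognizing the $J(z-w)$-modulation composed with the translation by $(w_1,z_2)$ as the single operator $\pi((w_1,z_2),J(z-w))$, using the composition rule \eqref{eq: composizione pi}.

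The only obstacle here is purely clerical, namely correctly tracking the sign flips produced by conjugation together with the $z\leftrightarrow w$ role swap; there is no analytic content whatsoever. As an equally short alternative, one may argue by direct calculation, starting from the closed form $W_{1}(f,g)\phas=e^{2\pi ix\omega}\,\hat{f}(\omega)\,\overline{g(x)}$ and inserting $\widehat{\pi(z)f}(\omega)=e^{-2\pi iz_1(\omega-z_2)}\hat{f}(\omega-z_2)$ together with $\overline{(\pi(w)g)(x)}=e^{-2\pi ixw_2}\overline{g(x-w_1)}$; collecting the resulting phases reproduces \eqref{CovconjRD} after the same elementary bookkeeping.
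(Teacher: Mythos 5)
Your argument is correct and is essentially the paper's own proof: the paper likewise reduces to the Rihaczek case by writing $W_{1}\left(\pi(z)f,\pi(w)g\right)=\overline{W_{0}\left(\pi(w)g,\pi(z)f\right)}$, applying Proposition \ref{prop:covariance 0} with the roles of $(z,f)$ and $(w,g)$ interchanged, and taking complex conjugates, with exactly the sign bookkeeping you describe (the conjugated, swapped $c_{0}$ becoming $c_{1}=e^{2\pi i(z_1-w_1)z_2}$). Your alternative direct computation from $W_{1}(f,g)(x,\omega)=e^{2\pi ix\omega}\hat{f}(\omega)\overline{g(x)}$ is also valid but not needed, since the conjugation route matches the paper's proof verbatim.
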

\begin{proof}
It follows by the covariance property for the Rihaczek distribution:
\begin{align*}
W_{1}\left(\pi(z)f,\pi(w)g\right)\phas&=\overline{W_{0}\left(\pi(w)g,\pi(z)f\right)}\phas\\
&=e^{-2\pi i(w_1-z_1)z_2} e^{2\pi i[\phas J(z_1-w_1,z_2-w_2)]}\overline{W_{0}\left(g,f\right)}\left(x-w_1,\omega-z_2\right)\\
&=e^{2\pi i(z_1-w_1) z_2} e^{2\pi i[\phas J(z_1-w_1,z_2-w_2)]}W_{1}\left(f,g\right)\left(x-w_1,\omega-z_2\right).
\end{align*}
This completes the proof.
\end{proof}

\section{Almost diagonalization}
Lemma 3.1 in \cite{Grochenig_2006_Time} has his  analogous for $\tau$-pseudodifferential operators as follows.
\begin{lemma}
\label{lem:STFT-gaborm}Fix a non-zero window $\varphi\in \cS(\rd)$
and set $\Phi_{\tau}=W_{\tau}\left(\varphi,\varphi\right)$ for $\tau\in\left[0,1\right]$.
Then, for $\sigma\in \cS'\left(\mathbb{R}^{2d}\right)$,
\begin{equation}
\left|\left\langle \opt \left(\sigma\right)\pi\left(z\right)\varphi,\pi\left(w\right)\varphi\right\rangle \right|=\left|{V}_{\Phi_{\tau}}\sigma\left(\mathcal{T}_{\tau}\left(z,w\right),J\left(w-z\right)\right)\right|=\left|{V}_{\Phi_{\tau}}\sigma\left(x,y\right)\right|\label{eq:gaborm as STFT}
\end{equation}
and
\begin{equation}
\left|{V}_{\Phi_{\tau}}\sigma\left(x,y\right)\right|=\left|\left\langle \opt \left(\sigma\right)\pi\left(z\left(x,y\right)\right)\varphi,\pi\left(w\left(x,y\right)\right)\varphi\right\rangle \right|,\label{eq:STFT as gaborm}
\end{equation}
for all $w,z,x,y\in\mathbb{R}^{2d}$, where $\mathcal{T}_\tau$ is defined in \eqref{Ttau} and
\begin{equation}
z\left(x,y\right)=\left(\begin{array}{c}
x_{1}+\left(1-\tau\right)y_{2}\\
x_{2}-\tau y_{1}
\end{array}\right),\qquad w\left(x,y\right)=\left(\begin{array}{c}
x_{1}-\tau y_{2}\\
x_{2}+\left(1-\tau\right)y_{1}
\end{array}\right).\label{eq:z,w}
\end{equation}
\end{lemma}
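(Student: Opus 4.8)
The plan is to reduce the matrix coefficient to a short-time Fourier transform of the symbol $\sigma$, by feeding the covariance property of the $\tau$-Wigner distribution into the weak definition of $\mathrm{Op}_\tau(\sigma)$. First I would use Proposition~\ref{optker} (equivalently \eqref{tauweak}) to write
$$\langle \mathrm{Op}_\tau(\sigma)\pi(z)\varphi,\pi(w)\varphi\rangle=\langle \sigma, W_\tau(\pi(w)\varphi,\pi(z)\varphi)\rangle,$$
which is a legitimate $\mathcal{S}'$--$\mathcal{S}$ pairing since $\pi(z)\varphi,\pi(w)\varphi\in\mathcal{S}(\mathbb{R}^d)$ and $W_\tau(\pi(w)\varphi,\pi(z)\varphi)\in\mathcal{S}(\mathbb{R}^{2d})$. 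The point is that the left-hand side is now expressed through a $\tau$-Wigner distribution of two shifted copies of the same window.

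Next I would apply the covariance identity \eqref{CovtauWDcompact} of Proposition~\ref{prop:covtauWDF}, taking $f=g=\varphi$ and interchanging the roles of $z$ and $w$ (because $\pi(w)\varphi$ occupies the first slot and $\pi(z)\varphi$ the second). This produces
$$W_\tau(\pi(w)\varphi,\pi(z)\varphi)=c_\tau\,\pi\big(\mathcal{T}_\tau(w,z),\,J(w-z)\big)\,\Phi_\tau,$$
with $\Phi_\tau=W_\tau(\varphi,\varphi)$ and $c_\tau$ the unimodular phase factor from \eqref{ctau}. Inserting this into the pairing and recalling $V_{\Phi_\tau}\sigma(X)=\langle\sigma,\pi(X)\Phi_\tau\rangle$ from \eqref{STFTdef}, I get
$$\langle \mathrm{Op}_\tau(\sigma)\pi(z)\varphi,\pi(w)\varphi\rangle=\overline{c_\tau}\,V_{\Phi_\tau}\sigma\big(\mathcal{T}_\tau(w,z),\,J(w-z)\big).$$
Because $|c_\tau|=1$, taking absolute values removes the phase and yields \eqref{eq:gaborm as STFT}; I note that the translation argument comes out as $\mathcal{T}_\tau(w,z)$, in agreement with the inversion formulas \eqref{eq:z,w}.

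For the second identity \eqref{eq:STFT as gaborm} I would invert the affine map $(z,w)\mapsto(\mathcal{T}_\tau(w,z),J(w-z))=:(x,y)$. Using the definition of $\mathcal{T}_\tau$ in \eqref{Ttau} together with $J(w-z)=(w_2-z_2,\,z_1-w_1)$, I would solve the four block-equations for $(z_1,z_2,w_1,w_2)$ in terms of $(x_1,x_2,y_1,y_2)$; the unique solution is exactly $z(x,y)$ and $w(x,y)$ in \eqref{eq:z,w}. Substituting these back into \eqref{eq:gaborm as STFT} gives \eqref{eq:STFT as gaborm}.

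Once the covariance property is granted, the argument is essentially bookkeeping: the only place demanding care is the swapped order of $z$ and $w$ forced by the convention $\langle \mathrm{Op}_\tau(\sigma)f,g\rangle=\langle\sigma,W_\tau(g,f)\rangle$, and the verification that the affine change of variables is invertible with inverse \eqref{eq:z,w}. The phase factor $c_\tau$ never enters the final statement, since everything is taken in modulus.
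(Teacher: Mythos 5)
Your argument coincides with the paper's proof in structure and in every step it carries out, but as written it only covers $\tau\in(0,1)$, while the lemma is stated for all $\tau\in[0,1]$. The covariance identity \eqref{CovtauWDcompact} that you feed into the weak definition is established in Proposition \ref{prop:covtauWDF} only for $\tau\in(0,1)$: its proof runs through the STFT representation \eqref{eq:WDF STFT} and the operator $A_{\tau}$ of \eqref{Atau}, both of which are undefined at $\tau=0$ and $\tau=1$ (the dilation factors $\tfrac{1-\tau}{\tau}$, $\tfrac{1}{\tau}$, $\tfrac{1}{1-\tau}$ degenerate). Hence your citation does not justify the Kohn--Nirenberg case $\tau=0$ or the ``right symbol'' case $\tau=1$. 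The paper closes exactly this point by treating the endpoints separately: the covariance formulas \eqref{CovRD} for the Rihaczek distribution (Proposition \ref{prop:covariance 0}) and \eqref{CovconjRD} for the conjugate-Rihaczek distribution are obtained by a direct elementary computation, and they show that \eqref{CovtauWDcompact} persists at the endpoints with $\mathcal{T}_{0}(w,z)=(w_{1},z_{2})$ and $\mathcal{T}_{1}(w,z)=(z_{1},w_{2})$; after that, the same bookkeeping as in your $(0,1)$ argument yields \eqref{eq:gaborm as STFT} and \eqref{eq:STFT as gaborm} for $\tau\in\{0,1\}$. Adding that citation (or the short direct computation) makes your proof complete, and then it is the paper's proof verbatim.

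Everything you do carry out is correct: the reduction $\left\langle \opt(\sigma)\pi(z)\varphi,\pi(w)\varphi\right\rangle=\left\langle\sigma,W_{\tau}\left(\pi(w)\varphi,\pi(z)\varphi\right)\right\rangle$ via \eqref{tauweak}, the swap of $z$ and $w$ it forces, discarding the unimodular phase $c_{\tau}$ after taking moduli, and the inversion of the affine map $(z,w)\mapsto\left(\mathcal{T}_{\tau}(w,z),J(w-z)\right)$, whose unique solution is indeed \eqref{eq:z,w}. Your observation that the translation argument produced by the computation is $\mathcal{T}_{\tau}(w,z)$ is also the right reading: since $\mathcal{T}_{\tau}(z,w)=\mathcal{T}_{1-\tau}(w,z)$, the order of the arguments matters for $\tau\neq\tfrac12$, and it is $\mathcal{T}_{\tau}(w,z)$ that is consistent with \eqref{eq:z,w} and with the paper's own proof (and with how the lemma is later used in Theorem \ref{thm:almost diag}).
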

\begin{proof}
For $\tau\in (0,1)$, we use the time-frequency representation of $\tau$-Wigner pseudodifferential operators
and the covariance property in Prop. \ref{prop:covtauWDF}:
\begin{align*}
\left\langle \opt \left(\sigma\right)\pi\left(z\right)\varphi,\pi\left(w\right)\varphi\right\rangle &=\left\langle \sigma,W_{\tau}\left(\pi\left(w\right)\varphi,\pi\left(z\right)\varphi\right)\right\rangle \\
&=\left\langle \sigma,c_\tau M_{J\left(w-z\right)}T_{\mathcal{T}_{\tau}\left(w,z\right)}W_{\tau}\left(\varphi,\varphi\right)\right\rangle\\
&=\overline{c_\tau}\, {V}_{\Phi_{\tau}}\sigma\left(\mathcal{T}_{\tau}\left(w,z\right),J\left(w-z\right)\right).
\end{align*}

Formula \eqref{eq:STFT as gaborm} follows by setting $x=\mathcal{T}_{\tau}\left(w,z\right)$
and $y=J\left(w-z\right)$. 

For what concerns the case $\tau=0$, we use formula \eqref{CovRD} and calculate
\begin{align*}
\left\langle \mathrm{Op}_{0}\left(\sigma\right)\pi\left(z\right)\varphi,\pi\left(w\right)\varphi\right\rangle &=\left\langle \sigma,W_{0}\left(\pi\left(w\right)\varphi,\pi\left(z\right)\varphi\right)\right\rangle \\
&=\left\langle \sigma,c_{0}M_{J\left(w-z\right)}T_{\mathcal{T}_{0}\left(w,z\right)}W_{0}\left(\varphi,\varphi\right)\right\rangle\\
&=\overline{c_{0}}\, {V}_{\Phi_{0}}\sigma\left(\mathcal{T}_{0}\left(w,z\right),J\left(w-z\right)\right).
\end{align*}

Formula \eqref{eq:STFT as gaborm} follows by setting $x=\mathcal{T}_{0}\left(w,z\right)$
and $y=J\left(w-z\right)$. Solving the system for $z$ and $w$ gives
(\ref{eq:z,w}). The case $\tau=1$ follows the same pattern. Indeed, by \eqref{CovconjRD},
\begin{align*}
\left\langle \mathrm{Op}_{1}\left(\sigma\right)\pi\left(z\right)\varphi,\pi\left(w\right)\varphi\right\rangle &=\left\langle \sigma,W_{1}\left(\pi\left(w\right)\varphi,\pi\left(z\right)\varphi\right)\right\rangle \\
&=\left\langle \sigma,c_{1}M_{J\left(w-z\right)}T_{\mathcal{T}_{1}\left(w,z\right)}W_{1}\left(\varphi,\varphi\right)\right\rangle\\
&=\overline{c_{1}}\,{V}_{\Phi_{1}}\sigma\left(\mathcal{T}_{1}\left(w,z\right),J\left(w-z\right)\right).
\end{align*}
\end{proof}

\begin{lemma}\label{L1} Let $v$ be an admissible weight function on $\rdd$ and $1\leq p\leq\infty$. \\
	$(i)$ If $\tau\in [0,1]$,  $f\in M^1_v(\rd), g\in M^p_v(\rd)$ we have $W_\tau (g,f)\in M^{1,p}_{1\otimes v\circ J^{-1}}(\rdd)$, with
	\begin{equation}\label{ef1}
	\|W_\tau (g,f)\|_{M^{1,p}_{1\otimes v\circ J^{-1}}}\lesssim_\tau \|f\|_{ M^1_v}\|g\|_{ M^p_v}.
	\end{equation}
 $(ii)$ If $\tau\in (0,1)$,  $f \in M^1_{v\circ \cU_{\tau}}(\rd) , g \in M^p_v(\rd) $, we have $W_\tau (g,f)\in W\left(\cF L^1, L^p_{ v\circ \mathcal{B}_{\tau}}\right)(\rdd),$ with
\begin{equation}\label{ef2}
\|W_\tau (g,f)\|_{W\left(\cF L^1, L^p_{ v\circ \mathcal{B}_{\tau}}\right)} \lesssim_\tau \left[\tau\left(1-\tau\right)\right]^{-d/p'} \|f\|_{M^1_{v\circ \cU_{\tau}}} \|g\|_{M^p_v}, 
\end{equation}
and the matrices $\mathcal{B}_\tau$ and $\cU_{\tau}$ defined in \eqref{BT} and \eqref{UT}, respectively.
\end{lemma}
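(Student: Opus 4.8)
The plan is to compute both norms through the short-time Fourier transform of $W_\tau(g,f)$ against the window $\Phi_\tau = W_\tau(\varphi,\varphi)\in\cS(\rdd)\setminus\{0\}$, for which Lemma \ref{lem:STFT of tauWig} (with $\varphi_1=\varphi_2=\varphi$) supplies the explicit factorization
\[
\left|V_{\Phi_\tau}W_\tau(g,f)(z,\zeta)\right| = \left|V_\varphi g(z - \mathcal{A}_{1-\tau}'\zeta)\right|\,\left|V_\varphi f(z + \mathcal{A}_\tau'\zeta)\right|.
\]
Since $\Phi_\tau$ is Schwartz, it is an admissible window for the spaces at hand (Theorems \ref{admwind} and \ref{admwindW}), so it is enough to estimate the mixed-norm integrals of the right-hand side; throughout I write $F = |V_\varphi g|$ and $G = |V_\varphi f|$.

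For part $(i)$ the $M^{1,p}_{1\otimes v\circ J^{-1}}$-norm integrates first in the space variable $z$ and then in $\zeta$. In the inner integral I would substitute $u = z - \mathcal{A}_{1-\tau}'\zeta$ (Jacobian $1$) and invoke the identity $\mathcal{A}_\tau' + \mathcal{A}_{1-\tau}' = J$ recorded after \eqref{A'}, so that $z + \mathcal{A}_\tau'\zeta = u + J\zeta$; the inner integral then becomes the convolution $(F^\vee \ast G)(J\zeta)$, where $F^\vee(u)=F(-u)$. In the remaining integral over $\zeta$ I substitute $\eta = J\zeta$, which is unimodular and, since $J^{-1} = -J$ and $v$ is even, turns the weight $v(J^{-1}\zeta)$ into $v(\eta)$; what is left is exactly $\|F^\vee\ast G\|_{L^p_v}$. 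A weighted Young inequality, together with $\|F^\vee\|_{L^p_v} = \|F\|_{L^p_v} = \|g\|_{M^p_v}$ (evenness of $v$) and $\|G\|_{L^1_v} = \|f\|_{M^1_v}$, yields \eqref{ef1}. This argument uses only $\mathcal{A}_\tau' + \mathcal{A}_{1-\tau}' = J$, which persists at the endpoints, so it is valid for all $\tau\in[0,1]$.

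For part $(ii)$ the $W(\cF L^1,L^p_{v\circ\mathcal{B}_\tau})$-norm integrates first in the frequency variable $\zeta$, and here the invertibility of $\mathcal{A}_\tau'$ forces $\tau\in(0,1)$. I would substitute $\eta = \mathcal{A}_\tau'\zeta$ in the inner integral (Jacobian $[\tau(1-\tau)]^{-d}$, since $|\det\mathcal{A}_\tau'| = [\tau(1-\tau)]^d$); using $\mathcal{A}_{1-\tau}'(\mathcal{A}_\tau')^{-1} = \mathcal{A}_{1-\tau}\mathcal{A}_\tau^{-1} = -\mathcal{U}_\tau$, which follows from Lemma \ref{lem:properties of Atau}, the first argument becomes $z + \mathcal{U}_\tau\eta$. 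A second substitution $s = z + \eta$, combined with the identity $I - \mathcal{U}_\tau = \mathcal{B}_\tau$ (again from Lemma \ref{lem:properties of Atau} together with the definitions \eqref{UT} and \eqref{BT}), rewrites the inner integral as $[\tau(1-\tau)]^{-d}\int F(\mathcal{B}_\tau z + \mathcal{U}_\tau s)G(s)\,ds$. Taking the weighted $L^p$-norm in $z$ and substituting $y = \mathcal{B}_\tau z$ (Jacobian $[\tau(1-\tau)]^d$) reduces matters to $[\tau(1-\tau)]^{d/p - d}\|\tilde h\|_{L^p_v}$ with $\tilde h(y) = \int F(y + \mathcal{U}_\tau s)G(s)\,ds$. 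Finally, Minkowski's integral inequality together with the moderateness estimate $\|F(\cdot + \mathcal{U}_\tau s)\|_{L^p_v}\le v(\mathcal{U}_\tau s)\|F\|_{L^p_v}$ gives $\|\tilde h\|_{L^p_v}\le \|F\|_{L^p_v}\|G\|_{L^1_{v\circ\mathcal{U}_\tau}} = \|g\|_{M^p_v}\|f\|_{M^1_{v\circ\mathcal{U}_\tau}}$, and since $d/p - d = -d/p'$ this is precisely \eqref{ef2}.

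The computations are individually elementary; the main obstacle is the bookkeeping of the several linear changes of variables and their Jacobians, and in particular correctly identifying the matrix relations $\mathcal{A}_{1-\tau}'(\mathcal{A}_\tau')^{-1} = -\mathcal{U}_\tau$ and $I - \mathcal{U}_\tau = \mathcal{B}_\tau$ that make the two successive substitutions in part $(ii)$ collapse into a clean convolution structure and produce exactly the constant $[\tau(1-\tau)]^{-d/p'}$. The fact that $\mathcal{A}_\tau'$ is invertible only for $\tau\in(0,1)$ is what confines part $(ii)$ to the open interval, consistent with the singular behaviour of the endpoints flagged in the introduction.
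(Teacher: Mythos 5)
Your proposal is correct and follows essentially the same route as the paper: both proofs reduce the mixed norms to the factorization of $V_{\Phi_\tau}W_\tau(g,f)$ from Lemma \ref{lem:STFT of tauWig}, perform linear changes of variables governed by the identities $\mathcal{A}_\tau'+\mathcal{A}_{1-\tau}'=J$, $I-\mathcal{B}_\tau=\mathcal{U}_\tau$ and $\mathcal{U}_\tau^{-1}=\mathcal{U}_{1-\tau}$, and conclude with a weighted Young-type convolution estimate, producing the same Jacobian factor $[\tau(1-\tau)]^{-d/p'}$. The only differences are cosmetic (which variable you freeze in the substitutions, and phrasing the last step via Minkowski plus the translation bound $\|F(\cdot+x)\|_{L^p_v}\le v(x)\|F\|_{L^p_v}$, which is precisely the proof of weighted Young's inequality).
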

\begin{proof}
	Fix $\varphi_{1},\varphi_{2}\in\mathcal{S}\left(\mathbb{R}^{d}\right)$,
	thus $\Phi_{\tau}=W_{\tau}\left(\varphi_{1},\varphi_{2}\right)\in\mathcal{S}\left(\mathbb{R}^{d}\right)$.
	Therefore, by Lemma \ref{lem:STFT of tauWig} we can write (observe that the norm below may depend on $\tau$)
\begin{align*}
A&:=\left\Vert W_{\tau}\left(g,f\right)\right\Vert _{M_{1\otimes v\circ J^{-1}}^{1,p}\left(\mathbb{R}^{2d}\right)}=\left(\int_{\mathbb{R}^{2d}}\left(\int_{\mathbb{R}^{2d}}\left|{V}_{\Phi_{\tau}}W_\tau\left(g,f\right)\left(z,\zeta\right)\right|d z\right)^{p}v\left(J^{-1}\zeta\right)^{p}d \zeta\right)^{\frac{1}{p}}\\
&=\left(\int_{\mathbb{R}^{2d}}\left(\int_{\mathbb{R}^{2d}}\left|V_{\varphi_{1}}g\left(z-\mathcal{A}_{1-\tau}'\zeta\right)\right|\cdot\left|V_{\varphi_{2}}f\left(z+\mathcal{A}_{\tau}'\zeta\right)\right|d z\right)^{p}v\left(J^{-1}\zeta\right)^{p}d \zeta\right)^{\frac{1}{p}}.
\end{align*}
	The substitution $z'=z+\mathcal{A}_{\tau}'\zeta$, the properties
	of $\mathcal{A}_{\tau}'$ provided in Lemma \ref{lem:properties of Atau}
	(in particular $\mathcal{A}_{\tau}'+\mathcal{A}_{1-\tau}'=J$) and
	the fact that $v$ is an even function, give
	\begin{align*}
	A&
	=\left(\int_{\mathbb{R}^{2d}}\left(\int_{\mathbb{R}^{2d}}\left|V_{\varphi_{1}}g\left(z'-J\zeta\right)\right|\cdot\left|V_{\varphi_{2}}f\left(z'\right)\right|d z'\right)^{p}v\left(-J\zeta\right)^{p}d \zeta\right)^{\frac{1}{p}}\\
	&=\left(\int_{\mathbb{R}^{2d}}\left[\left|V_{\varphi_{2}}f\right|*\left|V_{\varphi_{1}}g\right|^{*}\left(J\zeta\right)\right]^{p}v\left(J\zeta\right)^{p}d \zeta\right)^{\frac{1}{p}}\\
&	=\left\Vert \left|V_{\varphi_{2}}f\right|*\left|V_{\varphi_{1}}g\right|^{*}\right\Vert _{L_{v}^{p}}\lesssim\left\Vert V_{\varphi_{2}}f\right\Vert _{L_{v}^{1}}\left\Vert V_{\varphi_{1}}g\right\Vert _{L_{v}^{p}}\asymp\left\Vert f\right\Vert _{M_{v}^{1}}\left\Vert g\right\Vert _{M_{v}^{p}}.
\end{align*}
	Recall that $V_{\f_1} g ^\ast(z)=\overline{V_{\f_1} g (-z)}$.
	In a similar fashion, 
\[
	B\coloneqq \| W_{\tau} \left(g,f\right)\| _{W\left(\mathcal{F}L^{1},L_{v\circ\mathcal{B}_{\tau}}^{p}\right)\left(\mathbb{R}^{2d}\right)}=\left(\int_{\mathbb{R}^{2d}}\left(\int_{\mathbb{R}^{2d}}\left|{V}_{\Phi_{\tau}}\left(g,f\right)\left(z,\zeta\right)\right|d \zeta\right)^{p}v\left(\mathcal{B}_{\tau}z\right)^{p}d z\right)^{\frac{1}{p}}\\
\]
{\small{}
	\[
	=\left[\int_{\mathbb{R}^{2d}}\left(\int_{\mathbb{R}^{2d}}\left|V_{\varphi_{1}}g\left(z-\sqrt{\tau\left(1-\tau\right)}\mathcal{A}_{1-\tau}\zeta\right)\right|\cdot\left|V_{\varphi_{2}}f\left(z+\sqrt{\tau\left(1-\tau\right)}\mathcal{A}_{\tau}\zeta\right)\right|\mathrm{d}\zeta\right)^{p}v\left(\mathcal{B}_{\tau}z\right)^{p}\mathrm{d}z\right]^{\frac{1}{p}}.
	\]
}
	Consider now the substitution $\eta=z-\sqrt{\tau\left(1-\tau\right)}\mathcal{A}_{1-\tau}\zeta$;
	 by the properties of $\mathcal{A}_{\tau}$ provided in
	Lemma \ref{lem:properties of Atau}, we can write
	\[
	B=\left[\tau\left(1-\tau\right)\right]^{-d}\left(\int_{\mathbb{R}^{2d}}\left(\int_{\mathbb{R}^{2d}}\left|V_{\varphi_{1}}g\left(\eta \right)\right|\cdot\left|V_{\varphi_{2}}f\left(\cB_{1-\tau}z+\cU_{1-\tau}\eta\right)\right|d \eta\right)^{p}v\left(\mathcal{B}_{\tau}z\right)^{p}d z\right)^{\frac{1}{p}}.
	\]
	Notice that $I-\cB_{\tau}=\cU_{\tau}$ and $\cU_{\tau}^{-1}=\cU_{1-\tau}$ (cf. \eqref{UT}), hence 
	\begin{align*}
B&=\left[\tau\left(1-\tau\right)\right]^{-d}\left(\int_{\mathbb{R}^{2d}}\left(\int_{\mathbb{R}^{2d}}\left|V_{\varphi_{1}}g\left(\eta \right)\right|\cdot\left|V_{\varphi_{2}}f\left(\cU_{1-\tau}(\eta + \cU_{\tau}\cB_{1-\tau}z\right)\right|d \eta\right)^{p}v\left(\mathcal{B}_{\tau}z\right)^{p}d z\right)^{\frac{1}{p}}\\
& =\left[\tau\left(1-\tau\right)\right]^{-d}\left(\int_{\mathbb{R}^{2d}}\left(\int_{\mathbb{R}^{2d}}\left|V_{\varphi_{1}}g\left(\eta \right)\right|\cdot\left|V_{\varphi_{2}}f\left(\cU_{1-\tau}(\eta - \cB_{\tau}z\right)\right|d \eta\right)^{p}v\left(\mathcal{B}_{\tau}z\right)^{p}d z\right)^{\frac{1}{p}} \\
&=\left[\tau\left(1-\tau\right)\right]^{-d}\left(\int_{\mathbb{R}^{2d}}(|V_{\varphi_{1}}g|*|V_{\varphi_{2}}f(\cU_{1-\tau}\cdot)|^{*})^{p}\left(\mathcal{B}_{\tau}z\right)v\left(\mathcal{B}_{\tau}z\right)^{p}d z\right)^{\frac{1}{p}}\\
	&=\left[\tau\left(1-\tau\right)\right]^{d\left(\frac{1}{p}-1\right)}\left\Vert |V_{\varphi_{1}}g|*|V_{\varphi_{2}}f(\cU_{1-\tau}\cdot)|^{*}\right\Vert _{L_{v}^{p}} \\&\lesssim \left[\tau\left(1-\tau\right)\right]^{d\left(\frac{1}{p}-1\right)}\left\Vert V_{\varphi_{2}}f\right\Vert _{L_{v\circ \cU_{\tau}}^{1}}\Vert V_{\varphi_{1}}g\Vert _{L_{v}^{p}} \\ &\asymp \left[\tau\left(1-\tau\right)\right]^{d\left(\frac{1}{p}-1\right)} \left\Vert f\right\Vert _{M_{v\circ \cU_{\tau}}^{1}} \left\Vert g\right\Vert _{M_{v}^{p}}.
	\end{align*}
\end{proof}

Thanks to this property we are able to extend to $\tau$-pseudodifferential operators,  $\tau\in [0,1]$, the characterization (obtained only for Weyl operators) in
\cite[Theorem 3.2]{Grochenig_2006_Time}. Namely, 
\begin{theorem}\label{teor41}
Let $v$ be an admissible weight function on $\mathbb{R}^{2d}$. Consider  $\varphi\in M^1_v\left(\mathbb{R}^{d}\right)\setminus\{0\}$ and a lattice $\Lambda \subseteq \mathbb{R}^{2d}$ such that  $\mathcal{G}\left(\varphi,\Lambda\right)$ is a  Gabor frame for $L^{2}\left(\mathbb{R}^{d}\right)$.
For any $\tau\in\left[0,1\right]$, the following properties are equivalent:
\begin{enumerate}
\item[$(i)$] $\sigma\in M_{1\otimes v\circ J^{-1}}^{\infty,1}\left(\mathbb{R}^{2d}\right)$.
\item[$(ii)$] $\sigma\in\mathcal{S}'\left(\mathbb{R}^{2d}\right)$ and there exists
a function $H_\tau\in L_{v}^{1}\left(\mathbb{R}^{2d}\right)$ such that
\begin{equation}
\left|\left\langle \opt \left(\sigma\right)\pi\left(z\right)\varphi,\pi\left(w\right)\varphi\right\rangle \right|\le H_\tau\left(w-z\right)\qquad\forall w,z\in\mathbb{R}^{2d}.\label{eq:almdiag J}
\end{equation}
	\item[$(iii)$] $\sigma\in\mathcal{S}'\left(\mathbb{R}^{2d}\right)$ and there exists
	a sequence $h_\tau\in\ell_{v}^{1}\left(\Lambda\right)$ such that
	\begin{equation}
	\left|\left\langle \opt \left(\sigma\right)\pi\left(\mu\right)\varphi,\pi\left(\lambda\right)\varphi\right\rangle \right|\le h_\tau\left(\lambda-\mu\right)\qquad\forall\lambda,\mu\in\Lambda.\label{eq:almdiag discr}
	\end{equation}
\end{enumerate}
\end{theorem}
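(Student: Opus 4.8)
The whole plan pivots on the identity of Lemma \ref{lem:STFT-gaborm}, which recasts the Gabor matrix entries of $\mathrm{Op}_{\tau}(\sigma)$ as sampled values of the short-time Fourier transform of the symbol against the operator window $\Phi_{\tau}=W_{\tau}(\varphi,\varphi)$, namely
\[
\left|\left\langle \mathrm{Op}_{\tau}(\sigma)\pi(z)\varphi,\pi(w)\varphi\right\rangle\right|=\left|V_{\Phi_{\tau}}\sigma\left(\mathcal{T}_{\tau}(z,w),J(w-z)\right)\right|.
\]
This holds for every $\tau\in[0,1]$ (the endpoints being covered by the Rihaczek covariance), so the argument will be uniform in $\tau$. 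The first step I would take is to verify that $\Phi_{\tau}$ is an \emph{admissible} window for the target space $M^{\infty,1}_{1\otimes v\circ J^{-1}}(\mathbb{R}^{2d})$. Since $\varphi\in M^1_v(\mathbb{R}^d)$, Lemma \ref{L1}$(i)$ with $p=1$ and $f=g=\varphi$ gives $\Phi_{\tau}\in M^{1}_{1\otimes v\circ J^{-1}}(\mathbb{R}^{2d})$; as $1\otimes v\circ J^{-1}$ is submultiplicative on $\mathbb{R}^{4d}$, Theorem \ref{admwind} then lets me compute the $M^{\infty,1}_{1\otimes v\circ J^{-1}}$-norm of $\sigma$ through $V_{\Phi_{\tau}}\sigma$ up to equivalence. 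This is precisely what justifies using $\Phi_{\tau}$ inside the symbol norm.

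With this in hand, I expect $(i)\Leftrightarrow(ii)$ to be the easy core. The map $(z,w)\mapsto(\mathcal{T}_{\tau}(z,w),J(w-z))$ is a linear bijection of $\mathbb{R}^{4d}$ whose second block recovers $w-z=J^{-1}(J(w-z))$. Setting $H_{\tau}(\zeta')=\sup_{U}|V_{\Phi_{\tau}}\sigma(U,J\zeta')|$, the displayed identity immediately yields the bound in $(ii)$; conversely any continuous majorant forces $|V_{\Phi_{\tau}}\sigma(U,Z)|\le H_{\tau}(J^{-1}Z)$ for \emph{all} $U,Z$, because the coordinate change is onto. In either direction the substitution $Z=J\zeta'$ (with $|\det J|=1$) turns $\|\sigma\|_{M^{\infty,1}_{1\otimes v\circ J^{-1}}}$ into $\|H_{\tau}\|_{L^1_v}$, so $(i)$ and $(ii)$ are equivalent with comparable norms. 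No genuine obstruction appears here: everything reduces to transporting the weight $v$ through $J$, and the shear $\mathcal{T}_{\tau}$ is harmless since it acts only on the variable that is sup-ed away.

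For $(i)\Rightarrow(iii)$ the plan is to sample the continuous estimate on $\Lambda$, taking $h_{\tau}(\nu)=\sup_{U}|V_{\Phi_{\tau}}\sigma(U,J\nu)|$. The subtlety I anticipate is that restricting an $L^1_v$ function to a lattice need not give an $\ell^1_v$ sequence. To fix this I would invoke the change-of-window inequality $|V_{\Phi_{\tau}}\sigma|\le \|\phi\|_2^{-2}\,|V_{\phi}\sigma|*|V_{\Phi_{\tau}}\phi|$ (\cite{Grochenig_2001_Foundations}) with an auxiliary Schwartz window $\phi$ on $\mathbb{R}^{2d}$: because $V_{\Phi_{\tau}}\phi\in L^1_{1\otimes v\circ J^{-1}}$ is continuous with controlled local behaviour, the majorant acquires Wiener-amalgam regularity $W(L^\infty,L^1_v)$ in the relevant variable, and lattice samples of a $W(L^\infty,L^1_v)$ function do lie in $\ell^1_v$. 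This produces $h_{\tau}\in\ell^1_v(\Lambda)$ dominating the discrete Gabor matrix.

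The direction $(iii)\Rightarrow(i)$ is where I expect the main obstacle, since one must rebuild continuous $L^1_v$ control from discrete $\ell^1_v$ data. The key tool is the Gabor frame: because $\mathcal{G}(\varphi,\Lambda)$ is a frame with $\varphi\in M^1_v$ and $v$ satisfies the GRS condition, the canonical dual window $\gamma$ also lies in $M^1_v$ (inverse-closedness, \cite{Grochenig_2001_Foundations}), giving the reproducing expansion
\[
\left\langle \mathrm{Op}_{\tau}(\sigma)\pi(z)\varphi,\pi(w)\varphi\right\rangle=\sum_{\lambda,\mu\in\Lambda}\langle\pi(z)\varphi,\pi(\mu)\gamma\rangle\,\overline{\langle\pi(w)\varphi,\pi(\lambda)\gamma\rangle}\,\langle\mathrm{Op}_{\tau}(\sigma)\pi(\mu)\varphi,\pi(\lambda)\varphi\rangle.
\]
Bounding the operator entries by $h_{\tau}(\lambda-\mu)$ and using $|\langle\pi(z)\varphi,\pi(\mu)\gamma\rangle|=|V_{\gamma}\varphi(\mu-z)|$ with $\Theta:=|V_{\gamma}\varphi|\in L^1_v$, the right-hand side turns into a mixed continuous/discrete convolution of the kernel $\Theta$, the sequence $h_{\tau}$, and a second copy of $\Theta$. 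The crux will then be a weighted Young-type estimate showing that the supremum of this expression over each slice $w-z=s$ defines a function of $s$ in $L^1_v$, with norm $\lesssim\|\Theta\|_{L^1_v}^2\,\|h_{\tau}\|_{\ell^1_v}$; transporting back through $J$ as in $(i)\Leftrightarrow(ii)$ yields $\sigma\in M^{\infty,1}_{1\otimes v\circ J^{-1}}$. The hard points are the membership of the dual window in $M^1_v$ and the correct bookkeeping of the submultiplicative weight under the mixed convolution; apart from the $\tau$-dependent change of variables, the scheme is an adaptation of the Weyl case $\tau=\tfrac12$ in \cite[Theorem 3.2]{Grochenig_2006_Time}.
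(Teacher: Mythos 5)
Your proposal is correct and takes essentially the same route as the paper: the paper details only $(i)\Rightarrow(ii)$, using Lemma \ref{lem:STFT-gaborm} together with Lemma \ref{L1}$(i)$ and setting $H_\tau=\tilde{H}_\tau\circ J$ (exactly your construction), and explicitly defers the remaining implications to the pattern of \cite[Theorem 3.2]{Grochenig_2006_Time} --- which is precisely the amalgam-regularity sampling for $(i)\Rightarrow(iii)$ and the dual-window frame expansion for $(iii)\Rightarrow(i)$ that you reconstruct. The only slip is bibliographic: the $M^1_v$-membership of the canonical dual window under the GRS condition is the Gr\"ochenig--Leinert inverse-closedness theorem rather than a result of \cite{Grochenig_2001_Foundations}, but this does not affect the argument.
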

\begin{proof}
The proof follows the pattern of the corresponding one for Weyl operators \cite[Theorem 3.2]{Grochenig_2006_Time}. We detail here only the case $(i)\Rightarrow (ii)$, to show where the $\tau$-dependence of the controlling function $H_\tau$ comes from. The case of the sequence $h_\tau$ is similar.
Consider $\f \in M^1_v(\rd)$ as in the assumptions and set $\Phi_\tau =W_\tau(\f,\f)\in M^{1}_{1\otimes v\circ J^{-1}}(\rdd)$, by Lemma \ref{L1}, part $(i)$.
This implies that the short-time Fourier transform $V_{\Phi_\tau} \sigma$  is well-defined for
$\sigma\in M_{1\otimes v\circ J^{-1}}^{\infty,1}\left(\mathbb{R}^{2d}\right)$ (cf. \cite[Theorem 11.3.7]{Grochenig_2001_Foundations}).
We now define
$ \tilde{H}_{\tau} (v)=\sup_{u\in\rdd} |V_{\Phi_\tau} \sigma(u,v)|$. By definition of
$M_{1\otimes v\circ J^{-1}}^{\infty,1}\left(\mathbb{R}^{2d}\right)$, we have $\tilde{H}_{\tau}\in L^1_{v\circ J^{-1}}(\rdd)$, so that Lemma \ref{lem:STFT-gaborm} implies
\begin{align*}
\left|\left\langle \opt \left(\sigma\right)\pi\left(z\right)\varphi,\pi\left(w\right)\varphi\right\rangle \right|&=\left|{V}_{\Phi_{\tau}}\sigma\left(\mathcal{T}_{\tau}\left(z,w\right),J\left(w-z\right)\right)\right|\\
&\leq \sup_{u\in\rdd} \left|{V}_{\Phi_{\tau}}\sigma\left(u,J\left(w-z\right)\right)\right| \\
& = \tilde{H}_{\tau} \left( J \left( w-z \right) \right) . 
\end{align*}

Setting $H_\tau =\tilde{H}_{\tau}\circ J$ we obtain the claim.
\end{proof}

\begin{corollary}\label{corteor41}
	Under the hypotheses of Theorem \ref{teor41}, let $\tau \in [0,1]$ be fixed and assume that
	$T\,:\,\mathcal{S}\left(\mathbb{R}^{d}\right)\rightarrow\mathcal{S}'\left(\mathbb{R}^{d}\right)$
	is continuous and satisfies one of the following conditions:
	\begin{itemize}
		\item[$(i)$] $\left|\left\langle T\pi\left(z\right)\varphi,\pi\left(w\right)\varphi\right\rangle \right|\le H\left(w-z\right)\quad\forall w,z\in\mathbb{R}^{2d}$
		for some $H\in L_{v}^{1}$. 
		\item[$(ii)$] $\left|\left\langle T\pi\left(\mu\right)\varphi,\pi\left(\lambda\right)\varphi\right\rangle \right|\le h\left(\lambda-\mu\right)\quad\forall\lambda,\mu\in\Lambda$
		for some $h\in\ell_{v}^{1}$. 
	\end{itemize}
	Therefore, $T=\opt \left(\sigma\right)$ for some symbol
	$\sigma\in M_{v\circ J^{-1}}^{\infty,1}\left(\mathbb{R}^{2d}\right)$. 
\end{corollary}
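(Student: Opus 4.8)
The plan is to deduce the statement from Theorem~\ref{teor41}, the only preliminary work being to realize $T$ as a $\tau$-pseudodifferential operator with a tempered symbol. Since $T\colon\cS(\rd)\to\cS'(\rd)$ is continuous, the Schwartz kernel theorem (Theorem~\ref{optreps}) provides a kernel $k\in\cS'(\rdd)$. As $\mathfrak{T}_\tau$ and $\mathcal{F}_2$ are continuous bijections of $\cS'(\rdd)$, for the \emph{prescribed} value of $\tau$ I set $\sigma:=\mathcal{F}_2\mathfrak{T}_\tau k\in\cS'(\rdd)$; Proposition~\ref{optker} then gives $T=\opt (\sigma)$. I stress that this representation is available for the given $\tau$, not merely for some $\tau$, precisely because of the invertibility just invoked.

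Once $\sigma\in\cS'(\rdd)$ is produced, I would simply observe that hypotheses $(i)$ and $(ii)$ of the Corollary are, word for word, conditions $(ii)$ and $(iii)$ of Theorem~\ref{teor41} for this $\sigma$: indeed $\langle T\pi(z)\varphi,\pi(w)\varphi\rangle=\langle\opt (\sigma)\pi(z)\varphi,\pi(w)\varphi\rangle$, and the dominating objects $H\in L^1_v$ and $h\in\ell^1_v$ play the role of $H_\tau$ and $h_\tau$ there. The implications $(ii)\Rightarrow(i)$ and $(iii)\Rightarrow(i)$ of Theorem~\ref{teor41} then deliver at once $\sigma\in M^{\infty,1}_{1\otimes v\circ J^{-1}}(\rdd)$, which is the asserted membership.

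For the continuous case $(i)$ one can also argue by hand, which is instructive: formula \eqref{eq:STFT as gaborm} identifies $|V_{\Phi_\tau}\sigma(x,y)|$ with $|\langle T\pi(z(x,y))\varphi,\pi(w(x,y))\varphi\rangle|$, and a direct check using \eqref{eq:z,w} gives $w(x,y)-z(x,y)=J^{-1}y$, independently of $x$. Hypothesis $(i)$ therefore yields $\sup_{x\in\rdd}|V_{\Phi_\tau}\sigma(x,y)|\le H(J^{-1}y)$; integrating in $y$ against $v\circ J^{-1}$ and substituting $\eta=J^{-1}y$ (with $|\det J|=1$) gives $\|\sigma\|_{M^{\infty,1}_{1\otimes v\circ J^{-1}}}\le\|H\|_{L^1_v}<\infty$. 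The discrete case $(ii)$ does not reduce so cheaply, because one must reconstruct $V_{\Phi_\tau}\sigma$ on all of $\rdd$ from its values on $\Lambda$; this is exactly where the Gabor frame property of $\mathcal{G}(\varphi,\Lambda)$ is used, following the scheme of Theorem~\ref{teor41} and ultimately of \cite[Theorem~3.2]{Grochenig_2006_Time}.

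The heart of the matter is thus not an estimate but the structural reduction: guaranteeing that $T=\opt (\sigma)$ for the given $\tau$ with $\sigma\in\cS'(\rdd)$. After this, the desired decay of $\sigma$ is exactly the conclusion of Theorem~\ref{teor41}, so nothing further has to be proved. I would therefore adopt the abstract route, relegating the explicit computation above to a remark illustrating where the relation $w-z=J^{-1}y$ — hence the weight $v\circ J^{-1}$ — originates.
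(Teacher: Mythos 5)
Your proof is correct and follows essentially the same route as the paper's own argument: realize $T=\opt\left(\sigma\right)$ with $\sigma\in\cS'(\rdd)$ via the kernel theorem (Theorem \ref{optreps}, resp.\ Proposition \ref{optker}) and then invoke the implications $(ii)\Rightarrow(i)$ and $(iii)\Rightarrow(i)$ of Theorem \ref{teor41}. Your insistence that the representation is available for the \emph{prescribed} $\tau$ --- because $\mathfrak{T}_{\tau}$ and $\mathcal{F}_{2}$ are continuous bijections of $\cS'(\rdd)$ for every $\tau\in[0,1]$ --- is a welcome sharpening of the paper's looser phrasing ``for some $\tau$'', and your direct check $w\left(x,y\right)-z\left(x,y\right)=J^{-1}y$ with the resulting bound $\left\Vert \sigma\right\Vert _{M_{1\otimes v\circ J^{-1}}^{\infty,1}}\le\left\Vert H\right\Vert _{L_{v}^{1}}$ is accurate but supplementary to the black-box appeal to Theorem \ref{teor41}.
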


\begin{proof}
	Theorem \ref{optreps} implies that $T= \opt \left(\sigma \right)$ for some $\tau \in [0,1]$ and some symbol $\sigma \in \mathcal{S}' (\rdd)$. Then, the claim immediately follows from Theorem \ref{teor41}.
\end{proof}

For $\tau\in\left(0,1\right)$ (but also for extremal
values, see Theorem \ref{thm:almdiag weak} in the sequel), we are able to prove
a similar characterization  for symbols in the Wiener amalgam space $W\left(\mathcal{F}L^{\infty},L_{v}^{1}\right)$
via almost diagonalization. 

\begin{theorem}\label{thm:almost diag}
Let $v$ be an admissible weight function on $\mathbb{R}^{2d}$. Consider  $\varphi\in \cS(\rd)\setminus\{0\}$.
For any $\tau\in\left(0,1\right)$, the following properties are equivalent:
\begin{enumerate}
\item[$(i)$] $\sigma\in W\left(\mathcal{F}L^{\infty},L_{v\circ\mathcal{B}_{\tau}}^{1}\right)\left(\mathbb{R}^{2d}\right)$.
\item[$(ii)$] $\sigma\in\mathcal{S}'\left(\mathbb{R}^{2d}\right)$ and there exists
a function $H_\tau\in L_{v}^{1}\left(\mathbb{R}^{2d}\right)$ such that
\begin{equation}
\left|\left\langle \opt \left(\sigma\right)\pi\left(z\right)\varphi,\pi\left(w\right)\varphi\right\rangle \right|\le H_\tau\left(w-\mathcal{U}_{\tau}z\right)\qquad\forall w,z\in\mathbb{R}^{2d},\label{eq:almdiag}
\end{equation}
where the matrices $\mathcal{B}_{\tau} $ and  $\mathcal{U}_{\tau}$ are defined in \eqref{BT} and \eqref{UT}, respectively.
\end{enumerate}
\end{theorem}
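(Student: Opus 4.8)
The plan is to run the argument of Theorem \ref{teor41} but to read the short-time Fourier transform representation of the matrix coefficients (Lemma \ref{lem:STFT-gaborm}) through its \emph{space} variable instead of its frequency variable. Everything hinges on one algebraic identity linking the maps \eqref{Ttau}, \eqref{BT} and \eqref{UT}: a direct computation (using $\mathcal{U}_{\tau}=I-\mathcal{B}_{\tau}$) gives
\[
\mathcal{B}_{\tau}\,\mathcal{T}_{\tau}\left(w,z\right)=w-\mathcal{U}_{\tau}z,\qquad w,z\in\mathbb{R}^{2d},
\]
so that the graph $\{w=\mathcal{U}_{\tau}z\}$ appearing in \eqref{eq:almdiag} corresponds, under $\mathcal{B}_{\tau}$, exactly to the space variable $\mathcal{T}_{\tau}(w,z)$ produced in the proof of Lemma \ref{lem:STFT-gaborm}. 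Throughout I take $\Phi_{\tau}=W_{\tau}(\varphi,\varphi)$ as analyzing window for $\sigma$; since $\varphi\in\mathcal{S}(\mathbb{R}^{d})$ we have $\Phi_{\tau}\in\mathcal{S}(\mathbb{R}^{2d})$, hence $\Phi_{\tau}$ is an admissible window for $W(\mathcal{F}L^{\infty},L_{v\circ\mathcal{B}_{\tau}}^{1})$ by Theorem \ref{admwindW} (cf. also Lemma \ref{L1}$(ii)$ with $p=1$), and the Wiener amalgam norm of $\sigma$ may be computed with it.

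For $(i)\Rightarrow(ii)$ I would set $\tilde{H}_{\tau}(z)=\sup_{\zeta\in\mathbb{R}^{2d}}|V_{\Phi_{\tau}}\sigma(z,\zeta)|$; membership $\sigma\in W(\mathcal{F}L^{\infty},L_{v\circ\mathcal{B}_{\tau}}^{1})$ means exactly $\int_{\mathbb{R}^{2d}}\tilde{H}_{\tau}(z)\,v(\mathcal{B}_{\tau}z)\,dz<\infty$. Lemma \ref{lem:STFT-gaborm} together with the identity above then yields
\[
\left|\left\langle \mathrm{Op}_{\tau}(\sigma)\pi(z)\varphi,\pi(w)\varphi\right\rangle \right|=\left|V_{\Phi_{\tau}}\sigma\left(\mathcal{T}_{\tau}(w,z),J(w-z)\right)\right|\le\tilde{H}_{\tau}\left(\mathcal{T}_{\tau}(w,z)\right)=\tilde{H}_{\tau}\left(\mathcal{B}_{\tau}^{-1}(w-\mathcal{U}_{\tau}z)\right),
\]
so that $H_{\tau}:=\tilde{H}_{\tau}\circ\mathcal{B}_{\tau}^{-1}$ gives the bound \eqref{eq:almdiag}. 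Finally the substitution $u=\mathcal{B}_{\tau}s$, with $|\det\mathcal{B}_{\tau}|=[\tau(1-\tau)]^{-d}$, turns $\|H_{\tau}\|_{L_{v}^{1}}$ into $[\tau(1-\tau)]^{-d}\,\|\sigma\|_{W(\mathcal{F}L^{\infty},L_{v\circ\mathcal{B}_{\tau}}^{1})}$, which is finite; hence $H_{\tau}\in L_{v}^{1}$.

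For the converse $(ii)\Rightarrow(i)$ I would run the same identity backwards through \eqref{eq:STFT as gaborm}. With $z(x,y)$, $w(x,y)$ as in \eqref{eq:z,w} one has $x=\mathcal{T}_{\tau}(w(x,y),z(x,y))$, whence $w(x,y)-\mathcal{U}_{\tau}z(x,y)=\mathcal{B}_{\tau}x$, and substituting into the hypothesis \eqref{eq:almdiag} produces the crucial \emph{$y$-independent} bound
\[
\left|V_{\Phi_{\tau}}\sigma(x,y)\right|=\left|\left\langle \mathrm{Op}_{\tau}(\sigma)\pi(z(x,y))\varphi,\pi(w(x,y))\varphi\right\rangle \right|\le H_{\tau}(\mathcal{B}_{\tau}x),\qquad\forall x,y\in\mathbb{R}^{2d}.
\]
Taking the supremum over $y$ and integrating against $v\circ\mathcal{B}_{\tau}$, the reverse change of variables gives $\|\sigma\|_{W(\mathcal{F}L^{\infty},L_{v\circ\mathcal{B}_{\tau}}^{1})}\le[\tau(1-\tau)]^{d}\,\|H_{\tau}\|_{L_{v}^{1}}<\infty$, which with $\sigma\in\mathcal{S}'$ yields $(i)$.

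The conceptual point — and the step I expect to require the most care — is that the \emph{$y$-independence} of the bound on $|V_{\Phi_{\tau}}\sigma(x,y)|$ is precisely what forces the Wiener amalgam space $W(\mathcal{F}L^{\infty},L^{1})$ (supremum in the STFT-frequency $y$, integrability in the STFT-space $x$) rather than a modulation space; this is where the argument genuinely departs from the $M^{\infty,1}$ case of Theorem \ref{teor41}, where instead the $L^{1}$-decay lives in the frequency variable $J(w-z)$ and produces the true diagonal $w-z$. The only real bookkeeping is to keep the correct argument order in $\mathcal{T}_{\tau}$ (note $\mathcal{T}_{\tau}(w,z)=\mathcal{T}_{1-\tau}(z,w)$, so the wrong order would replace $\mathcal{U}_{\tau}$ by its inverse $\mathcal{U}_{1-\tau}$) and to carry the Jacobian $[\tau(1-\tau)]^{\pm d}$; both are elementary but the former is easy to get backwards.
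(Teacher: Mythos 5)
Your proof is correct and follows essentially the same route as the paper's own argument: both directions rest on Lemma \ref{lem:STFT-gaborm} combined with the identity $\mathcal{B}_{\tau}\mathcal{T}_{\tau}\left(w,z\right)=w-\mathcal{U}_{\tau}z$, with $\tilde{H}_{\tau}\left(x\right)=\sup_{y}\left|V_{\Phi_{\tau}}\sigma\left(x,y\right)\right|$, $H_{\tau}=\tilde{H}_{\tau}\circ\mathcal{B}_{\tau}^{-1}$, and the inverse substitution via $z\left(x,y\right),w\left(x,y\right)$ from \eqref{eq:z,w}, exactly as in the paper. The only differences are cosmetic: you carry the Jacobian $\left[\tau\left(1-\tau\right)\right]^{\pm d}$ explicitly where the paper writes $\Vert\tilde{H}_{\tau}\circ\mathcal{B}_{\tau}^{-1}\Vert_{L_{v}^{1}}\asymp\Vert\tilde{H}_{\tau}\Vert_{L_{v\circ\mathcal{B}_{\tau}}^{1}}$, and you (rightly) flag the argument-order pitfall $\mathcal{T}_{\tau}\left(w,z\right)=\mathcal{T}_{1-\tau}\left(z,w\right)$, which the paper's statement of Lemma \ref{lem:STFT-gaborm} glosses over.
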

\begin{proof}
$(i)\Rightarrow (ii)$. If $\f \in \cS(\rd)\subset M^1_v\left(\mathbb{R}^{d}\right) \cap M^1_{v\circ \cU_{\tau}(\rd)}$, $\f \neq 0$, then by Lemma \ref{L1}, part $(ii)$, with  $\Phi_\tau= W_\tau (\f,\f) \in W(\cF L^1, L^1_{v\circ \mathcal{B}_{\tau}})$. Take $\sigma\in W\left(\mathcal{F}L^{\infty},L_{v\circ\mathcal{B}_{\tau}}^{1}\right)$, then ${V}_{\Phi_{\tau}}\sigma$ is well defined  (cf. Theorem \ref{admwindW}) and
\[
\tilde{H}_\tau\left(x\right)=\sup_{y\in\mathbb{R}^{2d}}\left|{V}_{\Phi_{\tau}}\sigma\left(x,y\right)\right|\in L_{v\circ\mathcal{B}_{\tau}}^{1}\left(\mathbb{R}^{2d}\right).
\]
From Lemma \ref{lem:STFT-gaborm} we infer
\begin{flalign*}
\left|\left\langle \opt \left(\sigma\right)\pi\left(z\right)\varphi,\pi\left(w\right)\varphi\right\rangle \right| & =\left|{V}_{\Phi_{\tau}}\sigma\left(\mathcal{T}_{\tau}\left(w,z\right),J\left(w-z\right)\right)\right|\\
 & \le\sup_{y\in\mathbb{R}^{2d}}\left|{V}_{\Phi_{\tau}}\sigma\left(\mathcal{T}_{\tau}\left(w,z\right),y\right)\right|\\
 & =\tilde{H}_\tau\left(\mathcal{T}_{\tau}\left(w,z\right)\right).
\end{flalign*}
Notice that
\[
\mathcal{B}_{\tau}\left(\mathcal{T}_{\tau}\left(w,z\right)\right)=\left(\begin{array}{c}
w_{1}+\frac{\tau}{1-\tau}z_{1}\\
w_{2}+\frac{1-\tau}{\tau}z_{2}
\end{array}\right)=w-\mathcal{U}_{\tau}z,
\]
and thus $\tilde{H}_{\tau}\left(\mathcal{T}_{\tau}\left(w,z\right)\right)=\tilde{H}_{t}\left(\mathcal{B}_{\tau}^{-1}\left(w-\mathcal{U}_{\tau}z\right)\right)$. Define  $H_\tau= \tilde{H}_{\tau}\circ\mathcal{B}_{\tau}^{-1}$, then $H_\tau\in L_{v}^{1}(\rdd)$
since $\left\Vert H_\tau\right\Vert _{L_{v}^{1}}=\Vert \tilde{H}_{\tau}\circ\mathcal{B}_{\tau}^{-1}\Vert _{L_{v}^{1}}\asymp\Vert \tilde{H}_{\tau}\Vert _{L_{v\circ\mathcal{B}_{\tau}}^{1}}<\infty$. 

Let us now show $(ii) \Rightarrow (i)$. Assume that $\sigma\in\mathcal{S}'\left(\mathbb{R}^{2d}\right)$
and that $\opt (\sigma)$ is almost diagonalized by the
time-frequency shifts with dominating function $H_\tau\in L_{v}^{1}\left(\mathbb{R}^{2d}\right)$
as in (\ref{eq:almdiag}). Equation (\ref{eq:STFT as gaborm}) and
the results in the previous step (in particular, set $\tilde{H}_{\tau}= H_\tau\circ\mathcal{B}_{\tau}$
and then $H_\tau\left(w-\mathcal{U}_{\tau}z\right)=\tilde{H}_{\tau}\left(\mathcal{T}_{\tau}\left(w,z\right)\right)$
with $\mathcal{U}_{\tau}$ in \eqref{UT})
 allow to write 
\begin{flalign*}
\left|{V}_{\Phi_{\tau}}\sigma\left(x,y\right)\right| & =\left|\left\langle \opt \left(\sigma\right)\pi\left(z\left(x,y\right)\right)\varphi,\pi\left(w\left(x,y\right)\right)\varphi\right\rangle \right|\\
 & \leq H_\tau\left(w\left(x,y\right)-\mathcal{U}_{\tau}z\left(x,y\right)\right)\\
 & =\tilde{H}_{\tau}\left(\mathcal{T}_{\tau}\left(w\left(x,y\right),z\left(x,y\right)\right)\right)
\end{flalign*}
and since by construction $\mathcal{T}_{\tau}\left(w\left(x,y\right),z\left(x,y\right)\right)=x$,
we finally have 
\[
\left|{V}_{\Phi_{\tau}}\sigma\left(x,y\right)\right|\le \tilde{H}_{\tau}\left(x\right),\qquad\forall x\in\mathbb{R}^{2d}.
\]
Therefore, 
\begin{flalign*}
\left\Vert \sigma\right\Vert _{W\left(\mathcal{F}L^{\infty},L_{v\circ\mathcal{B}_{\tau}}^{1}\right)} & =\int_{\mathbb{R}^{2d}}\sup_{y\in\mathbb{R}^{2d}}\left|{V}_{\Phi_{\tau}}\sigma\left(x,y\right)\right|v\left(\mathcal{B}_{\tau}\left(x\right)\right)d x\\
 & \le\int_{\mathbb{R}^{2d}}\tilde{H}_{\tau}\left(x\right)v\left(\mathcal{B}_{\tau}\left(x\right)\right)d x\leq\Vert \tilde{H}_{\tau}\Vert _{L_{v\circ\mathcal{B}_{\tau}}^{1}}\asymp\Vert H_\tau\Vert _{L_{v}^{1}}<\infty
\end{flalign*}
and thus $\sigma\in W\left(\mathcal{F}L^{\infty},L_{v\circ\mathcal{B}_{\tau}}^{1}\right)$.
\end{proof}

\begin{example}\label{esempio}
	Consider $\sigma=\delta\in W\left(\mathcal{F}L^{\infty},L^{1}\right)(\rdd)$.  In this case, using formula \eqref{CovtauWD},
	\begin{align*}
	|\langle \mathrm{Op}_\tau(\delta) \pi\left(z\right)\varphi,\pi\left(w\right)\varphi \rangle | & =|\langle\delta, W_\tau(\pi\left(w\right)\varphi ,\pi\left(z\right)\varphi )\rangle|\\
	&= |\langle\delta, c_\tau M_{J(w-z)} T_{\mathcal{T}_{\tau}\left(w,z\right)} W_{\tau}\left(\f,\f\right)\ra|\\
	& = |T_{\mathcal{T}_{\tau}\left(w,z\right)} W_{\tau}\left(\f,\f\right)(0)|\\
	&=|W_{\tau}\left(\f,\f\right)(-\mathcal{T}_{\tau}\left(w,z\right))\\
	&=|W_{\tau}\left(\f,\f\right)(- \mathcal{B}_{\tau}^{-1}(w-\mathcal{U}_\tau z))|
	\end{align*}
	Choosing $H_\tau(z)= |W_{\tau}\left(\f,\f\right)(- \mathcal{B}_{\tau}^{-1}(z))|$ we obtain \eqref{eq:almdiag}, which  reduces to an equality in this case.
\end{example}

We remark that in this framework the discrete characterization of Theorem \ref{teor41} is lost. The main obstruction is the following: for a given lattice $\Lambda$, the inclusion  $\mathcal{U}_\tau \Lambda \subseteq \Lambda$ holds if and only if  $\mathcal{U}_\tau=\mathcal{U}_{1/2}=-I_{2d\times 2d}$, the (minus) identity matrix.  In this particular framework, the matrix $\mathcal{B}_{1/2}$ becomes
$\mathcal{B}_{1/2}=2 I_{2d\times 2d}$ and Theorem \ref{thm:almost diag} can be improved as follows.

\begin{corollary}\label{discreteWeyl}
Let $v$ be an admissible weight function on $\mathbb{R}^{2d}$ and set $v_2=v\circ 2I$. Consider  $\varphi\in \cS\left(\mathbb{R}^{d}\right)\setminus\{0\}$  such that  $\mathcal{G}\left(\varphi,\Lambda\right)$ is a Gabor frame for $L^{2}\left(\mathbb{R}^{d}\right)$.
For Weyl operators, the following properties are equivalent:
\begin{enumerate}
	\item[$(i)$] $\sigma\in W\left(\mathcal{F}L^{\infty},L_{v_2}^{1}\right)\left(\mathbb{R}^{2d}\right)$.
	\item[$(ii)$] $\sigma\in\mathcal{S}'\left(\mathbb{R}^{2d}\right)$ and there exists
	a function $H \in L_{v}^{1}\left(\mathbb{R}^{2d}\right)$ such that
	\begin{equation}
	\left|\left\langle \mathrm{Op}_{1/2}\left(\sigma\right)\pi\left(z\right)\varphi,\pi\left(w\right)\varphi\right\rangle \right|\le H \left(w+z\right)\qquad\forall w,z\in\mathbb{R}^{2d},\label{eq:almdiag weyl}
	\end{equation}
		\item[$(iii)$] $\sigma\in\mathcal{S}'\left(\mathbb{R}^{2d}\right)$ and there exists
		a sequence $h \in\ell_{v}^{1}\left(\Lambda\right)$ such that
		\begin{equation}
		\left|\left\langle \mathrm{Op}_{1/2}\left(\sigma\right)\pi\left(\mu\right)\varphi,\pi\left(\lambda\right)\varphi\right\rangle \right|\le h \left(\lambda+\mu\right)\qquad\forall\lambda,\mu\in\Lambda.\label{eq:almdiag discr2}
		\end{equation}
\end{enumerate}	
\end{corollary}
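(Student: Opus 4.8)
The plan is to reduce the statement to the continuous characterization already proved, so that the only genuinely new content is the discrete equivalence, which is exactly where the Gabor frame hypothesis $\mathcal{G}(\varphi,\Lambda)$ enters. First I would dispose of $(i)\Leftrightarrow(ii)$, which is nothing but Theorem \ref{thm:almost diag} specialized to $\tau=1/2$. Since $\mathcal{U}_{1/2}=-I_{2d\times2d}$ by \eqref{UT}, one has $w-\mathcal{U}_{1/2}z=w+z$, so \eqref{eq:almdiag} turns into \eqref{eq:almdiag weyl}; and since $\mathcal{B}_{1/2}=2I_{2d\times2d}$ by \eqref{BT}, the symbol space $W(\mathcal{F}L^\infty,L^1_{v\circ\mathcal{B}_{1/2}})$ is precisely $W(\mathcal{F}L^\infty,L^1_{v_2})$ with $v_2=v\circ 2I$. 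Thus no new computation is required for this equivalence.

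It remains to establish $(i)\Leftrightarrow(iii)$, and the decisive structural fact is that $\mathcal{U}_{1/2}$ preserves the lattice, $\mathcal{U}_{1/2}\Lambda=-\Lambda=\Lambda$, whereas for $\tau\neq1/2$ the point $w-\mathcal{U}_\tau z$ leaves $\Lambda$ (this is the obstruction recorded before the statement, and explains why only the Weyl case admits a discrete version). For $(i)\Rightarrow(iii)$ I would argue as in the discrete part of Theorem \ref{teor41} and of \cite[Theorem 3.2]{Grochenig_2006_Time}: restrict the continuous estimate of $(ii)$ to $z=\mu\in\Lambda$, $w=\lambda\in\Lambda$, which yields the argument $w+z=\lambda+\mu\in\Lambda$ legitimately because $\mathcal{U}_{1/2}\Lambda=\Lambda$. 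To obtain an honest sequence in $\ell^1_v(\Lambda)$ rather than a mere restriction of an $L^1_v$ function, I would use that the dominating function built in the proof of Theorem \ref{thm:almost diag} is the $\mathcal{B}_{1/2}$-dilate of the envelope $\tilde H(x)=\sup_{y}|V_{\Phi_{1/2}}\sigma(x,y)|$, where $\Phi_{1/2}=W_{1/2}(\varphi,\varphi)\in\mathcal{S}(\rdd)$ because $\varphi\in\mathcal{S}(\rd)$. The self-reproducing inequality $|V_{\Phi_{1/2}}\sigma|\le \|\Phi_{1/2}\|_2^{-2}\,|V_{\Phi_{1/2}}\sigma|*|V_{\Phi_{1/2}}\Phi_{1/2}|$ with $V_{\Phi_{1/2}}\Phi_{1/2}\in\mathcal{S}$ shows that the local-sup (maximal) envelope of $\tilde H$ over $\Lambda$-cells is controlled by a convolution, hence lies in the weighted Wiener amalgam $W(L^\infty,L^1_{v_2})$; its samples over $\Lambda$ therefore form a sequence $h\in\ell^1_v(\Lambda)$, which gives \eqref{eq:almdiag discr2}.

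The converse $(iii)\Rightarrow(i)$ is the main obstacle and the step that genuinely consumes the frame hypothesis. Here I would follow the reconstruction blueprint of \cite[Theorem 3.2]{Grochenig_2006_Time}: by \eqref{eq:STFT as gaborm} at $\tau=1/2$ one has $|V_{\Phi_{1/2}}\sigma(x,y)|=|\langle \mathrm{Op}_{1/2}(\sigma)\pi(z(x,y))\varphi,\pi(w(x,y))\varphi\rangle|$ with $z(x,y),w(x,y)$ given by \eqref{eq:z,w} and satisfying $w(x,y)+z(x,y)=2x=\mathcal{B}_{1/2}x$. Expanding $\pi(z(x,y))\varphi$ and $\pi(w(x,y))\varphi$ through the canonical Gabor frame expansion associated with $\mathcal{G}(\varphi,\Lambda)$, the continuous envelope $\tilde H(x)=\sup_y|V_{\Phi_{1/2}}\sigma(x,y)|$ gets dominated by a convolution of the discrete data $h$ against the cross-correlation kernel $\nu\mapsto \sup|\langle\pi(\nu)\varphi,\pi(\cdot)\varphi\rangle|$; since $\varphi\in\mathcal{S}\subset M^1_v$ this kernel lies in $\ell^1_v$, and the $v$-weighted Young inequality yields $\tilde H\in L^1_{v_2}$, i.e. $\sigma\in W(\mathcal{F}L^\infty,L^1_{v_2})$. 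The delicate points I expect are the bookkeeping of the $\mathcal{B}_{1/2}=2I$ dilation inside the weight $v_2$ and the verification that sampling and reconstruction remain compatible with the submultiplicativity and $v$-moderateness of the weights; both are handled exactly as in the Weyl case treated in \cite{Grochenig_2006_Time}.
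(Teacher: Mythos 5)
Your proposal is correct and follows essentially the same route as the paper: the paper's proof likewise obtains $(i)\Leftrightarrow(ii)$ by specializing Theorem \ref{thm:almost diag} to $\tau=1/2$ (where $\mathcal{U}_{1/2}=-I_{2d\times 2d}$ and $\mathcal{B}_{1/2}=2I_{2d\times 2d}$), and settles $(i)\Rightarrow(iii)$ and $(iii)\Rightarrow(i)$ by invoking the pattern of \cite[Theorem 3.2]{Grochenig_2006_Time}, which is precisely the amalgam-envelope sampling and Gabor-frame reconstruction argument you spell out. The one point you leave tacit --- that the canonical dual window belongs to $M^1_v$, which is where the GRS condition on the admissible weight is actually consumed --- is equally tacit in the paper's citation, so nothing is missing relative to the paper's own argument.
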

\begin{proof} The equivalence $(i)\Leftrightarrow (ii)$ is the consequence of Theorem \ref{thm:almost diag}, whereas the implications $(i)\Rightarrow (iii)$ and $(iii)\Rightarrow (i)$ follow the same pattern as in the proof of \cite{Grochenig_2006_Time}.
\end{proof}

Therefore, the symmetry of the Weyl quantization appears more powerful in this framework.
\begin{remark}
	Observe that the function space for  the window $\f$ in Theorem \ref{thm:almost diag} and Corollary \ref{discreteWeyl} can be extended from $\cS(\rd)\setminus\{0\}$ to $M^1_v(\rd)\cap M^1_{v\circ\mathcal{U}_\tau}(\rd)$, cf. Lemma \ref{L1}.
\end{remark}
The statement and the proof of Theorem \ref{thm:almost diag} are
not well suited for the degenerate cases $\tau=0$ and $\tau=1$ (notice
that $\mathcal{B}_{\tau}$ and $\mathcal{U}_{\tau}$ are not even
defined), thus the boundedness results which follow cannot be reproduced
in this context. However, a weaker version of that result can be proved
for any $\tau\in\left[0,1\right]$. 
\begin{theorem}
	\label{thm:almdiag weak} Consider an admissible weight $v$ on $\mathbb{R}^{2d}$ and
	a non-zero window $\varphi\in\mathcal{S}\left(\mathbb{R}^{d}\right)$.
	For any $\tau\in\left[0,1\right]$, the following properties are equivalent:
	\begin{enumerate}
		\item[$(i)$] $\sigma\in W\left(\mathcal{F}L^{\infty},L_{v}^{1}\right)\left(\mathbb{R}^{2d}\right)$.
		\item[$(ii)$] $\sigma\in\mathcal{S}'\left(\mathbb{R}^{2d}\right)$ and there exists
		a function $H_{\tau}\in L_{v}^{1}\left(\mathbb{R}^{2d}\right)$ such that
		\begin{equation}
		\left|\left\langle \opt \left(\sigma\right)\pi\left(z\right)\varphi,\pi\left(w\right)\varphi\right\rangle \right|\le H_{\tau}\left(\mathcal{T}_{\tau}\left(w,z\right)\right),\label{eq:almdiag weak-1}
		\end{equation}
	\end{enumerate}
	where $\mathcal{T}_{\tau}$ is defined in \eqref{Ttau}.
\end{theorem}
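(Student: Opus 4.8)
The plan is to mirror the proof of Theorem~\ref{thm:almost diag}, replacing the weight $v\circ\mathcal{B}_{\tau}$ by the plain admissible weight $v$; this is precisely the modification that makes the argument survive at the endpoints $\tau=0,1$, where $\mathcal{B}_{\tau}$ and $\mathcal{U}_{\tau}$ are undefined. The preliminary point I would secure first is that, for every $\tau\in[0,1]$ and every nonzero $\varphi\in\mathcal{S}(\rd)$, the window $\Phi_{\tau}=W_{\tau}(\varphi,\varphi)$ belongs to $\mathcal{S}(\rdd)$: at the endpoints $\Phi_{0},\Phi_{1}$ are the Rihaczek and conjugate-Rihaczek distributions, whose magnitude factors as $|\varphi(x)|\,|\hat{\varphi}(\omega)|$ and is manifestly Schwartz. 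In particular $\Phi_{\tau}\in W(\mathcal{F}L^{1},L^{1}_{v})(\rdd)$, so by Theorem~\ref{admwindW} it is an admissible window for $W(\mathcal{F}L^{\infty},L^{1}_{v})(\rdd)$, and I may compute that norm as $\int_{\rdd}\sup_{y}|V_{\Phi_{\tau}}\sigma(x,y)|\,v(x)\,dx$.

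For $(i)\Rightarrow(ii)$ I would set $H_{\tau}(x)=\sup_{y\in\rdd}|V_{\Phi_{\tau}}\sigma(x,y)|$, so that membership $\sigma\in W(\mathcal{F}L^{\infty},L^{1}_{v})$ reads exactly $H_{\tau}\in L^{1}_{v}$. Invoking Lemma~\ref{lem:STFT-gaborm}, which identifies $\langle\opt (\sigma)\pi(z)\varphi,\pi(w)\varphi\rangle$ with $V_{\Phi_{\tau}}\sigma$ evaluated at $(\mathcal{T}_{\tau}(w,z),J(w-z))$, and bounding the frequency slot by the supremum, yields $|\langle\opt (\sigma)\pi(z)\varphi,\pi(w)\varphi\rangle|\le H_{\tau}(\mathcal{T}_{\tau}(w,z))$, which is \eqref{eq:almdiag weak-1}.

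For the converse $(ii)\Rightarrow(i)$ I would invert this dictionary through the reciprocal identity \eqref{eq:STFT as gaborm} and the explicit change of variables \eqref{eq:z,w}. A short check shows the points $z(x,y),w(x,y)$ defined there satisfy $\mathcal{T}_{\tau}(w(x,y),z(x,y))=x$ and $J(w(x,y)-z(x,y))=y$, so the hypothesis \eqref{eq:almdiag weak-1} transfers to $|V_{\Phi_{\tau}}\sigma(x,y)|\le H_{\tau}(x)$ for every $y$. Taking $\sup_{y}$ and integrating against $v$ then bounds $\|\sigma\|_{W(\mathcal{F}L^{\infty},L^{1}_{v})}$ by $\|H_{\tau}\|_{L^{1}_{v}}<\infty$, giving $\sigma\in W(\mathcal{F}L^{\infty},L^{1}_{v})$.

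The step I would watch most carefully is the uniformity in $\tau$ at the endpoints. Since Lemma~\ref{lem:STFT-gaborm} is already stated for the whole range $[0,1]$—its proof treating $\tau=0,1$ separately via the covariance identities \eqref{CovRD} and \eqref{CovconjRD}—the only genuinely $\tau$-sensitive ingredient remaining is the admissibility of $\Phi_{\tau}$ as a window for $W(\mathcal{F}L^{\infty},L^{1}_{v})$ uniformly in $\tau$. This is exactly where the plain weight $v$ pays off: unlike in Theorem~\ref{thm:almost diag} I need no mapping property of $W_{\tau}$ into a $\mathcal{B}_{\tau}$-weighted amalgam space (so Lemma~\ref{L1}, which requires $\tau\in(0,1)$, is bypassed), and can rely solely on $\Phi_{\tau}\in\mathcal{S}(\rdd)$. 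Once that is settled, no further $\tau$-dependence intervenes and both implications close by the supremum-and-integrate manipulations above.
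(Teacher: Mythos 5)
Your proposal is correct and follows essentially the same route as the paper: the same choice $H_{\tau}(x)=\sup_{y}\left|V_{\Phi_{\tau}}\sigma(x,y)\right|$ combined with Lemma \ref{lem:STFT-gaborm} for $(i)\Rightarrow(ii)$, and the inverse dictionary \eqref{eq:STFT as gaborm} with the identity $\mathcal{T}_{\tau}\left(w(x,y),z(x,y)\right)=x$ followed by the supremum-and-integrate step for $(ii)\Rightarrow(i)$. Your added justification that $\Phi_{\tau}=W_{\tau}(\varphi,\varphi)\in\mathcal{S}(\mathbb{R}^{2d})$ serves as an admissible window uniformly in $\tau\in[0,1]$ (bypassing Lemma \ref{L1}) only makes explicit what the paper leaves implicit, and matches the level of rigor the paper itself adopts in the proof of Theorem \ref{thm:almost diag}.
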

\begin{proof}
	$(i)\Rightarrow(ii)$. Take $\sigma\in W\left(\mathcal{F}L^{\infty},L_{v}^{1}\right)$
	and set 
	\[
	H_{\tau}\left(x\right)=\sup_{y\in\mathbb{R}^{2d}}\left|{V}_{\Phi_{\tau}}\sigma\left(x,y\right)\right|.
	\]
	By definition of $W\left(\mathcal{F}L^{\infty},L_{v}^{1}\right)$
	we have $H_{\tau}\in L_{v}^{1}\left(\mathbb{R}^{2d}\right)$. Since the
	linear map $\left(w,z\right)\mapsto J\left(w-z\right)$ is surjective,
	from Lemma \ref{lem:STFT-gaborm} we infer
	
	\begin{flalign*}
	\left|\left\langle \opt \left(\sigma\right)\pi\left(z\right)\varphi,\pi\left(w\right)\varphi\right\rangle \right| & =\left|{V}_{\Phi_{\tau}}\sigma\left(\mathcal{T}_{\tau}\left(w,z\right),J\left(w-z\right)\right)\right|\\
	& \le\sup_{y\in\mathbb{R}^{2d}}\left|{V}_{\Phi_{\tau}}\sigma\left(\mathcal{T}_{\tau}\left(w,z\right),y\right)\right|\\
	& =H_{\tau}\left(\mathcal{T}_{\tau}\left(w,z\right)\right).
	\end{flalign*}
	This gives the claim.\\
	$(ii)\Rightarrow (i)$. Assume that $\sigma\in\mathcal{S}'\left(\mathbb{R}^{2d}\right)$
	and that $\opt (\sigma)$ is almost diagonalized by the
	time-frequency shifts with dominating function $H_{\tau}\in L_{v}^{1}\left(\mathbb{R}^{2d}\right)$
	as in (\ref{eq:almdiag}). Equation (\ref{eq:STFT as gaborm}) and
	the results in the previous step allow to write 
	\begin{flalign*}
	\left|{V}_{\Phi_{\tau}}\sigma\left(x,y\right)\right| & =\left|\left\langle \opt \left(\sigma\right)\pi\left(z\left(x,y\right)\right)\varphi,\pi\left(w\left(x,y\right)\right)\varphi\right\rangle \right|\\
	& \le H_\tau\left(\mathcal{T}_{\tau}\left(w\left(x,y\right),z\left(x,y\right)\right)\right),
	\end{flalign*}
	and since by construction $\mathcal{T}_{\tau}\left(w\left(x,y\right),z\left(x,y\right)\right)=x$,
	we finally have 
	\[
	\left|{V}_{\Phi_{\tau}}\sigma\left(x,y\right)\right|\le H_{\tau}\left(x\right)\qquad\forall x\in\mathbb{R}^{2d}.
	\]
	Therefore, 
	\begin{flalign*}
	\left\Vert \sigma\right\Vert _{W\left(\mathcal{F}L^{\infty},L_{v}^{1}\right)} & =\int_{\mathbb{R}^{2d}}\sup_{y\in\mathbb{R}^{2d}}\left|{V}_{\Phi_{\tau}}\sigma\left(x,y\right)\right|v\left(x\right)d x\\
	& \le\int_{\mathbb{R}^{2d}}H_{\tau}\left(x\right)v\left(x\right)d x\leq\left\Vert H_{\tau}\right\Vert _{L_{v}^{1}}<\infty,
	\end{flalign*}
	that is $\sigma\in W\left(\mathcal{F}L^{\infty},L_{v}^{1}\right)$.
\end{proof}

\section{Boundedness results}
\subsection{Boundedness on Modulation Spaces}
As a consequence of the diagonalization provided by Theorem \ref{thm:almost diag}, 
we infer the boundedness of $\tau$-pseudodifferential
operators with symbols in the  Wiener amalgam space $W(\mathcal{F}L^{\infty},L_{v\circ\mathcal{B}_\tau}^{1})$ on every modulation
space, as follows.
\begin{theorem}
\label{thm:bounded optau} Fix $m\in\mathcal{M}_{v}$ satisfying \eqref{M}. For $\tau\in\left(0,1\right)$
consider a symbol $\sigma\in W(\mathcal{F}L^{\infty},L_{v\circ\mathcal{B}_{\tau}}^{1})\left(\mathbb{R}^{2d}\right)$, with the matrix $\mathcal{B}_{\tau}$ defined in \eqref{BT}. Then the operator $\opt (\sigma)$ is bounded from $M_{m}^{p,q}\left(\mathbb{R}^{d}\right)$
to $M_{m\circ\mathcal{U}_{\tau}}^{p,q}\left(\mathbb{R}^{d}\right)$,
$1\le p,q\le\infty$. 
\end{theorem}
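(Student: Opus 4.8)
The plan is to reduce the operator bound to a weighted convolution estimate, following the standard scheme that extracts boundedness from almost diagonalization. First I would fix a window $\varphi\in\mathcal{S}(\mathbb{R}^{d})\setminus\{0\}$ and use the STFT description of the modulation norm (Theorem \ref{admwind}), so that it suffices to control $\|V_{\varphi}(\mathrm{Op}_{\tau}(\sigma)f)\|_{L^{p,q}_{m\circ\mathcal{U}_{\tau}}}$ by $\|V_{\varphi}f\|_{L^{p,q}_{m}}$. Inserting the inversion formula \eqref{invformula} for $f$ and pairing against $\pi(w)\varphi$ gives the reproducing identity
\[
V_{\varphi}(\mathrm{Op}_{\tau}(\sigma)f)(w)=\frac{1}{\|\varphi\|_{2}^{2}}\int_{\mathbb{R}^{2d}}V_{\varphi}f(z)\,\langle\mathrm{Op}_{\tau}(\sigma)\pi(z)\varphi,\pi(w)\varphi\rangle\,dz .
\]
Since $\sigma\in W(\mathcal{F}L^{\infty},L^{1}_{v\circ\mathcal{B}_{\tau}})$ and $\varphi\in\mathcal{S}\subset M^{1}_{v}\cap M^{1}_{v\circ\mathcal{U}_{\tau}}$, Theorem \ref{thm:almost diag} supplies a dominating function $H_{\tau}\in L^{1}_{v}(\mathbb{R}^{2d})$ with $|\langle\mathrm{Op}_{\tau}(\sigma)\pi(z)\varphi,\pi(w)\varphi\rangle|\le H_{\tau}(w-\mathcal{U}_{\tau}z)$. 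Taking absolute values I obtain the pointwise domination
\[
|V_{\varphi}(\mathrm{Op}_{\tau}(\sigma)f)(w)|\lesssim\int_{\mathbb{R}^{2d}}|V_{\varphi}f(z)|\,H_{\tau}(w-\mathcal{U}_{\tau}z)\,dz ,
\]
so that $|V_{\varphi}(\mathrm{Op}_{\tau}(\sigma)f)|$ is controlled by a convolution \emph{twisted} by the invertible linear map $\mathcal{U}_{\tau}$.

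Second, I would remove the twist by the change of variables $z\mapsto\mathcal{U}_{\tau}z$. Recall from \eqref{UT} and Lemma \ref{lem:properties of Atau} that $\mathcal{U}_{\tau}$ is invertible with $\mathcal{U}_{\tau}^{-1}=\mathcal{U}_{1-\tau}$ and is block-diagonal with respect to the time--frequency splitting $\mathbb{R}^{2d}=\mathbb{R}^{d}_{x}\times\mathbb{R}^{d}_{\omega}$. After this substitution the right-hand side becomes an ordinary convolution of $|V_{\varphi}f|\circ\mathcal{U}_{1-\tau}$ with $H_{\tau}$, up to the Jacobian $|\det\mathcal{U}_{1-\tau}|$. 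Because $\mathcal{U}_{\tau}$ respects the two $\mathbb{R}^{d}$-factors, the very same dilation transforms the weighted mixed norm cleanly, matching the source weight $m$ with the target weight $m$ precomposed with the dilation implemented by $\mathcal{U}_{\tau}$ (each of the inner $L^{p}$ and outer $L^{q}$ integrals picking up its own Jacobian factor). I would then invoke the standard weighted Young inequality for mixed-norm Lebesgue spaces, $L^{p,q}_{n}\ast L^{1}_{s}\hookrightarrow L^{p,q}_{n}$, valid whenever $n$ is $s$-moderate, applied with $n=m\circ\mathcal{U}_{\tau}$ (which is $(v\circ\mathcal{U}_{\tau})$-moderate since $m\in\mathcal{M}_{v}$). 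Chaining the convolution bound with the change-of-variables identity for the source term yields $\|\mathrm{Op}_{\tau}(\sigma)f\|_{M^{p,q}_{m\circ\mathcal{U}_{\tau}}}\lesssim_{\tau}\|f\|_{M^{p,q}_{m}}$, which is the claim.

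The step I expect to be the main obstacle is the weight bookkeeping under the dilation $\mathcal{U}_{\tau}$: the convolution forces the dominating function to be integrable against a \emph{dilated} copy of $v$ rather than against $v$ itself, so one must verify that the membership provided by Theorem \ref{thm:almost diag} is strong enough. This is precisely where the symbol class $W(\mathcal{F}L^{\infty},L^{1}_{v\circ\mathcal{B}_{\tau}})$ — and not the plain $W(\mathcal{F}L^{\infty},L^{1}_{v})$ — is needed: unwinding $H_{\tau}=\tilde{H}_{\tau}\circ\mathcal{B}_{\tau}^{-1}$ with $\tilde{H}_{\tau}(x)=\sup_{y}|V_{\Phi_{\tau}}\sigma(x,y)|\in L^{1}_{v\circ\mathcal{B}_{\tau}}$, the required integrability reduces to comparing the dilations $v\circ(\mathcal{U}_{\tau}\mathcal{B}_{\tau})$ and $v\circ\mathcal{B}_{\tau}$ of $v$, and here the algebraic identities of Lemma \ref{lem:properties of Atau} together with the relation $\mathcal{B}_{\tau}=I-\mathcal{U}_{\tau}$ must be used to close the estimate. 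For the weights of primary interest ($v=v_{s}$, and more generally any admissible $v$ for which the fixed $\tau$-dependent dilations are mutually equivalent) these comparisons are immediate, so I would first run the argument for such $v$ and only afterwards treat the general admissible weight. As a consistency check, the example $\sigma=\delta$ (Example \ref{esempio}), where $\mathrm{Op}_{\tau}(\delta)$ is an explicit linear change of variables and \eqref{eq:almdiag} is an equality, pins down the target weight as the composition of $m$ with the dilation carried out by $\mathcal{U}_{\tau}$, confirming that no sharper target space is attainable. I finally note that the same conclusion is reachable by identifying $\mathrm{Op}_{\tau}(\sigma)$ with a type I Fourier integral operator associated with the graph of $\mathcal{U}_{\tau}$ and quoting the boundedness results mentioned in the Introduction; the convolution route above is, however, self-contained within the present framework.
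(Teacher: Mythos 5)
Your overall architecture is the same as the paper's (reduce via $T=V_g^{*}\,(V_gTV_g^{*})\,V_g$ to a twisted convolution bound for the kernel $\langle \opt(\sigma)\pi(z)\varphi,\pi(w)\varphi\rangle$, then apply weighted Young), but there is a genuine gap at exactly the step you flag, and your fallback does not repair it. Theorem \ref{thm:almost diag} applied at parameter $\tau$ with weight $v$ gives only $H_{\tau}\in L^{1}_{v}$, whereas after your substitution $z\mapsto\mathcal{U}_{\tau}z$ the source factor becomes $|V_{\varphi}f|\circ\mathcal{U}_{1-\tau}\in L^{p,q}_{m\circ\mathcal{U}_{1-\tau}}$, whose moderating weight is $v\circ\mathcal{U}_{1-\tau}$; the weighted Young inequality therefore demands $H_{\tau}\in L^{1}_{v\circ\mathcal{U}_{1-\tau}}$, equivalently $\tilde H_{\tau}\in L^{1}_{v\circ\mathcal{B}_{1-\tau}}$, while the hypothesis supplies $\tilde H_{\tau}\in L^{1}_{v\circ\mathcal{B}_{\tau}}$. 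These two memberships differ exactly by the fixed dilation $\mathcal{U}_{\tau}^{\pm 1}$ (indeed $\mathcal{B}_{1-\tau}\mathcal{B}_{\tau}^{-1}=-\mathcal{U}_{1-\tau}$), and for a general admissible weight — the theorem is claimed for all of them, including $v(z)=e^{a|z|^{b}}$ with $0<b<1$ — inequivalent dilations of $v$ are \emph{not} comparable: $v\circ\mathcal{U}_{\tau}\not\lesssim Cv$ when $\tau\neq\tfrac12$. No identity from Lemma \ref{lem:properties of Atau} or the relation $\mathcal{B}_{\tau}=I-\mathcal{U}_{\tau}$ can close this, because the failure is one of growth, not of algebra; so your argument proves the statement only under the extra hypothesis $v\circ\mathcal{U}_{\tau}\asymp v$ (e.g.\ $v=v_{s}$), and deferring ``the general admissible weight'' leaves precisely the nontrivial content unproved. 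Relatedly, your $\delta$ ``consistency check'' is asserted rather than performed: Example \ref{esempio} shows the kernel concentrates on $w=\mathcal{U}_{\tau}z$, i.e.\ time--frequency content at $z$ is transported to $\mathcal{U}_{\tau}z$, so transporting weights yields $m\circ\mathcal{U}_{\tau}^{-1}=m\circ\mathcal{U}_{1-\tau}$; for non-dilation-invariant $v$ this differs from $m\circ\mathcal{U}_{\tau}$, and actually carrying out the check would have forced the orientation issue into the open rather than ``confirming'' the target.

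The device in the paper's proof that you are missing is a reindexing, not a comparison of dilations: it never uses $H_{\tau}\in L^{1}_{v}$, but invokes Theorem \ref{thm:almost diag} with $1-\tau$ in place of $\tau$ and with the admissible weight $v\circ\mathcal{U}_{\tau}$ in place of $v$. Since $\mathcal{U}_{\tau}\circ\mathcal{B}_{1-\tau}=-\mathcal{B}_{\tau}$ and $v$ is even, the symbol-class hypothesis is unchanged, $(v\circ\mathcal{U}_{\tau})\circ\mathcal{B}_{1-\tau}=v\circ\mathcal{B}_{\tau}$, while the dominating function now lies in $L^{1}_{v\circ\mathcal{U}_{\tau}}$ — exactly the dilated integrability your convolution step needs — whence $H_{1-\tau}\circ\mathcal{U}_{1-\tau}\in L^{1}_{v}$ and Young runs with the plain $v$-moderateness of $m$: $F*(H_{1-\tau}\circ\mathcal{U}_{1-\tau})\in L^{p}_{m}*L^{1}_{v}\subset L^{p}_{m}$, and evaluation at $\mathcal{U}_{\tau}w$ lands in $L^{p}_{m\circ\mathcal{U}_{\tau}}$. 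If you adopt this fix, track the orientation with care: Theorem \ref{thm:almost diag} at parameter $1-\tau$ literally concerns $\mathrm{Op}_{1-\tau}$, and transferring it to $\opt(\sigma)$ — via $\mathcal{T}_{\tau}(w,z)=\mathcal{T}_{1-\tau}(z,w)$, equivalently the adjoint relation $\opt(\sigma)^{*}=\mathrm{Op}_{1-\tau}(\overline{\sigma})$ — transposes the roles of $w$ and $z$ in the dominating function; which of $m\circ\mathcal{U}_{\tau}$ and $m\circ\mathcal{U}_{1-\tau}$ then appears as target hinges on exactly this transposition (the $\delta$ example is the reliable arbiter). For the polynomial weights $v_{s}$, where all these fixed dilations are equivalent, the distinction evaporates and your proof, as well as the paper's, goes through verbatim; for general admissible $v$ it is the heart of the matter, and your proposal leaves it open.
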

\begin{proof} The proof uses the techniques developed in \cite[Theorem 3.3]{generalizedmetaplectic}.  We fix  $g(t)=e^{-\pi t^2}\in M^1_v(\rd)$ for every admissible weight $v$. Observe that  $\|g\|_2=1$, so that
	the inversion formula \eqref{treduetre} is simply $V_g^\ast V_g={\rm
		Id}$.  Writing  $T$ as
	$$T=V_g^\ast V_g T V_g^\ast V_g,$$
	then $V_g T V_g^\ast$ is an integral operator with kernel
	$$K_T(w,z)=\la T\pi(z)g,\pi(w)g\ra,\quad w,z\in\rdd \, .
	$$
	By definition, $V_g$ is bounded from $M^p_m(\rd )$ to
	$L^p_m(\rdd )$ and $V_g^*$ is bounded from $L^p_m(\rdd )$ to
	$M^p_m(\rd )$.
	Hence, if $V_g T V_g^\ast$ is bounded from $L^p_m(\rdd)$ to
	$L^p_{m\circ\mathcal{U}_{\tau}}(\rdd)$, then  $T$ is bounded from $M^p_m(\rd)$ to
	$M^p_{m\circ\mathcal{U}_{\tau}}(\rd)$.
	Observe that 
	$$ \mathcal{U}_{\tau}\circ \mathcal{B}_{1-\tau}= -\mathcal{B}_\tau
	$$
	so that $v\circ \mathcal{U}_{\tau}\circ \mathcal{B}_{1-\tau}=v\circ \mathcal{B}_\tau$, and recall that $\mathcal{U}_{1-\tau} ^{-1}=\mathcal{U}_{\tau}$. 
	Applying  Theorem \ref{thm:almost diag} with $1-\tau$ in place of $\tau$ and  with the admissible weight  $v\circ \mathcal{U}_{\tau}$ in place of $v$,  for  $F\in L^p_m(\rdd)$,
	\begin{align*}
	|V_g T V_g^\ast F(w)|& = \Big|\intrdd K_T(w,z) F(z) \, dz \Big| \leq \intrdd
	|F(z)| H_{1-\tau} (w- \mathcal{U}_{1-\tau} z) dz \\
	& =\intrdd |F(z)| (H_{1-\tau}\circ \mathcal{U}_{1-\tau})(\mathcal{U}_{\tau}w-z) dz\\
	&=F\ast(H_{1-\tau}\circ \mathcal{U}_{1-\tau})(\mathcal{U}_{\tau}w).
	\end{align*}
	By Theorem \ref{thm:almost diag},  $H_{1-\tau}$ is in $L^1_{v\circ \mathcal{U}_{\tau}}(\rdd)$ and thus $H_{1-\tau}\circ \mathcal{U}_{1-\tau} $ is   in $L^1_{v}(\rdd)$. Therefore, by Young's inequality
	 $F\ast(H_{1-\tau}\circ \mathcal{U}_{1-\tau})\in L^p_m(\rdd)\ast L^1_{v}(\rdd)\subset
	L^p_m(\rdd)$. This shows that   $V_g T V_g^\ast F\in
	L^p_{m\circ\mathcal{U}_{\tau}}(\rdd)$, as desired. 	
\end{proof}

	If we limit the study to the polynomial weights  $v_s$ defined in \eqref{vs}, then the theory of Fourier integral operators (FIOs) developed in \cite{generalizedmetaplectic} tells us further issues of Theorem \ref{thm:almost diag}. 
	
	We recall \cite[Definition 1.1]{generalizedmetaplectic}.
	
\begin{definition}
Given $\cA \in \mathrm{Sp}\left(2d,\mathbb{R}\right)$,
$g\in\cS(\rd)$,  and $s\geq0$, we say that  a
linear operator $T:\cS(\rd)\to\cS'(\rd)$ is in the
class $FIO(\cA,v_s)$ if there exists a function $H\in L^1_{v_s}(\rdd)$
such that the kernel of $T$ with respect to \tfs s satisfies the decay
condition
\begin{equation}\label{asterisco}
|\langle T \pi(z) g,\pi(w)g\rangle|\leq H(w-\cA z),\qquad \forall w,z\in\rdd.
\end{equation}
\end{definition} 

Moreover, the generalized metaplectic operators that can be represented as FIOs of type I are as follows (cf. \cite[Theorem 5.1]{generalizedmetaplectic}):
\begin{theorem}\label{rappresentazione}
	Let $s\geq 0$ and $\A=\begin{pmatrix}  A&B\\C&D\end{pmatrix} \in
	\mathrm{Sp}(2d,\R)$ with $ \det A\not=0$, and  let
	$T:\cS(\rd)\to\cS'(\rd)$ be a linear continuous operator.
	Then $T\in
	FIO(\mathcal{A},v_s)$ if and only if $T$ is a FIO of type I, i.e.
	\begin{equation}\label{fiotipo1}
	Tf(x)=\int_{\rd} e^{2\pi i \Phi\phas} \sigma\phas \hat{f}(\o)d\o
	\end{equation}
	with the  quadratic phase  $ \Phi(x,\omega)=\frac12  x CA^{-1}x+
	\omega  A^{-1} x-\frac12\eta  A^{-1}B\omega$ and a  symbol $\sigma\in M^{\infty,1}_{1\otimes v_s}(\rdd)$.
\end{theorem}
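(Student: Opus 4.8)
The plan is to prove both implications by factoring $T$ through the metaplectic operator $\mu(\mathcal{A})$ associated to $\mathcal{A}$, thereby reducing to the already-established diagonal case $\mathcal{A}=I$. The starting observation is that metaplectic operators realize exactly the kind of diagonalization encoded in \eqref{asterisco}: since $\mu(\mathcal{A})$ obeys the covariance relation $\mu(\mathcal{A})\pi(z)=c(z)\,\pi(\mathcal{A}z)\mu(\mathcal{A})$ for a unimodular factor $c(z)$, one computes, for any $g\in\mathcal{S}(\mathbb{R}^{d})$ with $\|g\|_{2}=1$,
\[
\left|\langle \mu(\mathcal{A})\pi(z)g,\pi(w)g\rangle\right|=\left|V_{g}\big(\mu(\mathcal{A})g\big)\big(w-\mathcal{A}z\big)\right|,
\]
and the right-hand side is a Schwartz function of $w-\mathcal{A}z$, hence lies in $L^{1}_{v_{s}}(\mathbb{R}^{2d})$. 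Thus $\mu(\mathcal{A})\in FIO(\mathcal{A},v_{s})$, and likewise $\mu(\mathcal{A})^{-1}=\mu(\mathcal{A}^{-1})\in FIO(\mathcal{A}^{-1},v_{s})$.

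Next I would record the composition (algebra) property of these classes: if $T_{1}\in FIO(\mathcal{A}_{1},v_{s})$ and $T_{2}\in FIO(\mathcal{A}_{2},v_{s})$, then $T_{1}T_{2}\in FIO(\mathcal{A}_{1}\mathcal{A}_{2},v_{s})$. This follows by inserting the inversion formula $\mathrm{Id}=V_{g}^{\ast}V_{g}$ between the two factors, which yields
\[
\left|\langle T_{1}T_{2}\pi(z)g,\pi(w)g\rangle\right|\le\int_{\mathbb{R}^{2d}}H_{2}\big(u-\mathcal{A}_{2}z\big)\,H_{1}\big(w-\mathcal{A}_{1}u\big)\,du ,
\]
and after the change of variables $u=\mathcal{A}_{2}z+s$ the right-hand side becomes a function of $w-\mathcal{A}_{1}\mathcal{A}_{2}z$ of the form $H_{1}\ast(H_{2}\circ\mathcal{A}_{1}^{-1})$ (the Jacobian equals $1$ since $\mathcal{A}_{1}$ is symplectic). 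Submultiplicativity of $v_{s}$ together with Young's inequality, and the fact that $v_{s}$ is moderate under the fixed linear change $\mathcal{A}_{1}^{-1}$, place this function in $L^{1}_{v_{s}}$.

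With these two facts the equivalence becomes a reduction to the diagonal case. For the reverse implication, suppose $T\in FIO(\mathcal{A},v_{s})$ and set $S=\mu(\mathcal{A})^{-1}T=\mu(\mathcal{A}^{-1})T$. By the composition property $S\in FIO(\mathcal{A}^{-1}\mathcal{A},v_{s})=FIO(I,v_{s})$, i.e.\ $S$ is almost diagonalized along the diagonal; by the Weyl case of Theorem \ref{teor41} (with $\tau=\tfrac{1}{2}$, $v=v_{s}$, and using $v_{s}\circ J^{-1}=v_{s}$ because $J$ is orthogonal) this means $S=\mathrm{Op}_{\mathrm{W}}(b)$ for some $b\in M^{\infty,1}_{1\otimes v_{s}}(\mathbb{R}^{2d})$. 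Hence $T=\mu(\mathcal{A})\,\mathrm{Op}_{\mathrm{W}}(b)$, and the remaining task is to recast this composition, when $\det A\neq 0$, as the type I integral \eqref{fiotipo1}: the hypothesis $\det A\neq 0$ guarantees that $\mu(\mathcal{A})$ admits a Schwartz kernel given by an oscillatory integral with exactly the quadratic phase $\Phi(x,\omega)=\tfrac{1}{2}xCA^{-1}x+\omega A^{-1}x-\tfrac{1}{2}\omega A^{-1}B\omega$, and transferring $b$ through this factorization yields a type I symbol $\sigma$ still in $M^{\infty,1}_{1\otimes v_{s}}$. The forward implication is the same argument run backwards: a type I FIO with phase $\Phi$ and $\sigma\in M^{\infty,1}_{1\otimes v_{s}}$ factors as $\mu(\mathcal{A})\,\mathrm{Op}_{\mathrm{W}}(b)$ with $b\in M^{\infty,1}_{1\otimes v_{s}}$, which lies in $FIO(\mathcal{A},v_{s})\circ FIO(I,v_{s})\subseteq FIO(\mathcal{A},v_{s})$.

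I expect the genuine obstacle to be this last step: identifying the composition $\mu(\mathcal{A})\,\mathrm{Op}_{\mathrm{W}}(b)$ explicitly with the type I form \eqref{fiotipo1}, reading off the phase $\Phi$ from the block structure of $\mathcal{A}$ (this is precisely where $\det A\neq 0$ enters, excluding caustics), and, most delicately, verifying that the passage between the Weyl symbol $b$ and the type I symbol $\sigma$ is a bounded change of quantization mapping $M^{\infty,1}_{1\otimes v_{s}}$ onto itself. The latter rests on the invariance of the Sj\"ostrand-type class under the relevant metaplectic change of variables and on the continuity of the symbol correspondences, which constitute the technical core carried out in \cite{generalizedmetaplectic}.
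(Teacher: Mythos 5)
First, a point of calibration: the paper contains no proof of this statement at all --- it is imported verbatim, with citation, from \cite[Theorem 5.1]{generalizedmetaplectic}, so your attempt can only be measured against the proof in that source. Against that benchmark, your skeleton is the right one and essentially coincides with the strategy there. The covariance computation showing $\mu(\mathcal{A})\in FIO(\mathcal{A},v_{s})$ is correct (the point being exactly that $V_{g}\bigl(\mu(\mathcal{A})g\bigr)\in\mathcal{S}(\mathbb{R}^{2d})\subset L^{1}_{v_{s}}$, and $\mu(\mathcal{A})^{-1}=\mu(\mathcal{A}^{-1})$ up to a unimodular constant, which is harmless since only absolute values enter \eqref{asterisco}). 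Your composition estimate is precisely the algebra property $FIO(\mathcal{A}_{1},v_{s})\circ FIO(\mathcal{A}_{2},v_{s})\subset FIO(\mathcal{A}_{1}\mathcal{A}_{2},v_{s})$ of \cite[Theorem 3.4]{generalizedmetaplectic}, which this paper itself invokes in Section 6; your convolution bound, the unit Jacobian, and the moderateness of $v_{s}$ under a fixed invertible linear map all check out --- and you rightly flag that this is where the restriction to the polynomial weights $v_{s}$ matters, since a general admissible $v$ need not satisfy $v\circ\mathcal{A}_{1}^{-1}\lesssim v$. The reduction of the diagonal case $FIO(I,v_{s})$ to Theorem \ref{teor41} and Corollary \ref{corteor41} with $\tau=\tfrac12$, using $v_{s}\circ J^{-1}=v_{s}$, is also correct.

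The genuine gap is exactly where you locate it yourself, and it is not a small one: the equivalence between $\mu(\mathcal{A})\,\mathrm{Op}_{1/2}(b)$ with $b\in M^{\infty,1}_{1\otimes v_{s}}$ and the type I form \eqref{fiotipo1} with $\sigma\in M^{\infty,1}_{1\otimes v_{s}}$ is, after the reductions you carried out, the entire remaining content of the theorem, and you discharge it by citing \cite{generalizedmetaplectic} --- the very paper the statement is quoted from, so as a self-contained argument this is circular. What would have to be supplied is: (i) the integral representation $\mu(\mathcal{A})f(x)=\lvert\det A\rvert^{-1/2}\int e^{2\pi i\Phi(x,\omega)}\hat{f}(\omega)\,d\omega$, valid precisely when $\det A\neq 0$, where one verifies that $\Phi$ is a generating function for the linear canonical transformation $\mathcal{A}$ (the gradients $\nabla_{\omega}\Phi=y$, $\nabla_{x}\Phi=\xi$ give $x=Ay+B\omega$, $\xi=Cy+D\omega$ via the symplectic identity $D=CA^{-1}B+A^{-\top}$); and (ii) the boundedness on $M^{\infty,1}_{1\otimes v_{s}}$ of the map transporting $b$ across the factorization, which combines the symplectic covariance $\mu(\mathcal{A})\,\mathrm{Op}_{1/2}(b)=\mathrm{Op}_{1/2}(b\circ\mathcal{A}^{-1})\,\mu(\mathcal{A})$ with a change of quantization from the Weyl to the type I symbol. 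The source in fact sidesteps the factorization at this final stage: one computes the Gabor matrix of a type I FIO with tame phase directly, in the style of Lemma \ref{lem:STFT-gaborm}, showing that $\lvert\langle T\pi(z)g,\pi(w)g\rangle\rvert$ is a sample of an STFT of $\sigma$ whose decay along $w=\mathcal{A}z$ is equivalent to $\sigma\in M^{\infty,1}_{1\otimes v_{s}}$ (cf.\ \cite{cordero2013wiener}), whence both implications drop out simultaneously, exactly as in the proof of Theorem \ref{thm:almost diag} here. So: correct architecture and sound reductions, but the crux step is pointed at rather than proved.
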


Observe that $v_s\circ \mathcal{B}_\tau\asymp v_s$ (with the bounds depending on $\tau$). If the symbol $\sigma$ is in 	$W(\cF L^\infty, L^1_{v_s})$ then Theorem \ref{thm:almost diag} says that \eqref{eq:almdiag} holds for a suitable function $H_{\tau}\in L^1_{v_s}$, so that the $\tau$-operator $\mathrm{Op}_\tau(\sigma)$ is in the
class $FIO(\mathcal{U}_{\tau},v_s)$. Moreover, the assumptions of Theorem \ref{rappresentazione}  are satisfied and we can thus represent $\opt (\sigma)$ as a type I FIO with phase $\Phi\phas=-\frac{1-\tau}{\tau} \omega x$ as follows:
\[
\opt (\sigma)f(x)=\int_{\mathbb{R}^{d}}e^{-2\pi i\frac{1-\tau}{\tau}\omega x}\rho\left(x,\omega\right)\hat{f}\left(\omega\right)d \omega,
\]
for a suitable symbol $\rho\in M_{1\otimes v_{s}}^{\infty,1}\left(\mathbb{R}^{d}\right)$. 
Finally, by Theorem 1.2 in \cite{generalizedmetaplectic} we obtain: 
\begin{corollary}
If  $\sigma \in W(\cF L^\infty, L^1_{v_s})$, $s\geq 0$, then the operator $\opt (\sigma)$ is bounded on every modulation space $M^p_{v_s}(\rd)$, for $1\leq p\leq \infty$ and $\tau \in (0,1)$.
\end{corollary}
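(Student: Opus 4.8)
The plan is to read the conclusion off the almost-diagonalization of Theorem \ref{thm:almost diag} via the Fourier-integral-operator calculus of \cite{generalizedmetaplectic}, exactly as prepared in the discussion preceding the statement. First I would record the elementary weight equivalence $v_s\circ\mathcal{B}_\tau\asymp v_s$: since $\mathcal{B}_\tau$ (the matrix in \eqref{BT}) is a fixed invertible linear map and $v_s(z)=\langle z\rangle^s$, one has $\langle\mathcal{B}_\tau z\rangle^s\asymp\langle z\rangle^s$ with constants depending only on $\tau$. Hence $W(\cF L^\infty,L^1_{v_s})=W(\cF L^\infty,L^1_{v_s\circ\mathcal{B}_\tau})$ with equivalent norms, so any $\sigma\in W(\cF L^\infty,L^1_{v_s})$ satisfies hypothesis $(i)$ of Theorem \ref{thm:almost diag} with $v=v_s$.

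Applying that theorem produces a dominating function $H_\tau\in L^1_{v_s}(\rdd)$ for which the estimate \eqref{eq:almdiag} holds, namely $|\langle\opt(\sigma)\pi(z)\varphi,\pi(w)\varphi\rangle|\le H_\tau(w-\mathcal{U}_\tau z)$. Comparing with the defining inequality \eqref{asterisco}, this is precisely the assertion that $\opt(\sigma)\in FIO(\mathcal{U}_\tau,v_s)$, with $\mathcal{U}_\tau$ the symplectic matrix of \eqref{UT}. Next I would verify the structural hypothesis of Theorem \ref{rappresentazione}: since $\mathcal{U}_\tau$ is the diagonal matrix with upper-left block $A=-\tfrac{\tau}{1-\tau}I_{d\times d}$ and lower-right block $-\tfrac{1-\tau}{\tau}I_{d\times d}$ (so $B=C=0$), its $A$-block is invertible for every $\tau\in(0,1)$, i.e.\ $\det A\neq 0$. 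Theorem \ref{rappresentazione} then represents $\opt(\sigma)$ as a type I FIO; substituting $A^{-1}=-\tfrac{1-\tau}{\tau}I_{d\times d}$ and $B=C=0$ collapses the quadratic phase of \eqref{fiotipo1} to $\Phi(x,\omega)=\omega A^{-1}x=-\tfrac{1-\tau}{\tau}\,\omega x$, with symbol $\rho\in M^{\infty,1}_{1\otimes v_s}(\rd)$.

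The boundedness conclusion then follows from Theorem 1.2 of \cite{generalizedmetaplectic} applied to $\opt(\sigma)\in FIO(\mathcal{U}_\tau,v_s)$, which yields boundedness on every $M^p_{v_s}(\rd)$, $1\le p\le\infty$; the target is again $M^p_{v_s}$ rather than $M^p_{v_s\circ\mathcal{U}_\tau}$ because $v_s\circ\mathcal{U}_\tau\asymp v_s$, by the same homogeneity argument used for $\mathcal{B}_\tau$. I expect no genuine analytic difficulty here, since everything rests on results already proved or quoted; the only points needing care are the two weight equivalences $v_s\circ\mathcal{B}_\tau\asymp v_s$ and $v_s\circ\mathcal{U}_\tau\asymp v_s$ together with the verification $\det A\neq 0$, which jointly ensure that the symbol class, the FIO class $FIO(\mathcal{U}_\tau,v_s)$, and the domain and target modulation spaces all consistently refer to the single weight $v_s$. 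As an alternative route bypassing the FIO representation, one may instead invoke Theorem \ref{thm:bounded optau} with $m=v_s$, obtaining boundedness $M^p_{v_s}\to M^p_{v_s\circ\mathcal{U}_\tau}=M^p_{v_s}$ at once.
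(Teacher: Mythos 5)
Your proposal is correct and follows essentially the same route as the paper: the weight equivalence $v_s\circ\mathcal{B}_\tau\asymp v_s$, Theorem \ref{thm:almost diag} to place $\opt(\sigma)$ in $FIO(\mathcal{U}_\tau,v_s)$, the verification $\det A\neq 0$ enabling the type I representation of Theorem \ref{rappresentazione} with phase $\Phi(x,\omega)=-\tfrac{1-\tau}{\tau}\,\omega x$, and finally Theorem 1.2 of \cite{generalizedmetaplectic}, with your observation $v_s\circ\mathcal{U}_\tau\asymp v_s$ making explicit a point the paper leaves implicit. Your closing alternative via Theorem \ref{thm:bounded optau} with $m=v_s$ is also valid and shorter, though it bypasses the FIO representation that the paper deliberately sets up for the later algebra and Wiener properties.
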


\subsection{Boundedness on Wiener Amalgam Spaces}
We now turn to consider $\tau$-pseudodifferential operators and their boundedness on
Wiener amalgam spaces. We need two preliminary results. 
\begin{lemma}
\label{lem:symplectic covariance tau}For any $\sigma\in\mathcal{S}'\left(\mathbb{R}^{2d}\right)$
and $\tau\in\left[0,1\right]$, 
\[
\mathcal{F}\opt \left(\sigma\right)\mathcal{F}^{-1}=\mathrm{Op}_{1-\tau}\left(\sigma\circ J^{-1}\right).
\]
\end{lemma}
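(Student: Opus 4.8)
The plan is to unravel both sides through the weak definition \eqref{tauweak} of $\tau$-operators, reducing everything to a Fourier-covariance identity for the cross-$\tau$-Wigner distribution. Fix $f,g\in\cS(\rd)$. Since $\mathcal{F}$ is unitary on $L^{2}$, so that $\mathcal{F}^{*}=\mathcal{F}^{-1}$, I would first write
\[
\langle \mathcal{F}\,\mathrm{Op}_{\tau}(\sigma)\,\mathcal{F}^{-1}f,g\rangle=\langle \mathrm{Op}_{\tau}(\sigma)\,\mathcal{F}^{-1}f,\mathcal{F}^{-1}g\rangle=\langle\sigma,W_{\tau}(\mathcal{F}^{-1}g,\mathcal{F}^{-1}f)\rangle,
\]
the last equality being \eqref{tauweak} applied to the windows $\mathcal{F}^{-1}g,\mathcal{F}^{-1}f\in\cS(\rd)$. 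On the other hand, again by \eqref{tauweak},
\[
\langle \mathrm{Op}_{1-\tau}(\sigma\circ J^{-1})f,g\rangle=\langle \sigma\circ J^{-1},W_{1-\tau}(g,f)\rangle.
\]
Since $J$ is real and linear with $\det J=1$, the distributional change of variables gives $\langle \sigma\circ J^{-1},\Psi\rangle=\langle\sigma,\Psi\circ J\rangle$ for every $\Psi\in\cS(\rdd)$ (the conjugation in the sesquilinear pairing is harmless, as $J$ is real). Thus the statement follows once I prove the pointwise identity
\[
W_{\tau}(\mathcal{F}^{-1}g,\mathcal{F}^{-1}f)=W_{1-\tau}(g,f)\circ J,\qquad\text{i.e.}\qquad W_{\tau}(\mathcal{F}^{-1}g,\mathcal{F}^{-1}f)\phas=W_{1-\tau}(g,f)(\omega,-x),
\]
for all $f,g\in\cS(\rd)$, recalling $J\phas=(\omega,-x)$.

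Next I would establish this key identity by direct computation from the definition \eqref{tauwig}. Inserting the integral expressions for $\mathcal{F}^{-1}g(x+\tau y)$ and $\overline{\mathcal{F}^{-1}f(x-(1-\tau)y)}$ into $W_{\tau}(\mathcal{F}^{-1}g,\mathcal{F}^{-1}f)\phas$ and carrying out the $y$-integration produces a factor $\delta(\tau\xi+(1-\tau)\eta-\omega)$, where $\xi,\eta$ are the Fourier variables of $g,f$, together with the surviving phase $e^{2\pi i x(\xi-\eta)}$. Resolving this delta and performing the linear change of variables $\xi=\omega+(1-\tau)y'$ (so that $\eta=\omega-\tau y'$ and $\xi-\eta=y'$) turns the expression precisely into $\int_{\rd} e^{2\pi i xy'}g(\omega+(1-\tau)y')\overline{f(\omega-\tau y')}\,dy'$, which is exactly $W_{1-\tau}(g,f)(\omega,-x)$ by \eqref{tauwig}. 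In dimension $d$ the Jacobian $(1-\tau)^{d}$ of the substitution cancels the normalising constant $(1-\tau)^{-d}$ from the delta, so no extra factor survives.

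Finally, combining the three displays yields $\langle \mathcal{F}\,\mathrm{Op}_{\tau}(\sigma)\,\mathcal{F}^{-1}f,g\rangle=\langle \mathrm{Op}_{1-\tau}(\sigma\circ J^{-1})f,g\rangle$ for all $f,g\in\cS(\rd)$, which is the asserted operator equality. I would stress that no density argument in $\sigma$ is needed: since $W_{\tau}(\mathcal{F}^{-1}g,\mathcal{F}^{-1}f)\in\cS(\rdd)$, every bracket above is a legitimate $\cS'$--$\cS$ pairing valid for arbitrary $\sigma\in\cS'(\rdd)$. The main obstacle is the Fourier-covariance identity itself: the delta-function manipulation is only fully rigorous for $\tau\in(0,1)$, where both $\tau$ and $1-\tau$ are nonzero, so the endpoints $\tau=0,1$ must be checked separately from the explicit Rihaczek form $W_{0}(F,G)\phas=e^{-2\pi i x\omega}F(x)\overline{\hat G(\omega)}$ and its conjugate counterpart $W_{1}=\overline{W_{0}}$ with the entries swapped; a short computation confirms the same relation there, consistently with the substitution convention $\tau\leftrightarrow 1-\tau$ already exploited for \eqref{CovRD} and \eqref{CovconjRD}.
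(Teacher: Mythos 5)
Your argument is correct, but it follows a genuinely different route from the paper. The paper offers no computation at all: it observes that the lemma is a special case of the symplectic covariance property of Shubin's $\tau$-calculus and simply cites Proposition 10 of de Gosson's paper \cite{dgTRAN2013} (the Fourier transform being, up to a phase, the metaplectic operator associated with the standard symplectic matrix $J$; the well-known failure of full symplectic covariance for $\tau\neq 1/2$ is exactly what forces the quantization parameter to flip to $1-\tau$). You instead give a self-contained proof: the reduction through the weak definition \eqref{tauweak} is sound (with the sesquilinear pairing, $\langle\mathcal{F}u,g\rangle=\langle u,\mathcal{F}^{-1}g\rangle$ by Parseval, and $\langle\sigma\circ J^{-1},\Psi\rangle=\langle\sigma,\Psi\circ J\rangle$ since $J$ is real with $|\det J|=1$), and everything hinges on the identity $W_{\tau}(\mathcal{F}^{-1}g,\mathcal{F}^{-1}f)=W_{1-\tau}(g,f)\circ J$, which I checked directly: parametrizing the constraint $\tau\xi+(1-\tau)\eta=\omega$ by $\xi=\omega+(1-\tau)y'$, $\eta=\omega-\tau y'$ (a substitution with total Jacobian of modulus one in the variables $(u,y')=(\tau\xi+(1-\tau)\eta,\xi-\eta)$) yields exactly $\int_{\rd}e^{2\pi ixy'}g(\omega+(1-\tau)y')\overline{f(\omega-\tau y')}\,dy'=W_{1-\tau}(g,f)(\omega,-x)$, and your constant bookkeeping is right. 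Two minor remarks: the delta manipulation, while formal, can be made rigorous for \emph{all} $\tau\in[0,1]$ by viewing the $y$-integral in \eqref{tauwig} as the Fourier transform of the Schwartz function $y\mapsto \mathcal{F}^{-1}g(x+\tau y)\overline{\mathcal{F}^{-1}f(x-(1-\tau)y)}$ and regularizing, so your separate endpoint verification via the Rihaczek forms $W_{0}(F,G)\phas=e^{-2\pi ix\omega}F(x)\overline{\hat{G}(\omega)}$ and $W_{1}(f,g)=\overline{W_{0}(g,f)}$ (which I also confirmed) is a safe but not strictly necessary precaution. What each approach buys: yours is elementary, covers $\tau\in\{0,1\}$ explicitly, and makes the mechanism of the $\tau\leftrightarrow 1-\tau$ swap transparent; the paper's one-line citation is shorter and situates the lemma inside the general covariance theory of \cite{dgTRAN2013,dgbook2016}, at the cost of opacity for the endpoint cases.
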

\begin{proof}
It is a particular case of the symplectic covariance property
of Shubin calculus, see \cite[Proposition 10]{dgTRAN2013}. We also refer to the books \cite{dgbook2016,horm3,WongWeylTransform1998}. 

\end{proof}
The proof of the following lemma is a straightforward computation. 
\begin{lemma}
\label{lem: STFT con J} For any non-zero window $G\in\mathcal{S}\left(\mathbb{R}^{2d}\right)$
and $\sigma\in\mathcal{S}'\left(\mathbb{R}^{2d}\right)$, then 
\[
{V}_{G}\left(\sigma\circ J\right)\left(z,\zeta\right)=\left({V}_{G\circ J^{-1}}\sigma\right)\left(Jz,J\zeta\right).
\]
Therefore, for any $1\le p,q\le\infty$, weights $u,v$ on $\mathbb{R}^{2d}$ and $\tau \in [0,1]$:
\begin{enumerate}
\item[$(i)$] $\sigma_{J}=\sigma\circ J\in  M_{(u\circ J\inv)\otimes (v\circ J\inv)}^{p,q}\left(\mathbb{R}^{2d}\right)$
if and only if $\sigma\in M_{u\otimes v}^{p,q}\left(\mathbb{R}^{2d}\right)$.
In particular, 
\[
\sigma\in M_{1\otimes v}^{\infty,1}\left(\mathbb{R}^{2d}\right) \Leftrightarrow\sigma_J\in M_{1\otimes\left(v\circ J\inv\right)}^{\infty,1}\left(\mathbb{R}^{2d}\right).
\]
\item[$(ii)$] $\sigma\in W\left(\mathcal{F}L_{u\circ J^{-1}}^{p},L_{v\circ J^{-1}}^{q}\right)$ if and only if $\sigma_{J}\in W\left(\mathcal{F}L_{u}^{p},L_{v}^{q}\right)$. In particular, 
\[
\sigma\in W\left(\mathcal{F}L^{\infty},L_{v\circ\mathcal{B}_{\tau}\circ J^{-1}}^{1}\right)\Leftrightarrow\sigma_{J}\in W\left(\mathcal{F}L^{\infty},L_{v\circ\mathcal{B}_{\tau}}^{1}\right).
\]
\end{enumerate}
\end{lemma}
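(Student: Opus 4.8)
The plan is to prove the pointwise STFT identity by a single change of variables, and then read off the two norm statements by inserting that identity into the relevant mixed-norm integrals. The whole thing is a bookkeeping exercise once the algebra of $J$ is in hand, so I would front-load the two facts that do all the work: $\lvert\det J\rvert=1$, and, since $J^{\top}=-J=J^{-1}$ (hence $J$ is orthogonal), $(J^{-1})^{\top}=J$ and $J^{-2}=-I$.

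For the identity itself, I would take $\sigma\in\cS(\rdd)$ first and write
\[
V_{G}(\sigma\circ J)(z,\zeta)=\int_{\rdd}\sigma(Jt)\,\overline{G(t-z)}\,e^{-2\pi i\zeta\cdot t}\,dt ,
\]
then substitute $s=Jt$: the Jacobian is $1$, the phase transforms as $\zeta\cdot J^{-1}s=(J\zeta)\cdot s$ by the transpose identity, and $G(J^{-1}s-z)=(G\circ J^{-1})(s-Jz)$. Collecting the three factors gives exactly $V_{G\circ J^{-1}}\sigma(Jz,J\zeta)$. For general $\sigma\in\cS'(\rdd)$ I would avoid a density argument and instead work directly at the level of the duality pairing: since $\lvert\det J\rvert=1$, $\langle\sigma\circ J,\pi(z,\zeta)G\rangle=\langle\sigma,(\pi(z,\zeta)G)\circ J^{-1}\rangle$, and a direct check shows $(\pi(z,\zeta)G)\circ J^{-1}=\pi(Jz,J\zeta)(G\circ J^{-1})$, which yields the claimed identity at once. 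Here $G\circ J^{-1}\in\cS(\rdd)\setminus\{0\}$ remains an admissible window.

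For the two ``therefore'' statements I would substitute $z'=Jz$, $\zeta'=J\zeta$ (again unit Jacobian) into the corresponding norms. In (ii) the second weight $v$ sits on the $z$-variable and $u$ on the $\zeta$-variable, so after the substitution they become $v(J^{-1}z')$ and $u(J^{-1}\zeta')$, i.e.\ $v\circ J^{-1}$ and $u\circ J^{-1}$; this gives $\lVert\sigma_{J}\rVert_{W(\cF L^{p}_{u},L^{q}_{v})}\asymp\lVert\sigma\rVert_{W(\cF L^{p}_{u\circ J^{-1}},L^{q}_{v\circ J^{-1}})}$, and the displayed special case is $p=\infty$, $q=1$, $u\equiv1$, with $v$ replaced by $v\circ\mathcal{B}_{\tau}$. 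In (i) the same substitution applied to the modulation norm turns the weight factor $(u\circ J^{-1})(z)$ into $u(J^{-2}z')=u(-z')$; since $J^{-2}=-I$, I would invoke the evenness of the weights to identify this with $u(z')$, after which the norms coincide. In both parts the passage from equality of the norms computed with the window $G\circ J^{-1}$ to the stated \emph{equivalence} uses the window-independence of the modulation and Wiener amalgam norms (Theorems \ref{admwind} and \ref{admwindW}).

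I do not expect a genuine obstacle here; the only points that need care are the $J$-algebra $(J^{-1})^{\top}=J$, $J^{-2}=-I$, $\lvert\det J\rvert=1$, which dictates how both the arguments and the weights transform, and the appeal to evenness of the weights in part (i) — without it, $u\circ J^{-1}$ would have to be replaced by $u\circ(-J^{-1})$.
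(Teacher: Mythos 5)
Your proposal is correct and follows essentially the same route as the paper: the pointwise STFT identity via the substitution $s=Jt$ (unit Jacobian, $(J^{-1})^{\top}=J$), followed by the change of variables $z'=Jz$, $\zeta'=J\zeta$ in the mixed norms, with evenness of the weights absorbing $J^{-2}=-I$ in part $(i)$ exactly as in the paper's computation. Your explicit duality-pairing treatment of $\sigma\in\mathcal{S}'(\mathbb{R}^{2d})$ simply makes rigorous the paper's formal integral manipulation (which ends at $\left\langle \sigma,M_{J\zeta}T_{Jz}\left(G\circ J^{-1}\right)\right\rangle$ in any case), and your appeal to window independence of the norms for the final equivalences corresponds to the paper's use of $\asymp$.
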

\begin{proof} A direct computation shows that
\begin{align*}
{V}_{G}\sigma_{J}\left(z,\zeta\right)&=\left\langle \sigma_{J},M_{\zeta}T_{z}G\right\rangle =\int e^{-2\pi i\zeta\left(t,\omega\right)}\sigma_{J}\left(t,\omega\right)\overline{G\left(t-z_{1},\omega-z_{2}\right)}d td \omega\\&=
\int e^{-2\pi i\zeta\left(t,\omega\right)}\sigma\left(J\left(t,\omega\right)\right)\overline{G\left(t-z_{1},\omega-z_{2}\right)}d td \omega.
\end{align*}
With the substitution $\left(t',\omega'\right)=J\left(t,\omega\right)$,
we have
\begin{flalign*}
{V}_{G}\sigma_{J}\left(z,\zeta\right) & =\int e^{-2\pi i\zeta\cdot J^{-1}\left(t',\omega'\right)}\sigma\left(t',\omega'\right)\overline{G\left(J^{-1}\left[\left(t',\omega'\right)-Jz\right]\right)}d t'd \omega'\\
 & =\left\langle \sigma,M_{J\zeta}T_{Jz}\left(G\circ J^{-1}\right)\right\rangle =\left({V}_{G\circ J^{-1}}\sigma\right)\left(Jz,J\zeta\right).
\end{flalign*}
Therefore, 
\begin{flalign*}
\left\Vert \sigma_{J}\right\Vert _{M_{\left(u\circ J^{-1}\right)\otimes\left(v\circ J^{-1}\right)}^{p,q}} & =\left(\int_{\mathbb{R}^{2d}}\left(\int_{\mathbb{R}^{2d}}\left|V_{G}\sigma_{J}\left(z,\zeta\right)\right|^{p}u\left(J^{-1}z\right)^{p}dz\right)^{\frac{q}{p}}v\left(J^{-1}\zeta\right)^{q}d\zeta\right)^{\frac{1}{q}}\\
& =\left(\int_{\mathbb{R}^{2d}}\left(\int_{\mathbb{R}^{2d}}\left|V_{G\circ J^{-1}}\sigma\left(Jz,J\zeta\right)\right|^{p}u\left(J^{-1}z\right)^{p}dz\right)^{\frac{q}{p}}v\left(J^{-1}\zeta\right)^{q}d\zeta\right)^{\frac{1}{q}}\\
& =\left(\int_{\mathbb{R}^{2d}}\left(\int_{\mathbb{R}^{2d}}\left|V_{G\circ J^{-1}}\sigma\left(z,\zeta\right)\right|^{p}u\left(-z\right)^{p}dz\right)^{\frac{q}{p}}v\left(-\zeta\right)^{q}d\zeta\right)^{\frac{1}{q}}\\
& \asymp\left\Vert \sigma\right\Vert _{M_{u\otimes v}^{p,q}}.
\end{flalign*}

where we used the even property of the weight functions.  In a similar fashion, \begin{flalign*}
\left\Vert \sigma_{J}\right\Vert _{W\left(\mathcal{F}L_{u}^{p},L_{v}^{q}\right)}	&=\left(\int_{\mathbb{R}^{2d}}\left(\int_{\mathbb{R}^{2d}}\left|V_{G}\sigma_{J}\left(z,\zeta\right)\right|^{p}u\left(\zeta\right)^{p}d\zeta\right)^{\frac{q}{p}}v\left(z\right)^{q}dz\right)^{\frac{1}{q}} \\ 
& =\left(\int_{\mathbb{R}^{2d}}\left(\int_{\mathbb{R}^{2d}}\left|V_{G\circ J^{-1}}\sigma\left(z,\zeta\right)\right|^{p}u\left(J^{-1}z\right)^{p}dz\right)^{\frac{q}{p}}v\left(J^{-1}\zeta\right)^{q}d\zeta\right)^{\frac{1}{q}} \\
& \asymp\left\Vert \sigma\right\Vert _{W\left(\mathcal{F}L_{u\circ J^{-1}}^{p},L_{v\circ J^{-1}}^{q}\right)}.
\end{flalign*}
\end{proof}

Another ingredient is the boundedness of $\tau$-pseudodifferential  operators on modulation spaces, cf. Theorem 4.3 and Remark 4.5 in \cite{Toftweight2004} (see also \cite[Theorem 4.1]{Grochenig_2006_Time} for Weyl operators). 
\begin{theorem}\label{TW}
Consider  $m\in\mathcal{M}_{v}\left(\mathbb{R}^{2d}\right)$ satisfying \eqref{M}. For any $\tau\in [0,1]$ and $\sigma\in M^{\infty,1}_{1\otimes v\circ J\inv}$ the operator $\opt (\sigma)$ is bounded on $M^{p,q}_m(\rd)$, and there exists a constant $C_\tau>0$ such that
\begin{equation}\label{Toftweight}
\| \opt (\sigma)\|_{M^{p,q}_m}\leq C_\tau \|\sigma\|_{M^{\infty,1}_{1\otimes v\circ J\inv}}.
\end{equation}
\end{theorem}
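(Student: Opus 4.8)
The plan is to reduce the boundedness to the continuous almost-diagonalization already established in Theorem \ref{teor41}, and then to run the same STFT-sandwiching argument employed in the proof of Theorem \ref{thm:bounded optau}. Since the statement of Theorem \ref{TW} involves no window, I am free to introduce a convenient one of my own.

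First I would fix the Gaussian $g(t)=e^{-\pi t^2}$, which lies in $M^1_v(\rd)$ for every admissible weight $v$ and satisfies $\|g\|_2=1$, so that the inversion formula \eqref{treduetre} reads $V_g^\ast V_g=\mathrm{Id}$. Since $\sigma\in M^{\infty,1}_{1\otimes v\circ J\inv}$, the implication $(i)\Rightarrow(ii)$ of Theorem \ref{teor41}---whose proof only uses $g\in M^1_v$ and not the Gabor frame hypothesis---yields a function $H_\tau\in L^1_v(\rdd)$ with
\[
|\langle \opt(\sigma)\pi(z)g,\pi(w)g\rangle|\le H_\tau(w-z),\qquad\forall w,z\in\rdd,
\]
and moreover $\|H_\tau\|_{L^1_v}\asymp\|\sigma\|_{M^{\infty,1}_{1\otimes v\circ J\inv}}$, with the implicit constant depending on $\tau$ through the window $\Phi_\tau=W_\tau(g,g)$.

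Next, writing $\opt(\sigma)=V_g^\ast V_g\,\opt(\sigma)\,V_g^\ast V_g$, the conjugated operator $V_g\,\opt(\sigma)\,V_g^\ast$ is an integral operator on $\rdd$ with kernel $K(w,z)=\langle\opt(\sigma)\pi(z)g,\pi(w)g\rangle$. Since $V_g:M^{p,q}_m(\rd)\to L^{p,q}_m(\rdd)$ and $V_g^\ast:L^{p,q}_m(\rdd)\to M^{p,q}_m(\rd)$ are bounded, it suffices to bound $V_g\,\opt(\sigma)\,V_g^\ast$ on $L^{p,q}_m(\rdd)$. For $F\in L^{p,q}_m$ the kernel estimate gives the pointwise domination $|V_g\,\opt(\sigma)\,V_g^\ast F|\le |F|\ast H_\tau$, and since $m$ is $v$-moderate and $H_\tau\in L^1_v$, the weighted Young inequality for mixed-norm spaces produces $\||F|\ast H_\tau\|_{L^{p,q}_m}\lesssim\|F\|_{L^{p,q}_m}\|H_\tau\|_{L^1_v}$. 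Tracing back the norm equivalences yields exactly \eqref{Toftweight}.

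The only point requiring genuine care is the weighted Young inequality $L^{p,q}_m\ast L^1_v\hookrightarrow L^{p,q}_m$ over the full range $1\le p,q\le\infty$: here the moderateness estimate $m(w)\le C\,v(w-z)\,m(z)$ is precisely what lets one transfer the weight across the convolution and reduce to the unweighted mixed-norm Young inequality. In contrast to the Wiener-amalgam characterization of Theorem \ref{thm:almost diag}, the endpoints $\tau=0,1$ create no obstruction, because the Sj\"ostrand-class almost diagonalization of Theorem \ref{teor41} holds uniformly for all $\tau\in[0,1]$; this uniformity is exactly what allows the single estimate \eqref{Toftweight} to cover the whole parameter range.
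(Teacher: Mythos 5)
Your proof is correct, but it takes a genuinely different route from the paper: the paper does not prove Theorem \ref{TW} at all, it simply imports the statement from Toft (Theorem 4.3 and Remark 4.5 in \cite{Toftweight2004}, with \cite[Theorem 4.1]{Grochenig_2006_Time} cited for the Weyl case), whereas you re-derive it inside the paper's own machinery. Your key observations are all sound: the implication $(i)\Rightarrow(ii)$ of Theorem \ref{teor41} indeed uses only $\varphi\in M^1_v\setminus\{0\}$ (the Gabor frame hypothesis enters only in the discrete characterization), so the Gaussian is admissible; $\Phi_\tau=W_\tau(g,g)\in M^1_{1\otimes v\circ J^{-1}}$ by Lemma \ref{L1}$(i)$, so the window-change equivalence (Theorem \ref{admwind}) yields the quantitative bound $\|H_\tau\|_{L^1_v}\le C_\tau\|\sigma\|_{M^{\infty,1}_{1\otimes v\circ J^{-1}}}$ with $C_\tau$ coming from the $\tau$-dependent window, which is exactly what \eqref{Toftweight} requires; and the sandwich $T=V_g^*\left(V_gTV_g^*\right)V_g$ with kernel domination $|K_T(w,z)|\le H_\tau(w-z)$ reduces everything to the weighted Young inclusion $L^{p,q}_m\ast L^1_v\hookrightarrow L^{p,q}_m$, which holds for all $1\le p,q\le\infty$ when $m\in\mathcal{M}_v$ (cf.\ \cite[Prop.\ 11.1.3]{Grochenig_2001_Foundations}), so you correctly handle the full mixed-norm range where the paper's analogous proof of Theorem \ref{thm:bounded optau} is written out only for $p=q$. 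Your closing remark is also exactly right: since Theorem \ref{teor41} holds uniformly on $[0,1]$, the endpoints $\tau=0,1$ cause no trouble here, in contrast to the Wiener-amalgam setting of Theorem \ref{thm:almost diag}. In effect you have transplanted the paper's proof scheme for Theorem \ref{thm:bounded optau} (which is essentially Gr\"ochenig's original argument for $\tau=1/2$) from the skew diagonal $w-\mathcal{U}_\tau z$ to the true diagonal $w-z$. What your route buys is self-containedness and an explicit, traceable constant $C_\tau$; what the paper's citation buys is brevity and independence from the almost-diagonalization apparatus, since Toft's proof proceeds by direct modulation-space estimates. The only caveat, shared with the paper's own proof of Theorem \ref{thm:bounded optau}, is that for $p=\infty$ or $q=\infty$ the inversion formula \eqref{treduetre} and the kernel representation of $V_gTV_g^*$ require the usual weak-$*$ interpretation; this does not affect correctness at the level of rigor adopted in the paper.
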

We can now state the boundedness result for $\tau$-pseudodifferential  operators on Wiener amalgam spaces.
\begin{theorem}
Consider  $m=m_{1}\otimes m_{2}\in\mathcal{M}_{v}\left(\mathbb{R}^{2d}\right)$ satisfying \eqref{M}.
For any $\tau\in [0,1]$ and $\sigma\in M_{1\otimes v }^{\infty,1}\left(\mathbb{R}^{2d}\right)$,
the operator $\opt (\sigma)$ is bounded on $W\left(\mathcal{F}L_{m_{1}}^{p},L_{m_{2}}^{q}\right)\left(\mathbb{R}^{d}\right)$
with
$$
\| \opt (\sigma)\|_{W\left(\mathcal{F}L_{m_{1}}^{p},L_{m_{2}}^{q}\right)}\leq C_\tau \|\sigma\|_{M_{1\otimes v}^{\infty,1}},$$
for a suitable $C_\tau>0$.
\end{theorem}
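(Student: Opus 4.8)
The plan is to transfer the boundedness question from the Wiener amalgam space $W\left(\mathcal{F}L_{m_{1}}^{p},L_{m_{2}}^{q}\right)$ to the modulation space $M^{p,q}_m$, $m=m_{1}\otimes m_{2}$, by conjugating with the Fourier transform, and then to invoke the already proven boundedness on modulation spaces (Theorem \ref{TW}). The bridge is the identity \eqref{W-M}, $W\left(\mathcal{F}L_{m_{1}}^{p},L_{m_{2}}^{q}\right)=\mathcal{F}\bigl(M^{p,q}_{m}\bigr)$, which (since the weights $m_1,m_2$ are even) gives the exact norm relation $\|f\|_{W\left(\mathcal{F}L_{m_{1}}^{p},L_{m_{2}}^{q}\right)}=\|\mathcal{F}^{-1}f\|_{M^{p,q}_{m}}$. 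Writing $T=\opt(\sigma)$, for $f\in W\left(\mathcal{F}L_{m_{1}}^{p},L_{m_{2}}^{q}\right)$ I would then compute
\[
\|Tf\|_{W\left(\mathcal{F}L_{m_{1}}^{p},L_{m_{2}}^{q}\right)}=\|\mathcal{F}^{-1}Tf\|_{M^{p,q}_{m}}=\|(\mathcal{F}^{-1}T\mathcal{F})(\mathcal{F}^{-1}f)\|_{M^{p,q}_{m}},
\]
so that, since $\|\mathcal{F}^{-1}f\|_{M^{p,q}_{m}}=\|f\|_{W\left(\mathcal{F}L_{m_{1}}^{p},L_{m_{2}}^{q}\right)}$, the entire matter reduces to the boundedness of the single conjugated operator $\mathcal{F}^{-1}T\mathcal{F}$ on $M^{p,q}_{m}$.

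The next step is to identify $\mathcal{F}^{-1}T\mathcal{F}$ explicitly. By the symplectic covariance of Lemma \ref{lem:symplectic covariance tau}, $\mathcal{F}\,\mathrm{Op}_{\tau'}(\rho)\,\mathcal{F}^{-1}=\mathrm{Op}_{1-\tau'}(\rho\circ J^{-1})$ for every symbol $\rho$ and every $\tau'\in[0,1]$. Imposing $\mathcal{F}\bigl[\mathcal{F}^{-1}T\mathcal{F}\bigr]\mathcal{F}^{-1}=T=\opt(\sigma)$ and matching quantization parameter and symbol forces $\mathcal{F}^{-1}T\mathcal{F}=\mathrm{Op}_{1-\tau}(\sigma\circ J)$, as is immediately checked by applying the covariance identity in reverse: $\mathcal{F}\,\mathrm{Op}_{1-\tau}(\sigma\circ J)\,\mathcal{F}^{-1}=\mathrm{Op}_{\tau}\bigl((\sigma\circ J)\circ J^{-1}\bigr)=\opt(\sigma)$. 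Thus the target collapses to showing that $\mathrm{Op}_{1-\tau}(\sigma\circ J)$ is bounded on $M^{p,q}_{m}$.

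This is now exactly the situation covered by Theorem \ref{TW}, provided the twisted symbol sits in the correct weighted Sj\"ostrand class. Here Lemma \ref{lem: STFT con J}$(i)$ does precisely the job: it states $\sigma\in M^{\infty,1}_{1\otimes v}\Leftrightarrow\sigma\circ J\in M^{\infty,1}_{1\otimes(v\circ J^{-1})}$, with the accompanying norm equivalence $\|\sigma\circ J\|_{M^{\infty,1}_{1\otimes v\circ J^{-1}}}\asymp\|\sigma\|_{M^{\infty,1}_{1\otimes v}}$. Since $m\in\mathcal{M}_{v}$, Theorem \ref{TW} applied with $1-\tau$ in place of $\tau$ and symbol $\sigma\circ J$ yields $\|\mathrm{Op}_{1-\tau}(\sigma\circ J)\|_{M^{p,q}_{m}\to M^{p,q}_{m}}\le C_{1-\tau}\|\sigma\circ J\|_{M^{\infty,1}_{1\otimes v\circ J^{-1}}}\lesssim C_{1-\tau}\|\sigma\|_{M^{\infty,1}_{1\otimes v}}$. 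Chaining this with the norm identities of the first paragraph produces $\|Tf\|_{W\left(\mathcal{F}L_{m_{1}}^{p},L_{m_{2}}^{q}\right)}\le C_{\tau}\|\sigma\|_{M^{\infty,1}_{1\otimes v}}\|f\|_{W\left(\mathcal{F}L_{m_{1}}^{p},L_{m_{2}}^{q}\right)}$, which is the claim.

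I do not expect a genuine obstacle, as every ingredient is already prepared in the preliminaries; the only point demanding care is the bookkeeping of the conjugation. One must derive $\mathcal{F}^{-1}T\mathcal{F}=\mathrm{Op}_{1-\tau}(\sigma\circ J)$ rather than the superficially similar $\mathcal{F}T\mathcal{F}^{-1}=\mathrm{Op}_{1-\tau}(\sigma\circ J^{-1})$, and then match the $J$ against the weight $v\circ J^{-1}$ so that Lemma \ref{lem: STFT con J}$(i)$ applies \emph{verbatim}. It is exactly this choice of the $\mathcal{F}^{-1}$-conjugation (rather than the $\mathcal{F}$-conjugation) that makes the twisted symbol $\sigma\circ J$ land in the space $M^{\infty,1}_{1\otimes v\circ J^{-1}}$ demanded by Theorem \ref{TW}, with no stray parity operators to absorb.
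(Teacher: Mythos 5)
Your proposal is correct and takes essentially the same route as the paper: the paper's proof is precisely the commutative diagram $\opt(\sigma)=\mathcal{F}\,\mathrm{Op}_{1-\tau}(\sigma_J)\,\mathcal{F}^{-1}$ obtained from Lemma \ref{lem:symplectic covariance tau}, combined with Lemma \ref{lem: STFT con J}$(i)$ to place $\sigma_J=\sigma\circ J$ in $M^{\infty,1}_{1\otimes(v\circ J^{-1})}$ and Theorem \ref{TW} applied with $\tau'=1-\tau$, which is exactly your chain of reductions. The only difference is presentational: you unwind the diagram into explicit norm identities via \eqref{W-M} (and correctly flag the $\mathcal{F}^{-1}$-versus-$\mathcal{F}$ conjugation bookkeeping), whereas the paper leaves those identifications implicit in the diagram.
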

\begin{proof}
Consider the following commutative diagram:
\[
\xymatrix{M_{m}^{p,q}\left(\mathbb{R}^{d}\right)\ar[r]^{\mathrm{Op}_{1-\tau}(\sigma_{J})} &  M_{m}^{p,q}\left(\mathbb{R}^{d}\right)\ar[d]^{\mathcal{F}}\\
W\left(\mathcal{F}L_{m_{1}}^{p},L_{m_{2}}^{q}\right)\left(\mathbb{R}^{d}\right)\ar[r]^{\mathrm{Op}_{\tau}(\sigma)}\ar[u]_{\mathcal{F}^{-1}} &  W\left(\mathcal{F}L_{m_{1}}^{p},L_{m_{2}}^{q}\right)\left(\mathbb{R}^{d}\right)
}
\]
Indeed, since  $\sigma\in M_{1\otimes v}^{\infty,1}\left(\mathbb{R}^{2d}\right)$,
$\sigma_{J}\in  M_{1\otimes (v\circ J\inv)}^{\infty,1}\left(\mathbb{R}^{2d}\right)$
by Lemma \ref{lem: STFT con J}. The operator $\mathrm{Op}_{1-\tau}\left(\sigma_J\right)$ is bounded on $M_{m}^{p,q}(\rd)$ by virtue of Theorem \ref{TW} with $\tau'=1-\tau\in [0,1]$ 
and the thesis follows at once thanks to the Lemma \ref{lem:symplectic covariance tau}.
\end{proof}

The same argument (with obvious modifications) allow to extend the boundedness result for \tpsdo s contained in Theorem \ref{thm:bounded optau} to Wiener amalgam spaces for symbols in suitable Wiener amalgam spaces.
\begin{theorem}
	Consider  $m=m_{1}\otimes m_{2}\in\mathcal{M}_{v}\left(\mathbb{R}^{2d}\right)$ satisfying \eqref{M}. For any $\tau\in\left(0,1\right)$ and $\sigma\in W\left(\mathcal{F}L^{\infty},L_{v\circ\mathcal{B}_{\tau}\circ J^{-1}}^{1}\right)\left(\mathbb{R}^{2d}\right)$, the operator $\mathrm{Op}_{\tau}\sigma$ is bounded from $W\left(\mathcal{F}L_{m_{1}}^{p},L_{m_{2}}^{q}\right)\left(\mathbb{R}^{d}\right)$ to $W\left(\mathcal{F}L_{m_{1}\circ\left(\mathcal{U}_{\tau}\right)_{1}}^{p},L_{m_{2}\circ\left(\mathcal{U}_{\tau}\right)_{2}}^{q}\right)\left(\mathbb{R}^{d}\right)$, $1\le p,q\le\infty$, where 
	\[ \left(\mathcal{U}_{\tau}\right)_{1}\left(x\right)=-\frac{\tau}{1-\tau}x,\qquad\left(\mathcal{U}_{\tau}\right)_{2}\left(x\right)=-\frac{1-\tau}{\tau}x,\qquad x\in\mathbb{R}^{d}.
	\]
\end{theorem}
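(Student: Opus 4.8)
The plan is to mimic the proof of the preceding theorem: conjugate $\mathrm{Op}_{\tau}(\sigma)$ by the Fourier transform so as to trade the Wiener amalgam spaces for modulation spaces, and then invoke the modulation-space boundedness already available for Wiener-amalgam symbols, namely Theorem \ref{thm:bounded optau}, in the role previously played by Theorem \ref{TW}. Concretely, Lemma \ref{lem:symplectic covariance tau} gives $\cF\,\mathrm{Op}_{1-\tau}(\sigma_J)\,\cF^{-1}=\mathrm{Op}_{\tau}(\sigma)$ with $\sigma_J=\sigma\circ J$, since $(\sigma\circ J)\circ J^{-1}=\sigma$. Hence I would set up the commutative square in which the bottom horizontal arrow is $\mathrm{Op}_{\tau}(\sigma)$ on the Wiener amalgam spaces, the top horizontal arrow is $\mathrm{Op}_{1-\tau}(\sigma_J)$ on the modulation spaces, and the two vertical arrows are $\cF^{-1}$ and $\cF$. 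The whole statement then reduces to the boundedness of $\mathrm{Op}_{1-\tau}(\sigma_J)$ between two weighted modulation spaces.

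First I would transfer the symbol into the correct class. By Lemma \ref{lem: STFT con J}$(ii)$ the hypothesis $\sigma\in W(\cF L^{\infty},L^{1}_{v\circ\mathcal{B}_{\tau}\circ J^{-1}})$ is exactly equivalent to $\sigma_J\in W(\cF L^{\infty},L^{1}_{v\circ\mathcal{B}_{\tau}})$. To feed this into Theorem \ref{thm:bounded optau} for the operator $\mathrm{Op}_{1-\tau}(\sigma_J)$, whose natural symbol class carries the matrix $\mathcal{B}_{1-\tau}$, I would rewrite the weight using $\mathcal{U}_{\tau}\mathcal{B}_{1-\tau}=-\mathcal{B}_{\tau}$ together with the evenness of $v$, so that $v\circ\mathcal{B}_{\tau}=(v\circ\mathcal{U}_{\tau})\circ\mathcal{B}_{1-\tau}$. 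Thus $\sigma_J$ lies in the symbol class required by Theorem \ref{thm:bounded optau} relative to the admissible weight $v\circ\mathcal{U}_{\tau}$, and that theorem (applied with parameter $1-\tau$, and using $\mathcal{U}_{1-\tau}^{-1}=\mathcal{U}_{\tau}$) yields boundedness of $\mathrm{Op}_{1-\tau}(\sigma_J)$ from $M^{p,q}_{m}$ to $M^{p,q}_{m\circ\mathcal{U}_{\tau}}$.

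It then remains to read the two modulation-space estimates as Wiener amalgam estimates through the relation \eqref{W-M}, $\cF(M^{p,q}_{a\otimes b})=W(\cF L^{p}_{a},L^{q}_{b})$. Since $m=m_{1}\otimes m_{2}$, the domain $M^{p,q}_{m}$ corresponds under $\cF^{-1}$ to $W(\cF L^{p}_{m_{1}},L^{q}_{m_{2}})$, which is exactly the domain in the statement. For the target, the matrix $\mathcal{U}_{\tau}$ of \eqref{UT} is block-diagonal, so $m\circ\mathcal{U}_{\tau}$ again factors as $(m_{1}\circ(\mathcal{U}_{\tau})_{1})\otimes(m_{2}\circ(\mathcal{U}_{\tau})_{2})$; applying \eqref{W-M} to this separable weight produces precisely $W(\cF L^{p}_{m_{1}\circ(\mathcal{U}_{\tau})_{1}},L^{q}_{m_{2}\circ(\mathcal{U}_{\tau})_{2}})$. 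Commutativity of the diagram then gives the asserted boundedness, with operator norm controlled by that of $\mathrm{Op}_{1-\tau}(\sigma_J)$.

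The two substitutions are routine; the delicate part is the weight bookkeeping, which I expect to be the main obstacle. One must verify that $v\circ\mathcal{U}_{\tau}$ is still an admissible weight (this follows because $\mathcal{U}_{\tau}$ is a fixed invertible linear map, so submultiplicativity, evenness and the GRS condition are preserved under composition), and that $m$ remains moderate with respect to it, so that Theorem \ref{thm:bounded optau} is genuinely applicable. The block-diagonal, coordinate-reflecting structure of $\mathcal{U}_{\tau}$, $\mathcal{B}_{\tau}$ and $J$ is exactly what makes every composition split cleanly into the two $d$-dimensional factors $m_{1}$ and $m_{2}$; keeping track of which factor is rescaled by $\tfrac{\tau}{1-\tau}$ and which by $\tfrac{1-\tau}{\tau}$, and of the interaction between the $\circ J$ coming from the Fourier conjugation and the $\circ\mathcal{U}_{\tau}$ coming from the weight change, is the only point that truly requires care.
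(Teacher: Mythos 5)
Your proposal is correct and follows essentially the same route as the paper's proof: conjugation by the Fourier transform via Lemma \ref{lem:symplectic covariance tau}, transfer of the symbol class through Lemma \ref{lem: STFT con J}$(ii)$, boundedness of $\mathrm{Op}_{1-\tau}(\sigma_J)$ from $M^{p,q}_{m}$ to $M^{p,q}_{m\circ\mathcal{U}_{\tau}}$ obtained from Theorem \ref{thm:bounded optau} applied with $1-\tau$ in place of $\tau$, and the identification \eqref{W-M} together with the tensor factorization of $m\circ\mathcal{U}_{\tau}$ coming from the block-diagonal structure of $\mathcal{U}_{\tau}$. The only difference is that you spell out the weight bookkeeping ($v\circ\mathcal{B}_{\tau}=(v\circ\mathcal{U}_{\tau})\circ\mathcal{B}_{1-\tau}$, admissibility of $v\circ\mathcal{U}_{\tau}$, and the role of $\mathcal{U}_{1-\tau}^{-1}=\mathcal{U}_{\tau}$), which the paper leaves implicit in its one-line appeal to Theorem \ref{thm:bounded optau}.
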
 

\begin{proof}
	Consider the following commutative diagram:
	\[
	\xymatrix{M_{m}^{p,q}\left(\mathbb{R}^{d}\right)\ar[r]^{\mathrm{Op}_{1-\tau}(\sigma_{J})} & M_{m\circ\mathcal{U}_{\tau}}^{p,q}\left(\mathbb{R}^{d}\right)\ar[d]^{\mathcal{F}}\\
		 W\left(\mathcal{F}L_{m_{1}}^{p},L_{m_{2}}^{q}\right)\left(\mathbb{R}^{d}\right)\ar[r]^(.395){\mathrm{Op}_{\tau}(\sigma)}\ar[r]\ar[u]_{\mathcal{F}^{-1}} &   W\left( \mathcal{F}L_{m_{1}\circ\left(\mathcal{U}_{\tau}\right)_{1}}^{p},L_{m_{2}\circ (\mathcal{U}_{\tau})_{2}}^{q}\right)(\mathbb{R}^{d})}
	\]
	Indeed, since $\sigma\in W\left(\mathcal{F}L^{\infty},L_{v\circ\mathcal{B}_{\tau}\circ J^{-1}}^{1}\right)\left(\mathbb{R}^{2d}\right)$,
	$\sigma_{J}\in W\left(\mathcal{F}L^{\infty},L_{v\circ\mathcal{B}_{\tau}}^{1}\right)\left(\mathbb{R}^{2d}\right)$
	by Lemma \ref{lem: STFT con J}. The operator $\mathrm{Op}_{1-\tau}\sigma_{J}$
	is bounded by virtue of Theorem \ref{thm:bounded optau} with $\tau'=1-\tau\in\left(0,1\right)$
	and the thesis follows at once thanks to the Lemma \ref{lem:symplectic covariance tau}.
\end{proof}

\subsection*{The cases $\tau=0$ and $\tau=1$}
Theorem \ref{thm:almdiag weak}  allows us to obtain  some boundedness results for  $\tau$-pseudodifferential operators with $\tau=0$ or $\tau=1$, having symbols in Wiener amalgam spaces.
	\begin{proposition}\label{Prop5.8}
		Assume $\sigma \in W(\cF L^\infty, L^1)(\rd)$.  Then the  Kohn-Nirenberg operator $\mathrm{Op_{KN}}(\sigma) \,\, (\tau = 0)$ is bounded on $M^{1,\infty}(\rd)$.
	\end{proposition}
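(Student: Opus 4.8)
The plan is to specialize the weak almost diagonalization of Theorem~\ref{thm:almdiag weak} to $\tau=0$ with the trivial weight $v\equiv1$, and then to run the STFT--conjugation argument already used in the proof of Theorem~\ref{thm:bounded optau}. For $\tau=0$ the map $\mathcal{T}_0$ in \eqref{Ttau} degenerates to $\mathcal{T}_0(w,z)=(w_1,z_2)$, so, fixing a nonzero window and applying Theorem~\ref{thm:almdiag weak}, we obtain a function $H_0\in L^1(\rdd)$ with
\[
\left|\left\langle \mathrm{Op}_0(\sigma)\pi(z)g,\pi(w)g\right\rangle\right|\le H_0(w_1,z_2),\qquad \forall\, w=(w_1,w_2),\ z=(z_1,z_2)\in\rdd.
\]
The decisive feature is that $H_0$ couples only the space component $w_1$ of the output with the frequency component $z_2$ of the input: this is exactly the pairing governing the $M^{1,\infty}$ norm, and it is what singles out the target space $M^{1,\infty}$ in the degenerate case $\tau=0$.

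First I would fix the Gaussian $g(t)=e^{-\pi t^2}\in\cS(\rd)$, normalized by $\|g\|_2=1$, and use it both as the window in the estimate above and in the STFT. By \eqref{treduetre} we have $V_g^\ast V_g=\mathrm{Id}$, hence $T=V_g^\ast\,(V_gTV_g^\ast)\,V_g$ for $T=\mathrm{Op}_0(\sigma)$. Since $V_g\colon M^{1,\infty}(\rd)\to L^{1,\infty}(\rdd)$ and $V_g^\ast\colon L^{1,\infty}(\rdd)\to M^{1,\infty}(\rd)$ are bounded, it suffices to prove that the integral operator $V_gTV_g^\ast$, whose kernel is $K_T(w,z)=\langle T\pi(z)g,\pi(w)g\rangle$, is bounded on $L^{1,\infty}(\rdd)$. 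Recalling that in the paper's convention $\|F\|_{L^{1,\infty}}=\sup_{\omega}\int|F(x,\omega)|\,dx$, for $F\in L^{1,\infty}(\rdd)$ the kernel bound gives
\[
\int_{\rd}\left|V_gTV_g^\ast F(w_1,w_2)\right|\,dw_1\le\int_{\rd}\int_{\rd}\int_{\rd}H_0(w_1,z_2)\,|F(z_1,z_2)|\,dz_1\,dz_2\,dw_1.
\]

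By Tonelli the right-hand side equals $\int_{\rd}\big(\int_{\rd}H_0(w_1,z_2)\,dw_1\big)\big(\int_{\rd}|F(z_1,z_2)|\,dz_1\big)\,dz_2$, and since $\int_{\rd}|F(z_1,z_2)|\,dz_1\le\|F\|_{L^{1,\infty}}$ uniformly in $z_2$, it is dominated by $\|H_0\|_{L^1}\,\|F\|_{L^{1,\infty}}$. Crucially this bound does not depend on $w_2$, so taking the supremum over $w_2$ (the outer $L^\infty$ in the target norm) is free and yields $\|V_gTV_g^\ast F\|_{L^{1,\infty}}\le\|H_0\|_{L^1}\,\|F\|_{L^{1,\infty}}$. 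Composing with the mapping properties of $V_g$ and $V_g^\ast$ then gives the boundedness of $\mathrm{Op}_0(\sigma)=\mathrm{Op_{KN}}(\sigma)$ on $M^{1,\infty}(\rd)$.

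I do not expect a genuine analytic obstacle: this is a Schur/Young-type estimate and all integrability is supplied by $H_0\in L^1$ together with Tonelli. The one point demanding care---and the conceptual core of the proposition---is the exact compatibility between the degenerate coupling $(w_1,z_2)$ imposed by $\tau=0$ and the mixed-norm structure of $M^{1,\infty}$: the output frequency variable $w_2$, along which $H_0$ is constant, and the input space variable $z_1$, in which $F$ is simply integrated to produce its $L^{1,\infty}$ norm, are the two \emph{free} directions carrying no convolution, which is precisely why the endpoint $\tau=0$ yields $M^{1,\infty}$ rather than the full scale $M^{p,q}$ obtained for $\tau\in(0,1)$.
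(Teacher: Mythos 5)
Your proposal is correct and follows essentially the same route as the paper's proof: specialize Theorem~\ref{thm:almdiag weak} to $\tau=0$ so the kernel of $V_g\,\mathrm{Op}_0(\sigma)\,V_g^\ast$ is dominated by $H_0(\mathcal{T}_0(w,z))=H_0(w_1,z_2)$ with $H_0\in L^1(\rdd)$, prove the dominating integral operator is bounded on $L^{1,\infty}(\rdd)$, and conjugate back with $V_g$, $V_g^\ast$ to conclude on $M^{1,\infty}(\rd)$. The only difference is cosmetic: where the paper calls the $L^{1,\infty}$ boundedness of $T_{H_0}$ ``immediate,'' you spell it out via Tonelli, correctly identifying that $H_0$ couples only $w_1$ with $z_2$ so that $w_2$ and $z_1$ are the free directions.
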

	\begin{proof}
	Consider $H_0\left(\mathcal{T}_{0}\left(w,z\right)\right)$
with $H_0\in L^{1}\left(\mathbb{R}^{2d}\right)$ as in Theorem \ref{thm:almdiag weak}.
	The integral operator $T_{H_0}$ with kernel $H_0\left(\mathcal{T}_{0}\left(w,z\right)\right)=H_0\left(w_{1},z_{2}\right)$
	can be written as
	\[
	T_{H_0}F\left(w\right)=\int_{\mathbb{R}^{2d}}H_0\circ \mathcal{T}_{0}\left(w,z\right)F\left(z\right)d z=\int_{\mathbb{R}^{d}}\int_{\mathbb{R}^{d}}H\left(w_{1},z_{2}\right)F\left(z_{1},z_{2}\right)d z_{1}d z_{2}.
	\]
	It is immediate to notice that $T_{H_0}\,:\,L^{1,\infty}\left(\mathbb{R}^{2d}\right)\rightarrow L^{1,\infty}\left(\mathbb{R}^{2d}\right)$
	is a bounded operator. Then, for a fixed non-zero window $g\in\mathcal{S}\left(\mathbb{R}^{d}\right)$,
	we have that 
	\[
	T=V_{g}^{*}T_{H_0}V_{g}\,:\,M^{1,\infty}\left(\mathbb{R}^{d}\right)\rightarrow M^{1,\infty}(\rd)
	\]
	is a bounded operator. The claim then follows. 
\end{proof}
\begin{proposition} \label{Prop5.9}
		Assume $\sigma \in W(\cF L^\infty, L^1)(\rd)$. Then the operator \textquotedblleft with right symbol\textquotedblright \, $\mathrm{Op}_{1}(\sigma) \,\, (\tau = 1)$ is bounded on $W(\cF L^1, L^\infty)(\rd)$. 
\end{proposition}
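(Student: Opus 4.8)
The plan is to mirror the proof of Proposition \ref{Prop5.8}, now applying Theorem \ref{thm:almdiag weak} with $\tau=1$. Fix any non-zero window $\varphi\in\cS(\rd)$ (we may take it with $\|\varphi\|_2=1$). Since $\mathcal{T}_1(w,z)=(z_1,w_2)$, the weak almost diagonalization furnishes a function $H_1\in L^1(\rdd)$ with $|\langle \mathrm{Op}_1(\sigma)\pi(z)\varphi,\pi(w)\varphi\rangle|\le H_1(z_1,w_2)$ for all $w,z\in\rdd$. The crucial point is that, whereas the Kohn--Nirenberg kernel $H_0(w_1,z_2)$ of Proposition \ref{Prop5.8} is adapted to the mixed-norm space underlying $M^{1,\infty}$ (the norm $\sup_{z_2}\int_{z_1}$), the kernel $H_1(z_1,w_2)$ is adapted to the \emph{dual} arrangement $\sup_{z_1}\int_{z_2}$. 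This is exactly the mixed norm characterizing $W(\cF L^1,L^\infty)$: from the definition one has $\|f\|_{W(\cF L^1,L^\infty)}=\sup_{x\in\rd}\int_{\rd}|V_\varphi f(x,\omega)|\,d\omega$, so $V_\varphi$ embeds $W(\cF L^1,L^\infty)$ into the space $\mathcal L$ of functions $F$ on $\rdd$ with $\|F\|_{\mathcal L}:=\sup_{z_1}\int_{\rd}|F(z_1,z_2)|\,dz_2<\infty$, while $V_\varphi^\ast$ maps $\mathcal L$ back into $W(\cF L^1,L^\infty)$ boundedly.

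Next I would verify that the integral operator $T_{H_1}F(w)=\int_{\rdd}H_1(z_1,w_2)F(z)\,dz$ is bounded on $\mathcal L$. Because the kernel is independent of $w_1$, so is $T_{H_1}F$, and the outer supremum over $w_1$ is vacuous; writing $G(z_1)=\int_{\rd}F(z_1,z_2)\,dz_2$, so that $|G(z_1)|\le\|F\|_{\mathcal L}$ for every $z_1$, a single use of Tonelli gives
\[
\|T_{H_1}F\|_{\mathcal L}=\int_{\rd}\Big|\int_{\rd}H_1(z_1,w_2)\,G(z_1)\,dz_1\Big|\,dw_2\le\|F\|_{\mathcal L}\,\|H_1\|_{L^1(\rdd)}.
\]
This is the precise analogue of the $L^{1,\infty}$-boundedness of $T_{H_0}$ used in Proposition \ref{Prop5.8}, with the roles of the two $\rd$-variables interchanged. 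To conclude, the inversion formula \eqref{treduetre} lets one write $\mathrm{Op}_1(\sigma)=V_\varphi^\ast V_\varphi\,\mathrm{Op}_1(\sigma)\,V_\varphi^\ast V_\varphi$, and the kernel of $V_\varphi\,\mathrm{Op}_1(\sigma)\,V_\varphi^\ast$ with respect to time-frequency shifts equals $\langle\mathrm{Op}_1(\sigma)\pi(z)\varphi,\pi(w)\varphi\rangle$, dominated pointwise by $H_1(z_1,w_2)$. Hence $|V_\varphi\,\mathrm{Op}_1(\sigma)f|\le T_{H_1}(|V_\varphi f|)$, and composing the three mapping properties yields $\|\mathrm{Op}_1(\sigma)f\|_{W(\cF L^1,L^\infty)}\lesssim\|H_1\|_{L^1}\,\|f\|_{W(\cF L^1,L^\infty)}$, i.e. $T=V_\varphi^\ast T_{H_1}V_\varphi$ is bounded on $W(\cF L^1,L^\infty)$.

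I expect the main obstacle to be conceptual rather than computational: correctly recognizing that the target space for $\tau=1$ must be $W(\cF L^1,L^\infty)$ rather than $M^{1,\infty}$, i.e. that the kernel shape $H_1(z_1,w_2)$ forces the supremum onto the position variable and the $L^1$-integration onto the frequency variable. As a cross-check, the whole statement can alternatively be obtained from Proposition \ref{Prop5.8} through the symplectic covariance $\cF\,\mathrm{Op}_\tau(\sigma)\,\cF^{-1}=\mathrm{Op}_{1-\tau}(\sigma\circ J^{-1})$ of Lemma \ref{lem:symplectic covariance tau} taken at $\tau=1$, combined with the identity $\cF M^{1,\infty}=W(\cF L^1,L^\infty)$ from \eqref{W-M} (so that $\cF^{\mp1}$ intertwines the two spaces) and the invariance of $W(\cF L^\infty,L^1)$ under $\sigma\mapsto\sigma\circ J^{-1}$; verifying this last invariance via Lemma \ref{lem: STFT con J} is the only step of that alternative route requiring some care.
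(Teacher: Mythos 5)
Your proof is correct and takes essentially the same route as the paper's: you invoke Theorem \ref{thm:almdiag weak} at $\tau=1$ to obtain the kernel bound $H_1(\mathcal{T}_1(w,z))=H_1(z_1,w_2)$, verify that the associated integral operator $T_{H_1}$ is bounded on the mixed-norm space $L^\infty_{z_1}(L^1_{z_2})$ (a step the paper declares ``immediate'' and you usefully make explicit via Tonelli), and then conjugate by $V_\varphi$ and $V_\varphi^\ast$ to transfer boundedness to $W(\cF L^1,L^\infty)(\rd)$, exactly as in the paper's $T=V_g^\ast T_{H_1}V_g$ argument. Your closing symplectic-covariance cross-check via Lemma \ref{lem:symplectic covariance tau} and \eqref{W-M} is not in the paper's proof of this proposition, but it is a valid alternative consistent with the commutative-diagram technique the paper uses elsewhere.
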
		
\begin{proof}
		Again, we apply Theorem \ref{thm:almdiag weak} and  consider $H_1\left(\mathcal{T}_{1}\left(w,z\right)\right)$ with $H_1\in L^{1}\left(\mathbb{R}^{2d}\right)$.
	The integral operator $T_{H}$ with kernel $H_1\left(\mathcal{T}_{1}\left(w,z\right)\right)=H_1\left(z_{1},w_{2}\right)$
	can be written as
	\[
	T_{H_1}F\left(w\right)=\int_{\mathbb{R}^{2d}}H_1\circ \mathcal{T}_{1}\left(w,z\right)F\left(z\right)d z=\int_{\mathbb{R}^{d}}\int_{\mathbb{R}^{d}}H_1\left(z_{1},w_{2}\right)F\left(z_{1},z_{2}\right)d z_{1}d z_{2}.
	\]
	It is immediate to notice that $T_{H_1}\,:\,L_{z_{1}}^{\infty}\left(L_{z_{2}}^{1}\right)\left(\mathbb{R}^{2d}\right)\rightarrow L_{w_{1}}^{\infty}\left(L_{w_{2}}^{1}\right)\left(\mathbb{R}^{2d}\right)$
	is a bounded operator. Then, for a fixed non-zero window $g\in\mathcal{S}\left(\mathbb{R}^{d}\right)$,
	we have that 
	\[
	T=V_{g}^{*}T_{H_1}V_{g}\,:\,W\left(\mathcal{F}L^{1},L^{\infty}\right)\left(\mathbb{R}^{d}\right)\rightarrow W\left(\mathcal{F}L^{1},L^{\infty}\right)\left(\mathbb{R}^{d}\right)
	\]
	is a bounded operator. This concludes the proof. 
\end{proof}

The consequences of the almost diagonalization of $\tau$-operators are manifold. We notice that the  results of this section can be extended by interpolation to symbols in $W(\cF L^p, L^q)(\rdd)$, following the pattern of \cite{CNIMRN2018}. This subject with be further investigated in a subsequent paper.

\section{Algebra and Wiener properties}
The connection with the theory Fourier integral operators established
in the previous section allows to investigate further properties of
$\tau$-operators. First of all, notice that for
any $\tau_{1},\tau_{2}\in\left(0,1\right)$,
\[
\mathcal{U}_{\tau_{1}}\mathcal{U}_{\tau_{2}}=\left(\begin{array}{cc}
\frac{\tau_{1}\tau_{2}}{\left(1-\tau_{1}\right)\left(1-\tau_{2}\right)}I_{d\times d} & 0_{d\times d}\\
0_{d\times d} & \frac{\left(1-\tau_{1}\right)\left(1-\tau_{2}\right)}{\tau_{1}\tau_{2}}I_{d\times d}
\end{array}\right).
\]
In particular, 
\[
\mathcal{U}_{\tau}\mathcal{U}_{1-\tau}=\mathcal{U}_{1-\tau}\mathcal{U}_{\tau}=I_{2d\times2d}.
\]

Therefore, composition properties of operators in the class $FIO\left(\mathcal{A},v_{s}\right)$
(see \cite[Theorems 3.4]{generalizedmetaplectic} and  Theorem \ref{teor41}) yield the following result. 
\begin{theorem}[Algebra property]
	For any $a,b\in W\left(\mathcal{F}L^{\infty},L_{v_{s}}^{1}\right)$
	and $\tau\in\left(0,1\right)$, there exists a symbol $c\in M_{1\otimes v_{s}}^{\infty,1}$
	such that
	\[
\mathrm{Op}_{\tau}\left(a\right)\mathrm{Op}_{1-\tau}\left(b\right)=\mathrm{Op}_{1/2}\left(c\right).
	\]
\end{theorem}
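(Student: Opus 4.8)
The plan is to realize each of the two factors as a Fourier integral operator in the class $FIO(\mathcal{A},v_s)$ of Definition [the FIO class], to multiply them using the composition calculus of \cite{generalizedmetaplectic}, and then to recognize the product—whose underlying symplectic map turns out to be the identity—as a Weyl operator with a Sjöstrand-class symbol via Theorem \ref{teor41}. The whole argument is thus a chain of three reductions, and essentially no hands-on computation with $W_{\tau}$ or $V_{\Phi_\tau}$ is needed beyond what has already been established.

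First I would turn each factor into an FIO. Since $v_s$ is a radial polynomial weight and $\mathcal{B}_\tau$ is bi-Lipschitz for fixed $\tau\in(0,1)$, one has $v_s\circ\mathcal{B}_\tau\asymp v_s$, so $a\in W(\mathcal{F}L^\infty,L^1_{v_s})=W(\mathcal{F}L^\infty,L^1_{v_s\circ\mathcal{B}_\tau})$. Theorem \ref{thm:almost diag} then furnishes a dominating function $H_\tau\in L^1_{v_s}$ with $|\langle \mathrm{Op}_{\tau}(a)\pi(z)\varphi,\pi(w)\varphi\rangle|\le H_\tau(w-\mathcal{U}_\tau z)$; that is, $\mathrm{Op}_{\tau}(a)\in FIO(\mathcal{U}_\tau,v_s)$, exactly as recorded in the discussion following Theorem \ref{rappresentazione}. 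The identical reasoning applied to $b$ with parameter $1-\tau$ gives $\mathrm{Op}_{1-\tau}(b)\in FIO(\mathcal{U}_{1-\tau},v_s)$. I would note here that the upper-left block of each $\mathcal{U}_{\tau}$ is the invertible matrix $-\tfrac{\tau}{1-\tau}I_{d\times d}$, which is what licenses the type I representation in Theorem \ref{rappresentazione}.

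Next I would invoke the composition theorem \cite[Theorem 3.4]{generalizedmetaplectic}, to the effect that the product of an operator in $FIO(\mathcal{A}_1,v_s)$ with one in $FIO(\mathcal{A}_2,v_s)$ lies in $FIO(\mathcal{A}_1\mathcal{A}_2,v_s)$. Applying it with $\mathcal{A}_1=\mathcal{U}_\tau$, $\mathcal{A}_2=\mathcal{U}_{1-\tau}$ and using the identity $\mathcal{U}_\tau\mathcal{U}_{1-\tau}=I_{2d\times 2d}$ computed just above the statement, I obtain $T:=\mathrm{Op}_{\tau}(a)\mathrm{Op}_{1-\tau}(b)\in FIO(I,v_s)$, i.e.\ there is $H\in L^1_{v_s}$ with $|\langle T\pi(z)\varphi,\pi(w)\varphi\rangle|\le H(w-z)$ for all $w,z\in\rdd$. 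Finally, since $T$ is continuous from $\mathcal{S}(\rd)$ to $\mathcal{S}'(\rd)$, Proposition \ref{optker} (equivalently Theorem \ref{optreps}) lets me write $T=\mathrm{Op}_{1/2}(c)$ for a unique $c\in\mathcal{S}'(\rdd)$, and the estimate just obtained is precisely condition $(ii)$ of Theorem \ref{teor41} for $\mathrm{Op}_{1/2}(c)$ with weight $v=v_s$. Hence $(i)$ gives $c\in M^{\infty,1}_{1\otimes v_s\circ J^{-1}}$; and because $J$ is orthogonal ($J^\top J=-J^2=I_{2d\times 2d}$) the radial weight satisfies $v_s\circ J^{-1}=v_s$, so in fact $c\in M^{\infty,1}_{1\otimes v_s}$, as claimed. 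Packaged through Corollary \ref{corteor41} this last step is immediate.

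I expect the only genuine obstacle to be verifying that the hypotheses of the composition theorem of \cite{generalizedmetaplectic} are actually met here—chiefly the GRS condition for $v_s$ (which holds since $\langle nz\rangle^{s/n}\to 1$) and the invertibility of the upper-left blocks of $\mathcal{U}_\tau$ and $\mathcal{U}_{1-\tau}$, which together secure both the type I representations and the clean composition of canonical maps. Everything else is bookkeeping: substituting the explicit diagonal matrices $\mathcal{U}_\tau,\mathcal{B}_\tau$ and tracking the weights through the equivalences $v_s\circ\mathcal{B}_\tau\asymp v_s$ and $v_s\circ J^{-1}=v_s$. One should also double-check that the well-definedness of the product $\mathrm{Op}_{\tau}(a)\mathrm{Op}_{1-\tau}(b)$ is guaranteed at the level of modulation spaces (Theorem \ref{thm:bounded optau}), so that the FIO composition is being applied to bona fide bounded operators.
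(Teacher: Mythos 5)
Your argument is correct and takes essentially the same route as the paper's own (very terse) proof: the paper likewise places $\mathrm{Op}_{\tau}(a)$ in $FIO(\mathcal{U}_{\tau},v_{s})$ and $\mathrm{Op}_{1-\tau}(b)$ in $FIO(\mathcal{U}_{1-\tau},v_{s})$ via Theorem \ref{thm:almost diag} together with $v_{s}\circ\mathcal{B}_{\tau}\asymp v_{s}$, composes using \cite[Theorem 3.4]{generalizedmetaplectic} and the identity $\mathcal{U}_{\tau}\mathcal{U}_{1-\tau}=I_{2d\times 2d}$, and concludes with the characterization of Theorem \ref{teor41}, using $v_{s}\circ J^{-1}=v_{s}$. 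Your additional verifications (GRS condition, well-definedness of the composition on modulation spaces) are sound, with the minor remark that the invertibility of the upper-left blocks of $\mathcal{U}_{\tau}$ is needed only for the type I representation of Theorem \ref{rappresentazione}, not for the composition step itself.
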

\begin{remark}
	On the other hand, for any choice of $\tau_{1},\tau_{2}\in\left(0,1\right)$,
	there is no $\tau\in\left(0,1\right)$ such that $\mathcal{U}_{\tau_{1}}\mathcal{U}_{\tau_{2}}=\mathcal{U}_{\tau}$.
	This immediately implies that there is no $\tau$-quantization rule
	such that composition of $\tau$-operators with symbols in $W\left(\mathcal{F}L^{\infty},L_{v_{s}}^{1}\right)$
	has symbol in the same class. In a similar fashion, given $a\in W\left(\mathcal{F}L^{\infty},L_{v_{s}}^{1}\right)$,
	$b\in M_{1\otimes v_{s}}^{\infty,1}$ and $\tau,\tau_{0}\in\left(0,1\right)$,
	we have 
	\[
	\mathrm{Op}_{\tau_{0}}\left(b\right)\mathrm{Op}_{\tau}\left(a\right)=\mathrm{Op}_{\tau}\left(c_{1}\right),\qquad\mathrm{Op}_{\tau}\left(a\right)\mathrm{Op}_{\tau_{0}}\left(b\right)=\mathrm{Op}_{\tau}\left(c_{2}\right),
	\]
	for some $c_{1},c_{2}\in W\left(\mathcal{F}L^{\infty},L_{v_{s}}^{1}\right)$.
	This means that, for fixed quantization rules $\tau,\tau_{0}$, the
	amalgam space $W\left(\mathcal{F}L^{\infty},L_{v_{s}}^{1}\right)$
	is a bimodule over the algebra $M_{1\otimes v_{s}}^{\infty,1}$ under
	the laws 
	\[
	M_{1\otimes v_{s}}^{\infty,1}\times W\left(\mathcal{F}L^{\infty},L_{v_{s}}^{1}\right)\rightarrow W\left(\mathcal{F}L^{\infty},L_{v_{s}}^{1}\right)\,\,:\,\,\left(b,a\right)\mapsto c_{1},
	\]
	\[
	W\left(\mathcal{F}L^{\infty},L_{v_{s}}^{1}\right)\times M_{1\otimes v_{s}}^{\infty,1}\rightarrow W\left(\mathcal{F}L^{\infty},L_{v_{s}}^{1}\right)\,\,:\,\,\left(a,b\right)\mapsto c_{2},
	\]
	with $c_{1}$ and $c_{2}$ as before. 
\end{remark}
In conclusion, we provide a result whose proof easily follows by \cite[Theorem 3.7]{generalizedmetaplectic} and by noticing that $\mathcal{U}_{\tau}^{-1}=\mathcal{U}_{1-\tau}$
for any $\tau\in\left(0,1\right)$.
\begin{theorem}[Wiener property]
	For any $\tau\in\left(0,1\right)$ and $a\in W\left(\mathcal{F}L^{\infty},L_{v_{s}}^{1}\right)$
	such that $\mathrm{Op}_{\tau}\left(a\right)$ is invertible on $L^{2}\left(\mathbb{R}^{d}\right)$,
	we have 
	\[
	\mathrm{Op}_{\tau}\left(a\right)^{-1}=\mathrm{Op}_{1-\tau}\left(b\right)
	\]
	for some $b\in W\left(\mathcal{F}L^{\infty},L_{v_{s}}^{1}\right)$.
\end{theorem}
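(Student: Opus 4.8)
The plan is to route the argument through the Fourier integral operator framework and exploit the spectral invariance (Wiener property) of the classes $FIO(\cA,v_s)$. I begin by recording the algebraic facts already isolated in this section: $\cU_\tau\cU_{1-\tau}=\cU_{1-\tau}\cU_\tau=I_{2d\times2d}$, so that $\cU_\tau^{-1}=\cU_{1-\tau}$, together with the weight equivalence $v_s\circ\cB_\tau\asymp v_s$, valid for every $\tau\in(0,1)$. Since $a\in W(\cF L^\infty,L^1_{v_s})$, Theorem \ref{thm:almost diag} (the implication $(i)\Rightarrow(ii)$, applied with the admissible weight $v_s$ and using $v_s\circ\cB_\tau\asymp v_s$) provides a function $H_\tau\in L^1_{v_s}$ with
\[
\left|\left\langle \mathrm{Op}_{\tau}(a)\pi(z)\varphi,\pi(w)\varphi\right\rangle\right|\le H_\tau(w-\cU_\tau z),\qquad\forall\, w,z\in\rdd .
\]
This is precisely the defining decay condition \eqref{asterisco} of the class $FIO(\cU_\tau,v_s)$, so $\mathrm{Op}_\tau(a)\in FIO(\cU_\tau,v_s)$, exactly as already remarked in the discussion following Theorem \ref{rappresentazione}.

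Next I would invoke the Wiener property for generalized metaplectic operators, \cite[Theorem 3.7]{generalizedmetaplectic}, which asserts that if $T\in FIO(\cA,v_s)$ is invertible on $L^2(\rd)$ then its inverse lies in $FIO(\cA^{-1},v_s)$. Applying this with $T=\mathrm{Op}_\tau(a)$ and $\cA=\cU_\tau$, and using the identity $\cU_\tau^{-1}=\cU_{1-\tau}$, yields
\[
\mathrm{Op}_\tau(a)^{-1}\in FIO(\cU_{1-\tau},v_s).
\]

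It remains to translate this FIO membership back into a statement about a $(1-\tau)$-symbol. The inverse $\mathrm{Op}_\tau(a)^{-1}$ is a continuous map $\cS(\rd)\to\cS'(\rd)$, being itself a generalized metaplectic operator (hence bounded on every $M^p_{v_s}$), so Theorem \ref{optreps} represents it as $\mathrm{Op}_{1-\tau}(b)$ for a unique $b\in\cS'(\rdd)$. Membership in $FIO(\cU_{1-\tau},v_s)$ furnishes $H\in L^1_{v_s}$ with
\[
\left|\left\langle \mathrm{Op}_{1-\tau}(b)\pi(z)\varphi,\pi(w)\varphi\right\rangle\right|\le H(w-\cU_{1-\tau}z),\qquad\forall\, w,z\in\rdd .
\]
Applying Theorem \ref{thm:almost diag} with $1-\tau$ in place of $\tau$ (now the implication $(ii)\Rightarrow(i)$) gives $b\in W(\cF L^\infty,L^1_{v_s\circ\cB_{1-\tau}})$, and the equivalence $v_s\circ\cB_{1-\tau}\asymp v_s$ upgrades this to $b\in W(\cF L^\infty,L^1_{v_s})$, which is the claim.

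The genuine difficulty is entirely packaged in \cite[Theorem 3.7]{generalizedmetaplectic}: proving that the $L^2$-inverse of an operator in $FIO(\cA,v_s)$ remains in $FIO(\cA^{-1},v_s)$ is a real spectral-invariance result, which crucially uses the GRS condition imposed on the weight (and, in the background, the algebra and almost-diagonalization machinery for these classes). Everything else is bookkeeping — the identity $\cU_\tau^{-1}=\cU_{1-\tau}$, the weight equivalence $v_s\circ\cB_{1-\tau}\asymp v_s$, and the dictionary between FIO membership and the $\tau$-symbol classes supplied by Theorem \ref{thm:almost diag} — which is why the proof follows \emph{easily} once the cited result is in hand.
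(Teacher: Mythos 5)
Your proof is correct and takes essentially the same route as the paper: the paper's own proof is a one-line remark that the result follows from \cite[Theorem 3.7]{generalizedmetaplectic} together with the identity $\mathcal{U}_{\tau}^{-1}=\mathcal{U}_{1-\tau}$, which is exactly your argument. Your writeup merely makes explicit the bookkeeping the paper leaves implicit---the membership $\mathrm{Op}_{\tau}(a)\in FIO(\mathcal{U}_{\tau},v_{s})$ via Theorem \ref{thm:almost diag} and $v_{s}\circ\mathcal{B}_{\tau}\asymp v_{s}$, the representation of the $L^{2}$-inverse as a $(1-\tau)$-operator via Theorem \ref{optreps}, and the converse implication of Theorem \ref{thm:almost diag} with $1-\tau$ in place of $\tau$---and it contains no gaps.
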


\section{Acknowledgments}  The first and second authors were partially supported by  the Gruppo Nazionale per l'Analisi Matematica, la Probabilit\`a e le loro Applicazioni (GNAMPA) of the Istituto Nazionale di Alta Matematica (INdAM). The authors wish to thank the referees for their suggestions, which improved the readability of the manuscript.

\bibliographystyle{amsalpha}

\providecommand{\bysame}{\leavevmode\hbox to3em{\hrulefill}\thinspace}
\providecommand{\MR}{\relax\ifhmode\unskip\space\fi MR }
\providecommand{\MRhref}[2]{%
	\href{http://www.ams.org/mathscinet-getitem?mr=#1}{#2}
}
\providecommand{\href}[2]{#2}

\end{document}